\numberwithin{equation}{section}
\newtheorem{definition}{Definition}[section]
\newtheorem{proposition}{Proposition}[section]
\newtheorem{remark}{Remark}[section]
\newtheorem{theorem}{Theorem}[section]
\newtheorem{corollary}{Corollary}[section]
\newtheorem{lemma}{{Lemma}}[section]
\newcommand{\eps}{\varepsilon}
\newcommand{\Ome}{{\Omega}}
\newcommand{\p}{{\partial}}
\newcommand{\R}{\mathbb{R}}
\newcommand{\lDa}{\tensor[^{-}]{\mathcal{D}}{^{\alpha}}}
\newcommand{\rDa}{\tensor[^{+}]{\mathcal{D}}{^{\alpha}}}
\newcommand{\zDa}{\tensor[^{z}]{\mathcal{D}}{^{\alpha}}}
\newcommand{\lrDa}{\tensor[^{\pm}]{\mathcal{D}}{^{\alpha}}}
\newcommand{\rlDa}{\tensor[^{\mp}]{\mathcal{D}}{^{\alpha}}}
\newcommand{\lDb}{\tensor[^{-}]{\mathcal{D}}{^{\beta}}}
\newcommand{\lrDb}{\tensor[^{\pm}]{\mathcal{D}}{^{\beta}}}
\newcommand{\rlDb}{\tensor[^{\mp}]{\mathcal{D}}{^{\beta}}}
\newcommand{\lNa}{\tensor[^{-}]{\mathcal{N}}{^{\alpha}}}
\newcommand{\rNa}{\tensor[^{+}]{\mathcal{N}}{^{\alpha}}}
\newcommand{\lrNa}{\tensor[^{\pm}]{\mathcal{N}}{^{\alpha}}}
\newcommand{\lIa}{\tensor[^{-}]{I}{^{\alpha}}}
\newcommand{\rIa}{\tensor[^{+}]{I}{^{\alpha}}}
\newcommand{\lrIa}{\tensor[^{\pm}]{I}{^{\alpha}}}
\newcommand{\lIoa}{\tensor[^{-}]{I}{^{1-\alpha}}}
\newcommand{\rIoa}{\tensor[^{+}]{I}{^{1-\alpha}}}
\newcommand{\lrIoa}{\tensor[^{\pm}]{I}{^{1-\alpha}}}
\newcommand{\rlIoa}{\tensor[^{\mp}]{I}{^{1-\alpha}}}
\newcommand{\lIob}{\tensor[^{-}]{I}{^{1-\beta}}}
\newcommand{\lLa}{\tensor[^{-}]{\Delta}{^{\alpha}}}
\newcommand{\rLa}{\tensor[^{+}]{\Delta}{^{\alpha}}}
\newcommand{\lrLa}{\tensor[^{\pm}]{\Delta}{^{\alpha}}}
\newcommand{\zLa}{\tensor[^{z}]{\Delta}{^{\alpha}}}
\newcommand{\lWap}{\tensor[^{-}]{W}{^{\alpha,p}}}
\newcommand{\rWap}{\tensor[^{+}]{W}{^{\alpha,p}}}
\newcommand{\lrWap}{\tensor[^{\pm}]{W}{^{\alpha,p}}}
\newcommand{\zWap}{\tensor[^{z}]{W}{^{\alpha,p}}}
\newcommand{\lcWap}{\tensor[^{-}]{\mathring{W}}{^{\alpha,p}}}
\newcommand{\rcWap}{\tensor[^{+}]{\mathring{W}}{^{\alpha,p}}}
\newcommand{\lrcWap}{\tensor[^{\pm}]{\mathring{W}}{^{\alpha,p}}}
\newcommand{\lHa}{\tensor[^{-}]{H}{^{\alpha}}}
\newcommand{\rHa}{\tensor[^{+}]{H}{^{\alpha}}}
\newcommand{\lrHa}{\tensor[^{\pm}]{H}{^{\alpha}}}
\newcommand{\lrcHa}{\tensor[^{\pm}]{\mathring{H}}{^{\alpha}}}
\newcommand{\lrHta}{\tensor[^{\pm}]{H}{^{2\alpha}}}
\newcommand{\zHa}{\tensor[^{z}]{H}{^{\alpha}}}
\newcommand{\lT}{\tensor[^{-}]{T}{}}
\newcommand{\rT}{\tensor[^{+}]{T}{}}
\newcommand{\lrT}{\tensor[^{\pm}]{T}{}}
\newcommand{\rlT}{\tensor[^{\mp}]{T}{}}
\begin{document}

\title[Fractional Calculus of Variations]{On A New Class of Fractional Calculus of Variations and Related Fractional Differential Equations
}

\author{Xiaobing Feng\dag}
\thanks{\dag Department of Mathematics, The University of Tennessee, Knoxville, TN 37996, U.S.A. (xfeng@math.utk.edu). The work of this author was partially supported by the NSF grants: DMS-1620168 and DMS-2012414.}

\author{Mitchell Sutton\ddag}
\thanks{\ddag Department of Mathematics, The University of Tennessee, Knoxville, TN 37996, U.S.A. (msutto11@vols.utk.edu). The work of this author was partially supported by the NSF grants DMS-1620168 and DMS-2012414.}


 
\begin{abstract}
	This paper is concerned with analyzing a class of fractional calculus of variations problems
	and their associated Euler-Lagrange (fractional differential) equations. Unlike the existing 
	fractional calculus of variations which is based on the classical notion of fractional 
	derivatives, the fractional calculus of variations considered in this paper is based on 
	a newly developed notion of weak fractional derivatives and their associated fractional order Sobolev spaces. Since fractional 
	derivatives are direction-dependent, using one-sided fractional derivatives and their 
	combinations leads to new types of calculus of variations and fractional differential 
	equations as well as nonstandard Neumann boundary operators. The primary objective of
	this paper is to establish the well-posedness and regularities for a class of fractional calculus of variations problems and their Euler-Lagrange (fractional differential) equations.  
	This is achieved first for one-sided Dirichlet energy functionals which lead to
	one-sided fractional Laplace equations, then for more general energy functionals 
	which give rise to more general fractional differential equations.

\end{abstract}

\keywords{
     Fractional calculus of variations, fractional differential equations, one-sided fractional Laplacian, fractional Neumann boundary operator. 
}

\subjclass[2010]{Primary
    26A33, 
    34K37, 
    35R11, 
    46E35, 
}

\maketitle

\tableofcontents

 

\section{Introduction}\label{sec-1}

Let $V$ be a Banach space of real-valued functions defined on a bounded domain 
$\Omega \subset \mathbb{R}^d (d\geq 1)$ and $E:V\to \mathbb{R}$ be a functional 
defined on $V$.  The calculus of variations
concerns with the following minimization problem: 
\begin{align}\label{eq1.1}
u= \underset{v\in V}{\mbox{\rm argmin}}\,  E(v).
\end{align}
A typical energy functional $E$  has the following integral form: 
\begin{align}\label{eq1.2}
  E(v)=\int_\Omega f(D v, v, x)\, dx,
\end{align}
where $f:\mathbb{R}^d\times \mathbb{R}\times \mathbb{R}^d\to \mathbb{R}$, called the energy 
density function, must depend on the gradient $D v$ (or part of it). The dependence 
of $f$ on higher order derivatives of $v$ is also allowed.
Such calculus of variations problems arise from many scientific and engineering fields
such as differential geometry, physics, mechanics, materials sciences, and image processing,
just to name a few. The calculus of variations has been a well-developed field in mathematics 
(cf. \cite{Giaquinta,Dacorogna,Evans} and the references therein). 

Recent advances in fractional/nonlocal calculus and differential equations 
\cite{Podlubny,Samko,Feng_Sutton1} as well as their applications \cite{Hilfer,Du2019,Karniadakis} 
have motivated the consideration of fractional calculus of variations  \cite{Torres,Torres1}, 
which conceptually amounts to replacing the integer order gradient $D v$ by a fractional order
gradient $D^\alpha v\, (0<\alpha <1)$ in \eqref{eq1.2}, leading to the following 
prototypical fractional calculus of variations problem:
\begin{align}\label{eq1.3}
u= \underset{v\in V^\alpha}{\mbox{\rm argmin}}\,  E^\alpha(v), 
\end{align}
where $V^\alpha$ stands for some fractional order (Banach) space and 
\begin{align}\label{eq1.4}
E^\alpha(v)=\int_\Omega f(D^\alpha v, v, x)\, dx.
\end{align}

Although the above conceptual extension is easy to achieve, there are some fundamental issues 
and difficulties 
which must be addressed and overcome.  The utmost issue is the meaning/choice of the fractional 
gradient/derivative $D^\alpha v$ in \eqref{eq1.4}, because there are multiple 
definitions of fractional derivatives (which may not be equivalent) used in the literature.
We recall that the well-known classical fractional derivative concepts include 
Riemann-Liouville, Caputo, Fourier, and Gr\"unwald-Letnikov fractional order derivatives
(cf. \cite{Samko,Stinga,Feng_Sutton1}). The second main issue, which is also a technical obstruction,
is the compatibility between these classical fractional derivatives $D^\alpha v$  
and the (energy) space $V^\alpha$ in \eqref{eq1.3}. For example,  in the case of Caputo derivative 
$\tensor[^{C}]{D}{^{\alpha}} $,
it requires that $v \in C^1$ (or at least $AC$, which could be relaxed to $H^1$)  to ensure its existence. Consequently, one must have $C^1\subset V^\alpha$ (or $H^1\subset V^\alpha$), which forces one to consider calculus of variations with the following integer-fractional mixed order energy functional (cf. \cite{Torres}):
\begin{align}\label{eq1.5}
J^\alpha(v)=\int_\Omega \varphi(Dv, D^\alpha v, v, x)\, dx
\end{align}
over the stronger space $C^1$ (or $H^1$) and the dependence of $\varphi$ on $Dv$ 
is required.  Finally, another important issue is whether to consider the dependence of 
all one-sided fractional derivatives in the density function $f$ (and $\varphi$) because fractional 
derivatives are often direction-dependent and perhaps in only one direction.

Motivated by the above considerations and issues, in this paper we consider and study fractional order calculus of variations in one spatial dimension given by 
\begin{align}\label{FundamentalMin}
    u = \underset{v \in {_{*}}{\mathcal{W}}^{\alpha,p}_{\theta,\lambda} }{\mbox{argmin}}\, \mathcal{E}^{\alpha}_{p,\theta,\lambda} (v),
\end{align}
where 
\begin{align}\label{eq1.7}
    \mathcal{E}^{\alpha}_{p,\theta,\lambda} (v) : = \int_{\Omega}
     L_{p,\theta,\lambda} \bigl(\lDa v(x) , \rDa v(x) , v(x) , x \bigr) \,dx.
\end{align}
Here $\Omega=(a,b)$, ${_{*}}{\mathcal{W}}^{\alpha, p}_{\theta,\lambda}$ denotes a two-parameter
(i.e. $(\theta, \lambda)$) family of fractional order Sobolev spaces, and $*$ will take value $0$ or 
empty (see Section \ref{sec-2} for the 
details).  We first note that the energy density function $L_{p,\theta,\lambda}$ depends independently on both the left and right fractional derivative $\lDa v$ and $\rDa v$, which allows various combinations of them in the density function. We then note that these two fractional derivatives are not the classical fractional derivatives, instead, they are \textit{weak fractional derivatives} which were introduced and developed recently by the authors in \cite{Feng_Sutton1, Feng_Sutton2} (see Appendix A). They are the natural extensions of the integer order weak derivatives used to define Sobolev spaces $W^{k,p}$ and the foundation of the fractional calculus of variations theory to be presented subsequently as the primary goal of the paper.

The remainder of this paper is organized as follows. Section \ref{sec-2} introduces some space notations and necessary preliminaries to be used in the later sections. The reader is also referred to Appendix A and B for the definitions of weak fractional derivatives and the associated fractional Sobolev spaces and to \cite{Feng_Sutton1,Feng_Sutton2} for their comprehensive analyses. 
Section \ref{sec-3} considers some special density functions $L_{p,\theta,\lambda}$ which give rise  the one-sided fractional $p$-Laplace equations. The focuses of this section are on characterizing one-sided fractional harmonic functions and  deriving the nonstandard fractional Neumann boundary operators via considered variational problems. 
Section \ref{sec-4} deals with the general energy density function $L_{p,\theta,\lambda}$ and establishes the existence of solutions to a class of problems \eqref{FundamentalMin} via the direct method of the calculus of variations. 
Section \ref{sec-5} addresses the existence and uniqueness of solutions to \eqref{FundamentalMin}
in the case $p=2$ via Galerkin formulations and the Lax-Milgram Theorem,  which are important for developing efficient numerical methods \cite{Feng_Sutton3}. Special attention is given to studying  the subtle boundary value problems for   one-sided $2\alpha$-order fractional differential equations. 
Moreover, some regularity results for the weak solutions  are also established. Finally, the paper is concluded with some remarks in Section \ref{sec-6}.


\section{Preliminaries}\label{sec-2}

Throughout the paper we assume $\Omega=(a,b)$ is a finite interval, unless stated otherwise,  
$0 < \alpha < 1$, $1 < p < \infty$, $0 \leq \theta \leq 1$, $\lambda= 0$ or $1$, and let $\Gamma(z)$  
denotes the Euler-Gamma function.  For a given Banach space $V$,  $V^{*}$ denotes its dual space
(the space of bounded linear functionals on $V$). We also note that Appendix A and B contain 
the definitions and properties of weak fractional derivatives and accompanying fractional
Sobolev space theory which were developed in \cite{Feng_Sutton1} and \cite{Feng_Sutton2}. 
We also adopt the function and operator notations used there. 
For instance,  $\lIa$ and $\rIa$ denote the left and right Riemann-Liouville fractional 
integral operators of order $\alpha$ (cf. \cite{Samko, Feng_Sutton1}), and  $\lDa$, $\rDa$, and $\zDa$ are the \textit{left, right and Riesz weak fractional derivatives}, respectfully (cf. Definition \ref{WeakDerivative}). The notation
$\lrDa$ stands for either $\lDa$ or $\rDa$. The functions, $\kappa^{\alpha}_{\pm} : \Omega \rightarrow \R$ stand for the kernel functions of $\lrDa$ (i.e. $\lrDa \kappa^{\alpha}_{\pm} \equiv 0$ in $\Omega$) and $\kappa^{\alpha}_{z}:\Omega \rightarrow \R$ denotes any linear combination of the functions $\kappa^{\alpha}_{z_1}:\Omega \rightarrow \R$ and $\kappa^{\alpha}_{z_2}:\Omega \rightarrow \R$; the two unique elements of the nullspace of the Riesz derivative,  $\mathcal{N}({^{z}}{\mathcal{D}}{^{\alpha}})$ (cf. Proposition \ref{NullRiesz}).

The function spaces, $\lWap(\Omega)$, $\rWap(\Omega)$, $W^{\alpha,p}(\Omega)$, and $\zWap(\Omega)$ denote respectfully the \textit{left, right, symmetric, and Riesz fractional Sobolev spaces} (cf. Definition \ref{Sobolev}). Moreover, $\lcWap(\Omega)$ and $\rcWap(\Omega)$ denote respectively the subspaces
of $\lWap(\Omega)$ and $\rWap(\Omega)$ with $c_\mp^{1-\alpha}=0$ (see Appendix A for the precise  definitions). 
In the case that $p=2$, we use the conventional notation $\lHa(\Omega)$, $\rHa(\Omega)$, $H^{\alpha}(\Omega)$, and $\zHa(\Omega)$ to denote the corresponding Hilbert spaces.

In order to consider a general class of fractional calculus of variation problems and 
to present them in a unified fashion, for $0\leq \theta\leq 1$ and $\lambda=0$ or $1$, we introduce the following family of function spaces:
\begin{align}\label{general_spaces}
 \mathcal{W}^{\alpha,p}_{\theta,\lambda}
 &:=\theta (1-\theta)\, {W}{^{\alpha,p}}(\Omega) + \lambda\Bigl\{  \llbracket\theta \rrbracket   \, \rWap(\Omega)  
 + \llbracket 1-\theta \rrbracket \, \lWap(\Omega) \Bigr\}  \\
 &\qquad 
 + (1-\lambda) \Bigl\{  \llbracket\theta \rrbracket\, \rcWap(\Omega) 
 +   \llbracket 1-\theta \rrbracket \,  \lcWap(\Omega)   \Bigr\}, \nonumber
\end{align}
where $\llbracket \theta \rrbracket$ denotes the integer part of $\theta$. 
It is easy to check that
\begin{align}\label{general_spaces_2}
\mathcal{W}^{\alpha,p}_{\theta, \lambda} = 
\begin{cases}
\lcWap(\Omega) &\text{if } \theta=0 \mbox{ and } \lambda = 0,\\
W^{\alpha,p}(\Omega) &\text{if } 0<\theta<1 \mbox{ and } \lambda=0, \\ 
\lWap(\Omega) &\text{if }\theta = 0 \mbox{ and } \lambda=1,\\
\rcWap(\Omega) &\text{if }\theta = 1 \mbox{ and }\lambda = 0,\\
W^{\alpha,p}(\Omega) &\text{if } 0<\theta<1 \mbox{ and } \lambda=1, \\ 
\rWap(\Omega) &\text{if }\theta =1 \mbox{ and } \lambda =1.
\end{cases}
\end{align}

The norm on $\mathcal{W}^{\alpha,p}_{\theta,\lambda}$ is naturally defined as 
\begin{align}
\|u\|_{\mathcal{W}^{\alpha, p}_{\theta,\lambda}} := 
\begin{cases}
\|u\|_{\lWap(\Omega)} &\text{if } \theta = 0 \mbox{ and } \lambda=0,1, \\
\|u\|_{W^{\alpha,p}(\Omega)} &\text{if } 0<\theta<1 \mbox{ and } \lambda=0,1, \\ 
\|u\|_{\rWap(\Omega)} &\text{if } \theta = 1 \mbox{ and } \lambda=0,1.
\end{cases}
\end{align}

We also introduce, in the case $\alpha p >1$,
\begin{align}\label{zero_trace_spaces}
{_{0}}\mathcal{{W}}^{\alpha, p}_{\theta,\lambda} := \Bigl\{ u \in \mathcal{W}^{\alpha,p}_{\theta,\lambda } (\Omega) \, : \, (1-\theta)\, \lT u = 0 \text{ and } \theta\, \rT u = 0  \Bigr\}.
\end{align}
Here $\lrT$ denotes the trace operators (cf. Definition \ref{trace} for their 
precise meanings).  

\begin{remark}\label{BoundaryNotation}
When $\theta = 1$,  $(1 - \theta)\, \lT$ should not be considered and we only consider the condition ${^{+}}{T} u = 0$. Similarly, when $\theta =0$, $\theta\, \rT$ should not be considered and we only have $\lT u =0$. Finally, if $0 < \theta <1$, then we consider these conditions at both ends of the domain; $\lT u = \rT u = 0$. 
\end{remark}

Additional necessary results related to weak fractional derivatives and fractional Sobolev spaces can be found in  
Appendix A and B, respectfully.


\section{One-sided Fractional Laplace and Neumann Boundary Operators}\label{sec-3}
Because of the dependence of the energy density function $L_{p,\lambda,\theta}$ in \eqref{eq1.7} on one-sided fractional derivatives, $\lDa v$ and $\rDa v$,  which does not have
counterparts in the integer order case, many more scenarios must be considered in the 
fractional calculus of variations. To better understand the new problems and to ease
the presentation and explanation, we first consider some simpler energies of the fractional 
calculus of variations. 
In particular, we shall focus on the case $p=2$, derive/define one-sided fractional Laplace operators and one-sided fractional Neumann boundary operators, and explore basic properties
of these operators.   
In Section \ref{sec-4} and \ref{sec-5}, we shall consider more general energies and among other issues in the fractional calculus of variations, the existence and uniqueness of minimizers.

\begin{definition}
      Let $\alpha >0$.  The functional 
        \begin{align}\label{LRDirichletEnergy}
            {^\pm}{E}^\alpha_p (u) := \dfrac{1}{p} \int_{\Omega} \bigl|\lrDa u \bigr|^{p} \,dx.
        \end{align}
is called the $\alpha$-order left/right Dirichlet $p$-energy, and the functional
        \begin{align}\label{SymmetricDiricheltEnergy}
            E^{\alpha}_{p} (u) : = \frac{1}{2}\Bigl( {^{-}}{E}{^{\alpha}_{p}} (u) + {^{+}}{E}{^{\alpha}_{p}}(u) \Bigr)
        \end{align}
is called the $\alpha$-order symmetric Dirichlet  $p$-energy. Moreover, the functional 
        \begin{align}\label{RieszDirichletEnergy}
            {^{z}}{E}{^{\alpha}_{p}} (u) : = \dfrac{1}{p} \int_{\Omega} \bigl| \zDa u\bigr|^{p} \,dx 
        \end{align}
        is called the $\alpha$-order Riesz $p$-energy. 
\end{definition}

\begin{remark}
   (i) The left/right $\alpha$-order Dirichlet $p$-energy is a special class of energies for which 
   $\theta \in \{ 0 ,1\}$ and $\lambda =0$ in the density function $L_{p,\theta,\lambda}$ so that 
  \begin{align*}
     L_{p,\theta,\lambda} ({^{-}}{\mathcal{D}}{^{\alpha}} v, {^{+}}{\mathcal{D}}{^{\alpha}} v , v ,x)
     = L({^{\pm}}{\mathcal{D}}{^{\alpha}} v).
    \end{align*} 
 Similarly, the  $\alpha$-order Riesz $p$-energy has $\theta = \frac12$ and $\lambda =0$ so that 
    \begin{align*}
         L_{p,\theta,\lambda} ({^{-}}{\mathcal{D}}{^{\alpha}} v, {^{+}}{\mathcal{D}}{^{\alpha}} v , v ,x) 
         =  L ({^{z}}{\mathcal{D}}{^{\alpha}} v).
    \end{align*}
    
    (ii) The left/right $\alpha$-order Dirichlet $p$-energy given by (\ref{LRDirichletEnergy}), is well defined for any $u \in {^{\pm}}{W}{^{\alpha,p}}(\Omega)$, the $\alpha$-order symmetric Dirichlet  $p$-energy given by (\ref{SymmetricDiricheltEnergy}) is well defined for any function $u \in W^{\alpha,p}(\Omega)$, and similarly, the  $\alpha$-order Riesz $p$-energy given by (\ref{RieszDirichletEnergy}) is well defined for any $u \in {^{z}}{W}{^{\alpha,p}}(\Omega)$.
    
\end{remark}

\subsection{One-sided Fractional Laplace Operators}\label{sec-4.1}

A plethora of work has been done in recent years to define and study numerous, sometimes nonequivalent, definitions of fractional Laplace operators. Unlike the existing definitions, the notions to be presented below are based on and related to the notion of weak fractional derivatives and particular energy functionals. 
 This is in concert with the way one may derive the integer Laplacian via \textit{Dirichlet's principle}. The goal of this subsection is to establish this connection methodically.

\begin{proposition}\label{prop3.1}
	Let $u \in \lrWap(\Omega)$ be a minimizer of ${^{\pm}}{E}{^{\alpha}_{p}}$. Then it must satisfy the following fractional differential equation in the distributional sense:
	\begin{align}\label{L/RFractionalLaplaceEq}
	{^{\pm}}{\Delta}^{\alpha}_{p} u := \rlDa \Bigl(\bigl|\lrDa u \bigr|^{p-2} \lrDa u \Bigr)   = 0. 
	\end{align}
\end{proposition}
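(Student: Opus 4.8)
The plan is to derive the Euler--Lagrange equation by the standard first-variation argument, adapted to the one-sided weak fractional derivative setting. Fix a minimizer $u \in \lrWap(\Omega)$ and take an arbitrary test function $\varphi \in C_c^\infty(\Omega)$. Since $\varphi$ is smooth with compact support in $\Omega$, its kernel coefficient $c_\mp^{1-\alpha}$ vanishes, so $\varphi \in \lrcWap(\Omega) \subset \lrWap(\Omega)$ and in particular $u + t\varphi$ remains an admissible competitor for every $t \in \R$. Define $g(t) := {^\pm}E^\alpha_p(u + t\varphi) = \tfrac1p\int_\Omega |\lrDa u + t\,\lrDa \varphi|^p\,dx$, using linearity of the weak fractional derivative operator $\lrDa$. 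Because $u$ is a minimizer, $g$ attains its minimum at $t=0$, hence $g'(0) = 0$ provided $g$ is differentiable there.

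Next I would justify differentiation under the integral sign and compute $g'(0)$. The integrand $t \mapsto |\lrDa u + t\,\lrDa\varphi|^p$ is differentiable in $t$ with derivative $p\,|\lrDa u + t\,\lrDa\varphi|^{p-2}(\lrDa u + t\,\lrDa\varphi)\,\lrDa\varphi$, and for $|t|\le 1$ this is dominated (via Young's inequality and the elementary bound $|a+tb|^{p-1}|b| \lesssim |a|^{p-1}|b| + |b|^p$) by a fixed $L^1(\Omega)$ function, since $\lrDa u \in L^p(\Omega)$ and $\lrDa\varphi \in L^\infty(\Omega)$ and $\Omega$ is bounded. Hence by the dominated convergence theorem,
\begin{align*}
0 = g'(0) = \int_\Omega \bigl|\lrDa u\bigr|^{p-2}\,\lrDa u\;\lrDa\varphi\,dx \qquad \text{for all } \varphi \in C_c^\infty(\Omega).
\end{align*}
Now observe that $w := |\lrDa u|^{p-2}\,\lrDa u \in L^{p'}(\Omega)$ by Hölder's conjugate bookkeeping, so the left-hand side is a well-defined duality pairing; the identity above says precisely that the distributional right/left weak fractional derivative of $w$ is zero, i.e. $\rlDa w = 0$ in $\mathcal{D}'(\Omega)$, which is the claimed equation ${^\pm}\Delta^\alpha_p u = 0$. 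Here one uses the integration-by-parts / duality characterization of $\rlDa$ against $C_c^\infty(\Omega)$ test functions recorded in Appendix A (the definition of the weak fractional derivative), noting the expected left/right swap: varying a left derivative energy produces a right derivative in the equation and vice versa.

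The main obstacle is the technical justification of the two limiting steps — differentiating $g$ at $t=0$ and passing the derivative inside the integral — in the regime $1 < p < \infty$, including the degenerate/singular behavior of $|\cdot|^{p-2}$ near the origin when $p \ne 2$. For $p \ge 2$ this is routine; for $1 < p < 2$ one must be slightly careful that the pointwise derivative of $s \mapsto |s|^p$ still exists at $s = 0$ (it does, equal to $0$) and that the difference quotients are uniformly $L^1$-dominated, which follows from the inequality $\bigl|\,|a+tb|^p - |a|^p\,\bigr| \le C_p(|a|^{p-1}|b| + |t|^{p-1}|b|^p)\,|t|$. A secondary point worth stating explicitly is why $u + t\varphi$ stays in the correct space $\lrWap(\Omega)$ rather than merely $\lrcWap(\Omega)$ — this is immediate since $\lrcWap(\Omega)$ is a subspace of $\lrWap(\Omega)$ and the latter is linear — but it matters that the admissible class for the minimization includes these perturbations, which is exactly the case here since no boundary/trace constraints are imposed in Proposition~\ref{prop3.1}.
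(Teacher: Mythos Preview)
Your proposal is correct and follows essentially the same first-variation argument as the paper: perturb by $\varphi\in C_c^\infty(\Omega)$, differentiate the one-variable function $t\mapsto {^{\pm}}E^\alpha_p(u+t\varphi)$ at $t=0$, and read off the distributional equation via the definition of the weak fractional derivative. If anything, you are more careful than the paper---the paper simply asserts that $\Phi$ ``is a power function in $t$'' and therefore differentiable, whereas you supply the dominated-convergence justification and the $L^{p'}$ integrability of $|\lrDa u|^{p-2}\lrDa u$; these additions are welcome but not a different route.
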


\begin{proof} 
	The proof follows immediately from the Fundamental Lemma of the Calculus of Variations 
	(\cite{Giaquinta}), which says that the first variation of ${^{\pm}}{E}{^{\alpha}_{p}}$ must vanish 	at $u$.  For completeness, we briefly carry out the derivation below. 
	
	Let $\varphi \in C^{\infty}_{0}(\Omega)$ and define $\Phi: \R\rightarrow \R$ by 
	\begin{align*}
	\Phi(t) := {^{\pm}}{E}{^{\alpha}_{p}} (u+t\varphi).
	\end{align*}
    Since $\Phi$ is a power function in $t$, it is differentiable and 
	\begin{align}\label{step1}
	\Phi'(t) = \int_{\Omega} \bigl|\lrDa (u+t\varphi) \bigr|^{p-2}\, \lrDa(u+t\varphi) \lrDa \varphi \,dx.
	\end{align}
	Since $u$ is a minimizer of ${^{\pm}}{E}{^{\alpha}_{p}}$, then $t=0$ is an extreme point for $\Phi$, hence, it must hold that $\Phi'(0)=0$. Setting $t =0$ in \eqref{step1}, we get 
	\begin{align*}
	\int_{\Omega}  \bigl|\lrDa u \bigr|^{p-2}\, \lrDa u\cdot \lrDa \varphi \,dx = 0 \qquad \forall \, \varphi \in C^{\infty}_{0}(\Omega),
	\end{align*}
	which implies that \eqref{L/RFractionalLaplaceEq} holds in the distributional sense 
	by the weak fractional derivative definition (cf. Definition \ref{WeakDerivative}).
\end{proof}

Similarly, we also can prove the following conclusions. 

\begin{proposition}\label{prop3.2}
	Let $u \in W^{\alpha,p}(\Omega)$ be a minimizer of ${E}{^{\alpha}_{p}}$. Then it must satisfy the following fractional differential equation in the distributional sense:
	\begin{align}\label{SympLaplacian}
	{\Delta}{^{\alpha}_{p}}u := \frac12 \Bigl(\rDa  \bigl|\lDa u \bigr|^{p-2} \lDa u 	
	+  \lDa  \bigl|\rDa u \bigr|^{p-2} \rDa u \Bigr)=0.
	\end{align}
	Moreover, if $u \in \zWap(\Omega)$ is a minimizer of ${^{z}}{E}{^{\alpha}_{p}}$, then it must satisfy the following fractional differential equation in the distributional sense:
	\begin{align}\label{SympLaplacian}
	{^{z}}{\Delta}{^{\alpha}_{p}} u := \zDa  \bigl|\zDa u \bigr|^{p-2} \zDa u=0 .
	\end{align}
\end{proposition}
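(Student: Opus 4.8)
The plan is to repeat, termwise, the first–variation computation from the proof of Proposition \ref{prop3.1}; the only new ingredient is the left–right (respectively, self-) adjointness built into the definition of the weak fractional derivatives, which transfers the outer derivative onto the nonlinear term.

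\emph{Symmetric case.} Fix $\varphi\in C_0^\infty(\Omega)$ and set $\Phi(t):=E^{\alpha}_{p}(u+t\varphi)$. Since $1<p<\infty$, the scalar map $s\mapsto|s|^p$ is $C^1$ with derivative $p|s|^{p-2}s$; because $u\in W^{\alpha,p}(\Omega)$ forces $\lDa u,\rDa u\in L^p(\Omega)$ while $\varphi\in C_0^\infty(\Omega)$ gives $\lDa\varphi,\rDa\varphi\in L^\infty(\Omega)$ (a property of weak fractional derivatives of compactly supported smooth functions, see Appendix A), differentiation under the integral sign is justified for $|t|\le 1$ by dominated convergence, so $\Phi$ is differentiable and
\begin{align*}
\Phi'(t)=\frac12\int_\Omega\Bigl(\bigl|\lDa(u+t\varphi)\bigr|^{p-2}\lDa(u+t\varphi)\,\lDa\varphi+\bigl|\rDa(u+t\varphi)\bigr|^{p-2}\rDa(u+t\varphi)\,\rDa\varphi\Bigr)\,dx.
\end{align*}
Minimality of $u$ makes $t=0$ an extreme point of $\Phi$, hence $\Phi'(0)=0$; that is,
\begin{align*}
\int_\Omega\Bigl(\bigl|\lDa u\bigr|^{p-2}\lDa u\,\lDa\varphi+\bigl|\rDa u\bigr|^{p-2}\rDa u\,\rDa\varphi\Bigr)\,dx=0\qquad\forall\,\varphi\in C_0^\infty(\Omega).
\end{align*}

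To conclude, I would invoke Definition \ref{WeakDerivative}. From $\lDa u,\rDa u\in L^p(\Omega)$ and H\"older's inequality, $|\lDa u|^{p-2}\lDa u$ and $|\rDa u|^{p-2}\rDa u$ lie in $L^{p'}(\Omega)\subset L^1_{\mathrm{loc}}(\Omega)$ with $p'=p/(p-1)$, so their distributional fractional derivatives $\rDa\bigl(|\lDa u|^{p-2}\lDa u\bigr)$ and $\lDa\bigl(|\rDa u|^{p-2}\rDa u\bigr)$ are well defined, and by the very definition of these distributions $\int_\Omega(|\lDa u|^{p-2}\lDa u)\,\lDa\varphi\,dx=\bl\rDa(|\lDa u|^{p-2}\lDa u),\varphi\br$ and likewise for the companion term. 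The displayed identity therefore reads $\bl\rDa(|\lDa u|^{p-2}\lDa u)+\lDa(|\rDa u|^{p-2}\rDa u),\varphi\br=0$ for all $\varphi\in C_0^\infty(\Omega)$, and the Fundamental Lemma of the Calculus of Variations (\cite{Giaquinta}) yields ${\Delta}{^{\alpha}_{p}}u=0$ in the distributional sense. The Riesz case is identical: taking $\Phi(t):={^{z}}{E}{^{\alpha}_{p}}(u+t\varphi)$ one obtains $\int_\Omega|\zDa u|^{p-2}\zDa u\,\zDa\varphi\,dx=0$ for every $\varphi$, and the duality in the definition of the Riesz weak fractional derivative, together with $|\zDa u|^{p-2}\zDa u\in L^{p'}(\Omega)$, recasts this as $\bl\zDa(|\zDa u|^{p-2}\zDa u),\varphi\br=0$, whence ${^{z}}{\Delta}{^{\alpha}_{p}}u=0$ in the distributional sense.

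The only point deserving genuine care is the differentiability of $\Phi$ in the range $1<p<2$, where $s\mapsto|s|^p$ is merely $C^1$ and one cannot simply differentiate a second time: the difference quotients of the integrand must be dominated uniformly in $t$ near the zero set of $\lDa u+t\lDa\varphi$ (and of $\rDa u+t\rDa\varphi$), which one does using $\bigl||s|^{p-2}s\bigr|=|s|^{p-1}$ and Young's inequality to produce a fixed $L^1(\Omega)$ majorant. Apart from this, everything reduces to Proposition \ref{prop3.1} applied one term at a time, so no further obstacle is expected.
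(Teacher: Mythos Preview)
Your proof is correct and follows essentially the same approach as the paper, which does not give an explicit proof but simply states that the conclusions can be proved ``similarly'' to Proposition \ref{prop3.1}. You have in fact supplied more detail than the paper does---in particular the justification of differentiating under the integral sign and the integrability of the nonlinear terms---but the strategy (first variation, $\Phi'(0)=0$, then invoke Definition \ref{WeakDerivative}) is identical.
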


As the notations suggest, the following definitions are in order.

\begin{definition}\label{LaplacianDef}
        ${^{\pm}}{\Delta}{^{\alpha}_{p}}, {\Delta}{^{\alpha}_{p}}$ and ${^{z}}{\Delta}{^{\alpha}_{p}}$ are called respectively the left/right, symmetric, and Riesz $\alpha$-order fractional 
        $p$-Laplace operators.  When $p=2$, we write $\lrLa := {^{\pm}}{\Delta}{^{\alpha}_{2}}$, 
        $ {\Delta}{^{\alpha}_{}} : =\Delta{^{\alpha}_{2}}$, and ${^{z}}{\Delta}{^{\alpha}}: = {^{z}}{\Delta}{^{\alpha}_{2}}$.
\end{definition}

\begin{remark} 
	(i) Trivially, ${\Delta}{^{\alpha}_{p}}  = \frac{1}{2}( {^{-}}{\Delta}{^{\alpha}_{p}} + {^{+}}{\Delta}{^{\alpha}_{p}})$.   
	
    (ii) We defined above the \textit{fractional $p$-Laplacians} using a weak fractional derivative notion.
     This is analogous to the integer-order Laplacian, $\Delta$. 
     By considering these operators in a weak sense, we avoid the ambiguity of choosing a particular classical fractional derivative over others. For example, defining a fractional Laplacian using the Riemann-Louiville derivatives verses the Caputo derivatives. After eliminating ambiguity of choosing  differential operators, there is no need to worry about 
     what function spaces a strong/classical fractional Laplacian may operate. In the integer case, 
     it is well known that $C^2(\Omega)$ (or $H^2(\Ome)$) is a well-suited space in which to study the action of the Laplacian. However, in the fractional case, the choice of function space depends heavily on the choice of the fractional derivative notion and even then, may not be well understood.
    
    (iii) As is the case for the integer order $p$-Laplacian, each of the above fractional
    $p$-Laplace operators are derived from an $\alpha$-order $p$-energy subordinate to the appropriate fractional derivative notion.
    
\end{remark}

It is easy to see that in the case $p=2$, $\lrLa = \rlDa \lrDa$ takes derivatives in each direction. It may be natural to expect that a fractional Laplacian ought be defined using two derivatives in one direction. However, we like to point out that $\lrLa \neq \lrDa \lrDa$. Why is this the case? The following subsection is dedicated to answering this question and establishing some connections between these two differing $2\alpha$-order differential operators.

\subsection{Properties of Fractional Laplace Operators}\label{sec-4.2}

This subsection is devoted to studying the mapping properties of the fractional Laplace operators defined in Section \ref{sec-4.1}. In particular, we characterize their nullspaces and investigate under what conditions the $2\alpha$-order differentiation in one direction (as opposed to the mixed directions of $\lrLa$) is guaranteed to exist.

\subsubsection{\bf $\alpha$-Harmonic Functions}\label{sec-4.2.1}

\begin{definition}
A function $u\in L^1(\Ome)$ is said to be left/right $\alpha$-harmonic if  $\lrLa u = 0$ in the distributional sense. A function $u\in L^1(\Omega)$
is said to be $\alpha$-harmonic (resp. Riesz $\alpha$-harmonic) if $\Delta^{\alpha} u=0$
(resp. $\zLa u = 0$) in the distributional sense.  
\end{definition}

It comes as no surprise that the kernel space of the left/right fractional Laplacian is directly related to the kernel spaces of the left and right weak fractional derivatives and their mapping properties. Analogously, we recall that the kernel space of the 1-D integer Laplacian consists of constant and linear functions. 

\begin{theorem}
$u$ is left/right $\alpha$-harmonic if and only if $u = c_1 \kappa^{\alpha}_{\pm} + c_2 \lrIa  \kappa_{\mp}^{\alpha}$, 
where $\kappa_{-}^{\alpha}(x) = (x-a)^{\alpha-1}$, $\kappa_{+}^{\alpha}(x) = (b-x)^{\alpha -1}$,
and $\lrIa$ denotes the left/right $\alpha$-order Riemann-Liouville fractional integral operator
(See Appendix A and \cite{Samko, Feng_Sutton1}). 
\end{theorem}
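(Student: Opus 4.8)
\medskip
\noindent\textbf{Proof proposal.} The plan is to exploit the factorization $\lrLa = \rlDa\,\lrDa$ and reduce the characterization of left/right $\alpha$-harmonic functions to two successive first-order weak fractional derivative problems, in direct analogy with the classical fact that $\tfrac{d^2}{dx^2}u=0$ iff $u'$ is constant iff $u$ is affine. Two ingredients from Appendix~A will be used repeatedly: (a) the nullspaces $\mathcal{N}(\lrDa)$ and $\mathcal{N}(\rlDa)$ are one-dimensional, spanned respectively by the kernel functions $\kappa^{\alpha}_{\pm}$ and $\kappa^{\alpha}_{\mp}$; and (b) the Riemann--Liouville integral $\lrIa$ is a right inverse of the weak fractional derivative $\lrDa$ on $L^1(\Omega)$, i.e.\ $\lrDa\lrIa g = g$ for $g\in L^1(\Omega)$.

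\emph{Sufficiency.} By linearity it suffices to verify that the two generators are $\alpha$-harmonic. Since $\lrDa\kappa^{\alpha}_{\pm}\equiv 0$ by definition of the kernel function, $\lrLa\kappa^{\alpha}_{\pm} = \rlDa\bigl(\lrDa\kappa^{\alpha}_{\pm}\bigr)=\rlDa 0=0$. For the second generator, ingredient (b) gives $\lrDa\bigl(\lrIa\kappa^{\alpha}_{\mp}\bigr)=\kappa^{\alpha}_{\mp}$, and since $\kappa^{\alpha}_{\mp}\in\mathcal{N}(\rlDa)$ we conclude $\lrLa\bigl(\lrIa\kappa^{\alpha}_{\mp}\bigr)=\rlDa\kappa^{\alpha}_{\mp}=0$. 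Hence every $c_1\kappa^{\alpha}_{\pm}+c_2\,\lrIa\kappa^{\alpha}_{\mp}$ is left/right $\alpha$-harmonic; note that $\lrIa\kappa^{\alpha}_{\mp}\in L^1(\Omega)$, so these are legitimate elements of the class under consideration.

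\emph{Necessity.} Let $u\in L^1(\Omega)$ satisfy $\lrLa u=0$ in the distributional sense. The first, and main, step is to argue that $\lrDa u$ is represented by a locally integrable function: the distributional identity $\rlDa(\lrDa u)=0$, read through the duality defining weak fractional derivatives, forces $\lrDa u$ to be a distribution annihilated by $\rlDa$, and by the nullspace characterization (a) any such distribution equals $c_2\kappa^{\alpha}_{\mp}$ for some $c_2\in\R$. Thus $\lrDa u=c_2\kappa^{\alpha}_{\mp}$. Using (b) once more, $\lrDa\bigl(u-c_2\lrIa\kappa^{\alpha}_{\mp}\bigr)=\lrDa u-c_2\kappa^{\alpha}_{\mp}=0$, so $u-c_2\lrIa\kappa^{\alpha}_{\mp}\in\mathcal{N}(\lrDa)=\mathrm{span}\{\kappa^{\alpha}_{\pm}\}$, i.e.\ $u-c_2\lrIa\kappa^{\alpha}_{\mp}=c_1\kappa^{\alpha}_{\pm}$ for some $c_1\in\R$. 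This is exactly the asserted representation.

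\emph{Main obstacle.} Everything is formal once the weak fractional calculus is in place; the delicate point is the first move of the necessity part --- passing from ``$\rlDa\lrDa u=0$ as a distribution'' (where the second-order operator is interpreted by testing against $C_0^\infty(\Omega)$) to ``$\lrDa u$ is an honest function lying in $\mathcal{N}(\rlDa)$''. This requires the regularity and mapping properties of iterated weak fractional derivatives from Appendix~A, together with the observation that $\lrIa\kappa^{\alpha}_{\mp}$ lies in a fractional Sobolev space on which the identity $\lrDa\lrIa=\mathrm{id}$ and the nullspace descriptions are valid.
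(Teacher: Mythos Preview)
Your proposal is correct and follows essentially the same route as the paper: factor $\lrLa=\rlDa\lrDa$, use the one-dimensional nullspace of $\rlDa$ to conclude $\lrDa u=c_2\kappa^{\alpha}_{\mp}$, and then invoke the right-inverse property $\lrDa\lrIa=\mathrm{id}$ (equivalently, the Fundamental Theorem of Weak Fractional Calculus, Theorem~\ref{FTWFC}) to recover $u=c_1\kappa^{\alpha}_{\pm}+c_2\lrIa\kappa^{\alpha}_{\mp}$. The paper phrases the last step as a direct application of the FTwFC rather than subtracting $c_2\lrIa\kappa^{\alpha}_{\mp}$ and appealing to $\mathcal{N}(\lrDa)$, but the two formulations are equivalent; your flagged ``main obstacle'' is a fair caveat that the paper likewise passes over without elaboration.
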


\begin{proof}
The sufficiency is a direct calculation and a consequence of the Fundamental Theorem 
of Weak Fractional Calculus (FTwFC) (cf. Theorem \ref{FTWFC}).

To show the necessity, assume that $u$ is left/right $\alpha$-harmonic and let $\mathcal{N}(\lrDa)$ denote the null/kernel space of the operator $\lrDa$. By assumption $\rlDa \lrDa u =0$. It follows that $\lrDa u \in \mathcal{N} (\rlDa )$. Hence $\lrDa u = c_2 \kappa^{\alpha}_{\mp}$. Applying the left/right $\alpha$-order fractional integral operator $\lrIa$ and by the FTwFC (cf. Theorem \ref{FTWFC}), we have that $u = c_1 \kappa_{\pm}^{\alpha} + c_2 {^{\pm}}{I}{^{\alpha}} \kappa_{\mp}^{\alpha} $. This concludes the proof.
\end{proof}

Next, we prove an analogous result for the symmetric $\alpha$-order fractional Laplacian.

\begin{lemma} \label{harmoniclemma}
$\Delta^{\alpha} \psi = 0$ cannot hold for every $\psi \in C^{\infty}_{0}(\Omega)$. 
\end{lemma}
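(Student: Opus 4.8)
The plan is to argue by contradiction, testing the putative identity against the test function itself. Recall from Proposition~\ref{prop3.2} (with $p=2$) that
\[
\Delta^{\alpha} = \tfrac12\bigl(\rDa\lDa + \lDa\rDa\bigr).
\]
So suppose that $\Delta^{\alpha}\psi = 0$ in the distributional sense for \emph{every} $\psi\in C^{\infty}_{0}(\Omega)$, and fix one such $\psi$. First I would record the (standard) regularity fact that for $\psi\in C^{\infty}_{0}(\Omega)$ both $\lDa\psi$ and $\rDa\psi$ belong to $L^{2}(\Omega)$ — indeed each is smooth up to $\overline{\Omega}$ and vanishes near one endpoint of $\Omega$ (see Appendix~A and \cite{Feng_Sutton1}) — so that $\rDa\lDa\psi$ and $\lDa\rDa\psi$ are bona fide distributions on $\Omega$ and the pairing $\langle\Delta^{\alpha}\psi,\psi\rangle$ is meaningful.

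The heart of the argument is then a one-line computation using the duality that \emph{defines} the weak fractional derivatives (Definition~\ref{WeakDerivative}), precisely as in the proof of Proposition~\ref{prop3.1}: for $w\in L^{1}_{\mathrm{loc}}(\Omega)$ and $\varphi\in C^{\infty}_{0}(\Omega)$ one has $\langle\rDa w,\varphi\rangle = \int_{\Omega} w\,\lDa\varphi\,dx$ and $\langle\lDa w,\varphi\rangle = \int_{\Omega} w\,\rDa\varphi\,dx$. Applying this with $w=\lDa\psi$ and $w=\rDa\psi$ respectively, the hypothesis gives
\[
0 = \langle\Delta^{\alpha}\psi,\psi\rangle = \tfrac12\Bigl(\langle\rDa\lDa\psi,\psi\rangle + \langle\lDa\rDa\psi,\psi\rangle\Bigr) = \tfrac12\Bigl(\|\lDa\psi\|_{L^{2}(\Omega)}^{2} + \|\rDa\psi\|_{L^{2}(\Omega)}^{2}\Bigr),
\]
hence $\lDa\psi = \rDa\psi = 0$ a.e.\ in $\Omega$. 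To conclude, I would invoke the description of $\mathcal{N}(\lDa)$ from the preceding theorem (and Appendix~A): any function annihilated by $\lDa$ is a multiple of $\kappa^{\alpha}_{-}(x)=(x-a)^{\alpha-1}$, which (as $\alpha-1<0$) is unbounded near $x=a$, so the only such multiple lying in $C^{\infty}_{0}(\Omega)$ is $0$. Thus every $\psi\in C^{\infty}_{0}(\Omega)$ would be identically zero, a contradiction, which proves the lemma.

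The routine parts are the regularity statement $\lDa\psi,\rDa\psi\in L^{2}(\Omega)$ and the bookkeeping in the duality step. The one point I would double-check against the exact form of Definition~\ref{WeakDerivative} is that no stray sign or boundary term appears when $\psi$ is paired with $\rDa(\lDa\psi)$, i.e.\ that $\langle\rDa\lDa\psi,\psi\rangle$ equals $+\|\lDa\psi\|_{L^{2}(\Omega)}^{2}$ (this is exactly the one-variable fractional analogue of the integer identity $\int(\partial^{*}\partial\psi)\psi=\|\partial\psi\|^{2}$). Note, however, that the argument is robust to this: even with an overall sign the displayed identity would still force $\lDa\psi=\rDa\psi=0$ and hence $\psi=0$. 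This is the only genuine obstacle; everything else is formal.
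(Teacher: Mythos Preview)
Your proof is correct and follows essentially the same approach as the paper: both argue by contradiction, pair $\Delta^{\alpha}\psi$ against a test function (the paper against an arbitrary $\varphi\in C^{\infty}_{0}$ and then specializes $\varphi=\psi$; you go directly to $\varphi=\psi$), and use the duality in Definition~\ref{WeakDerivative} to reduce to a sign obstruction. The paper writes the resulting identity as $\int(\rDa\psi)^{2}=-\int(\lDa\psi)^{2}$ and stops at ``this is a contradiction,'' whereas you rearrange to $\|\lDa\psi\|^{2}+\|\rDa\psi\|^{2}=0$ and then spell out, via the kernel characterization $\mathcal{N}(\lDa)=\mathrm{span}\{\kappa^{\alpha}_{-}\}$, why this forces every $\psi\in C^{\infty}_{0}(\Omega)$ to vanish; this extra step is a welcome clarification but not a different method. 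Your sign check is also correct: since $0<\alpha<1$ gives $(-1)^{[\alpha]}=1$ in Definition~\ref{WeakDerivative}, one indeed has $\langle\rDa\lDa\psi,\psi\rangle=\|\lDa\psi\|_{L^{2}}^{2}$ with no minus sign.
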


\begin{proof}
By contradiction, assume $\Delta^{\alpha} \psi = 0$ for every $\psi \in C^{\infty}_{0}(\Omega)$. It follows by the definition of weak fractional derivatives (cf. Definition \ref{WeakDerivative})
\begin{align*}
	&\Delta^{\alpha} \psi = 0\\
	&\Leftrightarrow \quad \int_{\Omega} \lDa \rDa \psi  \,\varphi \,dx = - \int_{\Omega} \rDa \lDa \psi\, \varphi \,dx \quad \forall \, \psi, \varphi \in C^{\infty}_{0}(\Omega)\\ 
	& \Leftrightarrow \quad \int_{\Omega} \rDa \psi \rDa \varphi \,dx = - \int_{\Omega}  \lDa \psi \lDa\varphi \,dx \quad \forall \, \psi, \varphi \in C^{\infty}_{0}(\Omega)\\
	& \Rightarrow \quad \int_{\Omega} (\rDa \psi)^2 \,dx = - \int_{\Omega} (\lDa \psi)  ^{2} \,dx \quad \forall\, \psi \in C^{\infty}_{0}(\Omega).
\end{align*}
This is a contradiction and concludes the proof. 
\end{proof}

\begin{theorem} \label{thm3.2}
$u$ is $\alpha$-harmonic if and only if $u =0$. 
\end{theorem}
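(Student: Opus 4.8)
The plan is to prove the two implications separately. The direction $u = 0 \Rightarrow u$ is $\alpha$-harmonic is trivial: $\Delta^{\alpha} 0 = 0$ in the distributional sense. The content is in the forward direction: if $\Delta^{\alpha} u = 0$ in the distributional sense, then $u \equiv 0$.

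First I would unwind the definition of the distributional identity $\Delta^{\alpha} u = 0$. By Definition \ref{LaplacianDef} and \eqref{SympLaplacian}, $\Delta^{\alpha} u = \tfrac12(\rDa \lDa u + \lDa \rDa u)$, so testing against $\varphi \in C^{\infty}_{0}(\Omega)$ and using the weak fractional derivative definition (Definition \ref{WeakDerivative}) to move the outer derivative onto $\varphi$, the identity becomes
\begin{align*}
	\int_{\Omega} \lDa u \, \lDa \varphi \, dx + \int_{\Omega} \rDa u \, \rDa \varphi \, dx = 0 \qquad \forall\, \varphi \in C^{\infty}_{0}(\Omega).
\end{align*}
The natural move is then to take $u$ itself as the test function, which would force $\int_{\Omega} (\lDa u)^2 + (\rDa u)^2\, dx = 0$ and hence $\lDa u = \rDa u = 0$; combined with the characterization of the nullspaces $\mathcal{N}(\lDa)$ and $\mathcal{N}(\rDa)$ (spanned by $\kappa^{\alpha}_{-}$ and $\kappa^{\alpha}_{+}$ respectively), whose only common element is $0$ (they are linearly independent, or more simply a function in both must be both a multiple of $(x-a)^{\alpha-1}$ and of $(b-x)^{\alpha-1}$, impossible unless it vanishes), this gives $u = 0$.

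The main obstacle is that $u \in L^1(\Omega)$ (or at best in a fractional Sobolev space), not in $C^{\infty}_{0}(\Omega)$, so it is not an admissible test function and the above substitution is not directly legitimate — indeed Lemma \ref{harmoniclemma} already shows that $\Delta^{\alpha}$ does not annihilate all of $C^{\infty}_{0}(\Omega)$, which is exactly the obstruction to naively plugging in. The fix is a density/approximation argument: I would first note that $\Delta^{\alpha} u = 0$ distributionally forces $u$ to have enough regularity to belong to the symmetric space $H^{\alpha}(\Omega)$ — more precisely, the identity $\rDa \lDa u = -\lDa \rDa u$ in the distributional sense, together with the mapping properties of $\lDa, \rDa$ from Appendix A, shows $\lDa u, \rDa u \in L^2(\Omega)$ after a bootstrapping step, so $u \in H^{\alpha}(\Omega)$. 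Then approximate $u$ by $\varphi_n \in C^{\infty}_{0}(\Omega)$ in the $H^{\alpha}$-norm (using density of smooth compactly supported functions, or first the trace-zero condition if needed), pass to the limit in the bilinear identity to extend it to all test functions in $H^{\alpha}_0(\Omega)$, and finally substitute $\varphi = u$ to conclude $\|\lDa u\|_{L^2}^2 + \|\rDa u\|_{L^2}^2 = 0$.

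An alternative, cleaner route avoids the regularity bootstrap: observe that $\Delta^{\alpha} u = 0$ distributionally means $\rDa \lDa u = -\lDa \rDa u$ as distributions, i.e. $\lDa u$ and $-\rDa u$ have the same right weak fractional derivative only after further manipulation — instead, testing the raw identity against $\varphi$ and integrating by parts as above gives the symmetric bilinear form $\mathcal{B}(u,\varphi) := \int_{\Omega}(\lDa u \lDa\varphi + \rDa u \rDa \varphi)\,dx = 0$, which is (up to a factor) the first variation of the symmetric Dirichlet energy $E^{\alpha}_{2}$. So $u$ is a weak solution of the Euler–Lagrange equation of a coercive energy with zero data, and coercivity of $E^{\alpha}_{2}$ on the appropriate space (a fractional Poincaré-type inequality, available from \cite{Feng_Sutton2} / Appendix B, ensuring $\|\lDa u\|_{L^2}^2 + \|\rDa u\|_{L^2}^2$ controls $\|u\|_{L^2}^2$ modulo the one-dimensional kernels) forces $u = 0$. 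Either way, the crux is upgrading the weak-derivative identity so that $u$ becomes a legal test function, and then reading off the nullspace intersection $\mathcal{N}(\lDa) \cap \mathcal{N}(\rDa) = \{0\}$.
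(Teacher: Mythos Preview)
Your approach diverges from the paper's and has a substantive gap. The paper does not move only the outer derivative onto $\varphi$; it moves \emph{both} derivatives onto the test function, arriving at
\[
\int_\Omega u\, \Delta^\alpha \varphi\, dx = 0 \qquad \forall\, \varphi \in C^\infty_0(\Omega),
\]
an identity that makes sense for any $u \in L^1(\Omega)$ with no assumption whatsoever on $\lDa u$ or $\rDa u$. It then invokes Lemma~\ref{harmoniclemma} to conclude. So the lemma is the paper's main tool, not (as you wrote) ``the obstruction to naively plugging in''; you have misread its role.

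Your route stalls already at the first display: the identity $\int_\Omega (\lDa u \,\lDa \varphi + \rDa u \,\rDa \varphi)\,dx = 0$ presupposes that $\lDa u$ and $\rDa u$ exist as locally integrable functions, which is \emph{not} part of the hypothesis --- by definition $u$ is merely an $L^1$ function with $\Delta^\alpha u = 0$ distributionally. The proposed bootstrap (``the identity $\rDa \lDa u = -\lDa \rDa u$ \dots\ shows $\lDa u, \rDa u \in L^2$'') is an elliptic-regularity claim for which no argument is given and for which the paper supplies no tool. Even granting $u \in H^\alpha(\Omega)$, approximating $u$ in the $H^\alpha$-norm by $C^\infty_0$ functions requires zero traces $\lT u = \rT u = 0$, which you have no reason to expect, so the substitution $\varphi = u$ would still be illegitimate. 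The coercivity alternative has the same defect: the fractional Poincar\'e inequality (Theorem~\ref{FractionalPoincareThm}) applies on $W^{\alpha,p}$ or its zero-trace subspaces, and you have not placed $u$ there. The paper's fully dualized formulation bypasses all of these issues in one stroke.
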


\begin{proof}
    The sufficiency  is trivial. Therefore, we only need to prove the necessity.

    Assume that $u$ is $\alpha$-harmonic, that is,  $\rDa \lDa u + \lDa \rDa u = 0$. By the definition and integration by parts for weak fractional derivatives (cf. Definition \ref{WeakDerivative}), it follows that 
    \begin{align*}
    	 & \int_{\Omega} \lDa \rDa u\, \varphi \,dx = - \int_{\Omega} \rDa \lDa u \, \varphi \,dx \quad \forall \, \varphi \in C^{\infty}_{0}(\Omega) \\ 
	 &\Leftrightarrow \quad \int_{\Omega} u \lDa \rDa \varphi \,dx = - \int_{\Omega} u \rDa \lDa \varphi \,dx \quad \forall \, \varphi \in C^{\infty}_{0}(\Omega) \\ 
	 &\Leftrightarrow \quad \int_{\Omega} u \, \Delta^{\alpha} \varphi \,dx  =0\quad \forall \, \varphi \in C^{\infty}_{0}(\Omega).
    \end{align*}
    Applying Lemma \ref{harmoniclemma}, we conclude the proof.
 \end{proof}

We now turn our attention to the Riesz fractional Laplacian. Unlike the previous characterizations, the presence of both cross and same directional differentiation 
in the Riesz fractional Laplacian definition results in a more complicated set 
of harmonic functions.   On the other hand, we have a nice characterization (cf. Proposition \ref{NullRiesz}) of the kernel space $\mathcal{N} (\zDa)$ of $\zDa$,
thanks to \cite[Theorem 4.4]{Cai}. 



A consequence of the characterization 
is the following result which was proved in \cite[Theorem 4.8]{Cai}.

\begin{lemma}\label{HarmonicLemma}
    Let $\Omega = (0,1)$, $\rho(x) = x^{\alpha/2}(1-x)^{\alpha/2}$, and 
    $\kappa_z^\alpha \in \mathcal{N}({^{z}}{\mathcal{D}}{^{\alpha}})$ (cf. Proposition \ref{NullRiesz}). 
    If $2/3 < \alpha <1$, then the equation $\zDa u = \kappa^{\alpha}_{z}$ has the follow general solution

    \begin{align}\label{RieszHarmonic}
        u(x) = \rho(x) \sum_{n=0}^{\infty} u_n G^{(\alpha/2, \alpha,2)}_{n} (x)   + \kappa^{\alpha}_{z}(x), 
    \end{align}
    where 
    \begin{align*}
        u_n &= -\dfrac{\Gamma(n+1)}{\Gamma(n+1+\alpha)} \frac{\int_{0}^{1} \rho(x) \kappa^{\alpha}_{z} (x) G^{(\alpha/2, \alpha/2)}_{n}(x) \,dx}{\| G_{n}^{(\alpha/2 , \alpha/2)}(x) \|_{L^{2}((0,1),\rho)}^{2}}
    \end{align*}
    and
\begin{align*}
    G_{n}^{(\alpha/2,\alpha/2)}(x) := \sum_{k=0}^{n} g_{n,k}^{(\alpha/2, \alpha/2)} x^{k}  
\end{align*}
are the Jacobi polynomials for $n \geq 0$ with 
\begin{align*}
    g_{n,k}^{(\alpha/2 , \alpha/2)} := \dfrac{(-1)^{n+k} \Gamma(n+\alpha/2 +1) \Gamma( n + k +\alpha + 1)}{\Gamma(k+1)\Gamma(n-k+1)\Gamma(n + \alpha + 1) \Gamma(k+\alpha/2 +1)},
\end{align*}
and $L^{2}((0,1) , \rho)$ denotes the weighted $L^2$-space with the weight function $\rho$. 
\end{lemma}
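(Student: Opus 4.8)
The plan is to convert the equation $\zDa u = \kappa^{\alpha}_{z}$ into an infinite \emph{diagonal} linear system by expanding everything in the shifted Jacobi polynomials $G_{n}^{(\alpha/2,\alpha/2)}$, which form an orthogonal basis of $L^{2}((0,1),\rho)$ with $\rho(x)=x^{\alpha/2}(1-x)^{\alpha/2}$ (this is precisely the Jacobi weight attached to the parameter pair $(\alpha/2,\alpha/2)$). The first step is to recall from Proposition \ref{NullRiesz} that $\mathcal{N}(\zDa)$ is two-dimensional, spanned by $\kappa^{\alpha}_{z_1},\kappa^{\alpha}_{z_2}$, so that the right-hand side $\kappa^{\alpha}_{z}$ is itself $\zDa$-harmonic. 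Consequently the term $\kappa^{\alpha}_{z}$ appended at the end of \eqref{RieszHarmonic} accounts for the homogeneous part of the general solution, and it suffices to produce one particular solution, which I would seek in the form $u_{p}=\rho\sum_{n\geq0}u_{n}G_{n}^{(\alpha/2,\alpha/2)}$, and to check that $\zDa$ has no further kernel on the function class in play.

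The key ingredient — which I would either import from the spectral theory of one-sided Riemann--Liouville/Riesz operators or verify by hand using the action of $\lDa$ and $\rDa$ on the functions $x^{\mu}(1-x)^{\nu}$ together with the monomial expansion $G_{n}^{(\alpha/2,\alpha/2)}(x)=\sum_{k=0}^{n}g_{n,k}^{(\alpha/2,\alpha/2)}x^{k}$ — is the eigenrelation
\[
\zDa\!\left[\rho(x)\,G_{n}^{(\alpha/2,\alpha/2)}(x)\right]=\mu_{n}\,G_{n}^{(\alpha/2,\alpha/2)}(x),\qquad \mu_{n}=-\,\frac{\Gamma(n+1+\alpha)}{\Gamma(n+1)},
\]
i.e.\ $\zDa$ sends the weighted Jacobi function $\rho\,G_{n}^{(\alpha/2,\alpha/2)}$ to a scalar multiple of the plain Jacobi polynomial $G_{n}^{(\alpha/2,\alpha/2)}$. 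Granting this, I would expand $\kappa^{\alpha}_{z}=\sum_{n\geq0}c_{n}G_{n}^{(\alpha/2,\alpha/2)}$ in the $L^{2}((0,1),\rho)$-orthogonal basis, so that $c_{n}=\|G_{n}^{(\alpha/2,\alpha/2)}\|_{L^{2}((0,1),\rho)}^{-2}\int_{0}^{1}\rho\,\kappa^{\alpha}_{z}\,G_{n}^{(\alpha/2,\alpha/2)}\,dx$. Matching coefficients against the eigenrelation forces $u_{n}=\mu_{n}^{-1}c_{n}=-\frac{\Gamma(n+1)}{\Gamma(n+1+\alpha)}c_{n}$, which is exactly the claimed formula for $u_{n}$ (with the $g_{n,k}^{(\alpha/2,\alpha/2)}$ being the coefficients of $G_{n}^{(\alpha/2,\alpha/2)}$).

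The remaining — and genuinely hardest — step is to justify that this formal series actually produces a solution: one must show that $\kappa^{\alpha}_{z}\in L^{2}((0,1),\rho)$ so that $\sum c_{n}G_{n}^{(\alpha/2,\alpha/2)}$ converges there; that $u_{p}=\rho\sum u_{n}G_{n}^{(\alpha/2,\alpha/2)}$ converges in a space on which $\zDa$ acts continuously and term by term; and that the termwise image recovers $\kappa^{\alpha}_{z}$, so that $\zDa u_{p}=\sum\mu_{n}u_{n}G_{n}^{(\alpha/2,\alpha/2)}=\sum c_{n}G_{n}^{(\alpha/2,\alpha/2)}=\kappa^{\alpha}_{z}$. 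This is exactly where the hypothesis $2/3<\alpha<1$ enters: since $\mu_{n}^{-1}\sim n^{-\alpha}$ while the Jacobi coefficients $c_{n}$ of $\kappa^{\alpha}_{z}$ decay only at the polynomial rate dictated by its endpoint behaviour, one needs $\alpha$ large enough (equivalently, $\rho$ vanishing strongly enough at $0$ and $1$) for $\{u_{n}\}$ to decay fast enough against the growth of $G_{n}^{(\alpha/2,\alpha/2)}$ and for the resulting endpoint behaviour of $u_{p}$ to lie in the domain of $\zDa$; the value $\alpha=2/3$ is the threshold at which these convergence and domain conditions first hold. Once continuity of $\zDa$ on the closed span of $\{\rho\,G_{n}^{(\alpha/2,\alpha/2)}\}$ and the above convergences are in place, adding an arbitrary element of $\mathcal{N}(\zDa)$ to $u_{p}$ yields \eqref{RieszHarmonic}. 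Alternatively, the statement can be invoked verbatim from \cite[Theorem 4.8]{Cai}.
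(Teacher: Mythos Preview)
Your proposal is essentially correct and follows the same route as the paper: both ultimately rest on \cite[Theorems 4.4 and 4.8]{Cai}, which supply the eigenrelation $\zDa[\rho\,G_n^{(\alpha/2,\alpha/2)}]=\mu_n G_n^{(\alpha/2,\alpha/2)}$ and the resulting Jacobi expansion formula. You unpack the mechanism behind those cited theorems, whereas the paper simply invokes them; that is fine and arguably more informative.

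The one point where you diverge from the paper is in locating the role of the threshold $\alpha>2/3$. The paper's proof is a two-line argument: it checks directly that $\kappa^{\alpha}_{z}\in L^{2}((0,1),\rho)$ precisely when $\alpha>2/3$, and then applies \cite[Theorems 4.4, 4.8]{Cai}. Concretely, since $\kappa^{\alpha}_{z}$ is a combination of $x^{\alpha/2}(1-x)^{\alpha/2-1}$ and $x^{\alpha/2-1}(1-x)^{\alpha/2}$, one has $\rho\,(\kappa^{\alpha}_{z})^2\sim x^{3\alpha/2}(1-x)^{3\alpha/2-2}$ near the endpoints, which is integrable iff $3\alpha/2-2>-1$, i.e.\ $\alpha>2/3$. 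That single weighted-$L^2$ membership is the hypothesis needed to feed into \cite{Cai}; your more elaborate explanation via the decay rate $\mu_n^{-1}\sim n^{-\alpha}$ of the inverse eigenvalues and domain considerations for $u_p$ is not what actually drives the threshold here and risks obscuring the simple endpoint calculation. Once $\kappa^{\alpha}_{z}\in L^{2}((0,1),\rho)$ is in hand, the convergence of the series and the termwise application of $\zDa$ are already contained in the cited results.
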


\begin{proof}
It can be shown that when $\alpha >2/3$, $\kappa^{\alpha}_{z} \in L^{2}((0,1) , \rho)$ (cf. Proposition \ref{prop3.3}). It follows by (\cite[Theorem 4.4, 4.8]{Cai}) that $\zDa u = \kappa^{\alpha}_{z}$ has the solution given by (\ref{RieszHarmonic}). 
\end{proof}

\begin{proposition}\label{prop3.3}
    (i) The functions $\kappa^{\alpha}_{z}$ are Riesz $\alpha$-harmonic. (ii) For $2/3 <\alpha < 1$, any Riesz $\alpha$-harmonic function must have the form (\ref{RieszHarmonic}). 
\end{proposition}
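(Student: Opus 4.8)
The plan is to establish both parts by leveraging the definitions of the operators together with Lemma \ref{HarmonicLemma} and the earlier characterization of $\mathcal{N}(\zDa)$ (Proposition \ref{NullRiesz}). For part (i), I would argue directly: a function $u$ is Riesz $\alpha$-harmonic precisely when $\zLa u = \zDa\bigl(|\zDa u|^{p-2}\zDa u\bigr) = \zDa \zDa u = 0$ in the distributional sense (recall $p=2$ here, so $\zLa = \zDa\zDa$). If $u = \kappa^\alpha_z \in \mathcal{N}(\zDa)$, then by definition $\zDa \kappa^\alpha_z \equiv 0$, and hence $\zDa\zDa \kappa^\alpha_z = \zDa(0) = 0$ trivially. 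Thus every $\kappa^\alpha_z$ is Riesz $\alpha$-harmonic. This is essentially immediate from the fact that $\mathcal{N}(\zDa) \subset \mathcal{N}(\zLa)$.

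For part (ii), suppose $2/3 < \alpha < 1$ and $u$ is Riesz $\alpha$-harmonic, i.e. $\zDa\zDa u = 0$ distributionally. The key observation is that this forces $\zDa u \in \mathcal{N}(\zDa)$, so $\zDa u = \kappa^\alpha_z$ for some element $\kappa^\alpha_z$ of the (two-dimensional) nullspace described in Proposition \ref{NullRiesz}. I would first need to justify that $\zDa u$ is a genuine function lying in the appropriate class rather than merely a distribution — here the hypothesis $\alpha > 2/3$ is what guarantees $\kappa^\alpha_z \in L^2((0,1),\rho)$, placing us exactly in the setting where Lemma \ref{HarmonicLemma} applies. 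Once we know $\zDa u = \kappa^\alpha_z$, Lemma \ref{HarmonicLemma} gives the explicit general solution formula \eqref{RieszHarmonic}, and therefore $u$ must have that form. Running this argument over all $\kappa^\alpha_z \in \mathcal{N}(\zDa)$ (a two-parameter family) yields the full set of Riesz $\alpha$-harmonic functions.

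The main obstacle I anticipate is the regularity/well-definedness step in part (ii): passing from the distributional identity $\zDa\zDa u = 0$ to the pointwise (or $L^1$/$L^2_\rho$) statement $\zDa u = \kappa^\alpha_z$ requires knowing that $\zDa u$ is well-defined as a function and that the nullspace of $\zDa$ acting on distributions coincides with the nullspace described in Proposition \ref{NullRiesz} — one must rule out extra distributional solutions. This is where the restriction $\alpha > 2/3$ enters: it ensures the relevant kernel functions have enough integrability (via Proposition \ref{prop3.3}(i), or rather the computation cited in the proof of Lemma \ref{HarmonicLemma}) for the theory of \cite{Cai} to apply. The remainder — verifying that \eqref{RieszHarmonic} indeed solves the equation and that the coefficients $u_n$ are as stated — is routine, being exactly the content of Lemma \ref{HarmonicLemma}, so I would simply invoke that lemma rather than reprove it.
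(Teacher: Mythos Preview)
Your proposal is correct and follows essentially the same route as the paper: for (i) you use $\kappa^\alpha_z\in\mathcal{N}(\zDa)$ directly, and for (ii) you deduce $\zDa u\in\mathcal{N}(\zDa)$ from $\zDa\zDa u=0$, identify $\zDa u=\kappa^\alpha_z$ via Proposition~\ref{NullRiesz}, and then invoke Lemma~\ref{HarmonicLemma}. The regularity concern you flag (whether $\zDa u$ is a genuine function so that Proposition~\ref{NullRiesz} applies) is not addressed in the paper's proof either, so your treatment is at least as careful.
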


\begin{proof}
  (i) Since (by definition) $\zDa\kappa^{\alpha}_{z} =0$, then trivially, 
   $\zDa\zDa \kappa_z =0$. Thus, $\kappa^{\alpha}_{z}$ is Riesz $\alpha$-harmonic.
   
  (ii) If $u$ is given by (\ref{RieszHarmonic}), by Lemma \ref{HarmonicLemma},  we have 
   $\zDa u = \kappa^{\alpha}_{z}$.  Consequently, $\zDa\zDa u= \zDa \kappa^\alpha_z=0$. Thus, $u$ is Riesz $\alpha$-harmonic. Conversely,  if $u$ is Riesz $\alpha$-harmonic, then 
   $\zDa u$ belongs to $\mathcal{N} (\zDa)$, hence, $\zDa u\in \mbox{\rm span} \{ \kappa_{z_1}^\alpha, \kappa_{z_2}^\alpha \}$, then there exist constants $c_1$ and $c_2$ such that $\zDa u=\kappa_z^\alpha:=
   c_1 \kappa_{z_1}^\alpha +c_2 \kappa_{z_2}^\alpha$. It follows by Lemma \ref{HarmonicLemma} that $u$ must be given by
    (\ref{RieszHarmonic}). The proof is complete. 
\end{proof}

\begin{remark}
To the best of our knowledge, it is not known whether there exists a larger class of Riesz $\alpha$-harmonic functions when $0 < \alpha \leq 2/3$ because of lacking an analogue 
of Lemma \ref{HarmonicLemma} in this case. 
\end{remark}

\subsubsection{\bf A Fractional Calder\'on-Zygmund Type Estimate}\label{sec-4.2.2} 
In this subsection we consider how the one-sided fractional Laplace operator may offer control on a pure one-sided second-order derivative. This is in the spirit of the so-called Calder\'on-Zygmund inequality:
\begin{align*}
\| D^2 u\|_{L^{p}(\Omega)} \leq C \left( \|u\|_{L^{p}(\Omega)} + \|\Delta u\|_{L^{p}(\Omega)}\right)
\end{align*}
 where $D^2 u$ is the total second-order derivative of $u$. In the integer one-dimensional case, this estimate is trivial. However, when considering $2 \alpha$-order differentiation, left-right directions in one-dimension is akin to $x$-$y$ in the integer two-dimensional setting. This begs the question of whether single direction differentiation can be controlled by assumptions on the one-sided fractional Laplacian.

\begin{proposition}\label{Directions}
    Let $u\in L^1(\Omega)$, $0 < \beta < \alpha <1$, and $1 < p,q < \infty$. 
    If $\alpha > 1/p$ and $\lrLa u \in L^{p}(\Omega)$, then $ \lrDb \lrDa u \in L^{q}(\Omega)$ 
    for  all $\alpha - \beta > 1/p$ and $\beta < 1/q$.
    \end{proposition}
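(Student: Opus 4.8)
The plan is to reduce the claim to a composition of two mapping properties: first that $\lrLa u \in L^p(\Omega)$ forces $\lrDa u \in \lrWap(\Omega)$ in the sense that $\lrDa u$ has a weak fractional derivative of order $\alpha$ in the \emph{opposite} direction lying in $L^p$, and then that Sobolev-type embeddings for the one-sided fractional Sobolev spaces (cf. Appendix B and \cite{Feng_Sutton2}) allow us to trade differentiability for integrability. Concretely, set $w := \lrDa u$. By hypothesis $\rlDa w = \lrLa u \in L^p(\Omega)$, so $w \in \rlWap(\Omega)$, the fractional Sobolev space built on the \emph{opposite} one-sided derivative. Since $\alpha > 1/p$, the relevant one-sided fractional Sobolev embedding gives $w \in C^{0,\alpha-1/p}(\overline\Omega) \hookrightarrow L^\infty(\Omega)$, and in particular $w \in L^r(\Omega)$ for every $r \in (1,\infty)$; this is where the condition $\alpha > 1/p$ is consumed.

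The second step is to apply the same circle of ideas to $w$ itself but now differentiating in the \emph{same} direction as the original $\lrDa$. Here I would invoke the composition/mapping rule for weak fractional derivatives from \cite{Feng_Sutton1, Feng_Sutton2}: since $u \in L^1(\Omega)$ has $\lrDa u = w$ with $w$ of the regularity just established, one has $u \in \lrWap(\Omega)$ (or at least $u$ lies in a fractional Sobolev space on which a further $\beta$-order one-sided derivative acts), and then $\lrDb(\lrDa u) = \lrDb w$. The point is that $\lrDb$ applied to the bounded (indeed H\"older-continuous) function $w$ yields, via the boundedness of $\lrDb : \lrWap(\Omega) \to L^q(\Omega)$ whenever the index arithmetic $\alpha - \beta > 1/p$ and $\beta < 1/q$ holds, an $L^q$ function. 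I would organize this as: (a) identify the correct chain of one-sided fractional Sobolev spaces $u \in \lrWap \Rightarrow \lrDa u \in \rlWap$; (b) embed $\rlWap(\Omega) \hookrightarrow C^{0,\alpha-1/p}$ using $\alpha > 1/p$; (c) use that a H\"older-$(\alpha-\beta)$ function (equivalently a function in the appropriate fractional Sobolev class once we subtract the right boundary-singular kernel term) admits a $\beta$-order one-sided weak derivative in $L^q$ precisely under $\alpha - \beta > 1/p$ and $\beta < 1/q$, the latter being the standard Sobolev embedding threshold $W^{\gamma,p} \hookrightarrow L^q$ with $\gamma = \alpha - \beta$.

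The main obstacle I anticipate is bookkeeping the kernel/nullspace functions $\kappa^\alpha_{\pm}$ that obstruct clean composition of one-sided weak fractional derivatives: the Fundamental Theorem of Weak Fractional Calculus (Theorem \ref{FTWFC}) and the definition of $\lrWap(\Omega)$ versus $\lrcWap(\Omega)$ differ exactly by such singular terms, and $\kappa^\alpha_{\pm}(x) = (x-a)^{\alpha-1}$ or $(b-x)^{\alpha-1}$ are \emph{not} in $L^\infty$ and only barely in $L^q$ for small $q$. So in step (a) I must check that $w = \lrDa u$, coming from $\rlDa w \in L^p$, really lands in the space where the embedding applies, and in step (c) that the further derivative $\lrDb w$ does not pick up a nonintegrable kernel contribution — this is precisely why the sharp conditions $\alpha - \beta > 1/p$ (so that $w$ minus its kernel part is H\"older of the right order, giving enough regularity to differentiate $\beta$ more orders) and $\beta < 1/q$ (so the resulting $(\alpha-\beta)$-regularity embeds into $L^q$, rather than forcing a larger exponent) enter. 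Once the kernel terms are correctly tracked, each individual mapping statement is a direct citation of the one-sided fractional Sobolev embedding and composition results from \cite{Feng_Sutton1, Feng_Sutton2}; the novelty is only in the two-step chaining and the index arithmetic, so I would present the kernel-tracking lemma first and then the composition in two short paragraphs.
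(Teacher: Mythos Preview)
Your high-level decomposition is the same as the paper's --- write $w=\lrDa u$, use the FTwFC to split $w=\rlIa v + c\,\kappa^\alpha_\mp$ with $v=\lrLa u\in L^p$, then take $\lrDb$ of each piece --- but the way you propose to execute each step has genuine gaps.

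First, the embedding you invoke in step (b), $\rlWap(\Omega)\hookrightarrow C^{0,\alpha-1/p}(\overline\Omega)$, is simply false as stated: the kernel function $\kappa^\alpha_\mp$ lies in $\rlWap$ but blows up at one endpoint. What is true (and what the paper uses, citing Samko) is that $\rlIa v\in C^{\alpha-1/p}$ when $v\in L^p$; the kernel piece must be handled separately and \emph{never} lands in $L^\infty$. You flag this as an ``obstacle'' but your resolution is to track kernels via a ``short lemma'' --- in fact the computation of $\lrDb\kappa^\alpha_\mp$ and its $L^q$-norm is the longest and most delicate part of the paper's proof, requiring the explicit Riemann--Liouville formula, Gauss hypergeometric functions, and a H\"older-splitting estimate near both endpoints. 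This is not a citation; it is a page of hard analysis, and it is exactly where the condition $\beta<1/q$ is consumed.

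Second, your step (c) appeals to a black-box mapping ``$\lrDb:\lrWap\to L^q$'', but $w$ lives in $\rlWap$, not $\lrWap$: its known regularity is in the \emph{opposite} direction from the derivative you now want to take. No off-the-shelf one-sided Sobolev embedding covers this mixed-direction situation. The paper handles the smooth piece $\lrDb\,\rlIa v$ not by embedding but by an explicit structure theorem (Samko, Theorem~3.1): $\lIob\rIa v$ decomposes as a power of $(x-a)$ plus a function in $C^{1-1/p+\alpha-\beta}$, and the condition $\alpha-\beta>1/p$ is what makes this remainder $C^1$, so that after one classical derivative you get $(x-a)^{-\beta}$ plus something continuous --- hence $L^q$ exactly when $\beta<1/q$. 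Your index arithmetic is right, but the mechanism is explicit computation, not abstract mapping properties.
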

    
    \begin{proof}
    We only prove the assertion for the left direction with $\Omega = (a,b)$ because the other follows similarly.
    
    Set $v:=\rDa \lDa u= \lLa u \in L^p((a,b))$. By the FTwFC (cf. Theorem \ref{FTWFC}), we have
    \begin{align}\label{1} 
    \lDa u(x) = \rIa v(x)  + \Gamma(1-\alpha)^{-1} [ \rIoa \lDa u](b) (b-x)^{\alpha-1}.
    \end{align}
    Taking the $\beta$-order left fractional derivative on both sides of (\ref{1}) yields
    \begin{align*}
       & \lDb \lDa u(x)\\ 
       &\quad = \lDb \rIa v(x) + \Gamma(1-\alpha)^{-1}[ \rIoa  \lDa u] (b) \lDb (b-x)^{\alpha-1} \\
       &\quad = : J_1 + J_2 .
    \end{align*}
    We now calculate and estimate $J_1$ and $J_2$. Since $v \in L^{p}((a,b))$, it follows from  \cite[Theorem 3.6]{Samko} that $\rIa v \in C^{\alpha - 1/p}([a,b])$ and by 
    \cite[Theorem 3.1]{Samko}) we get
    \begin{align}\label{2}
        \lIob \rIa v(x) = \dfrac{\rIa v(a)}{\Gamma(1+\alpha)} (x-a)^{1 - \beta} + \psi(x),
    \end{align}
    where $\psi \in C^{1 - 1/p + \alpha - \beta}([a,b])$ such that $|\psi(x)| \leq (x-a)^{1- 1/p + \alpha -\beta}.$ Hence $\psi \in C^1([a,b])$ when $\alpha  - \beta > 1/p$. Then 
    \begin{align}\label{3}
        \lDa \rIa v (x) = \frac{\rIa v(a)}{\Gamma(1+\alpha) (1-\beta)^{-1}} (x-a)^{-\beta} + \psi'(x),
    \end{align}
    where $\psi' \in C([a,b])$. Since $\beta < 1/q$, it follows that $J_1= \lDb \rIa v \in L^{q}(\Omega)$. 
    
    To estimate $J_2$, we need to calculate and estimate $\lDb (b-x)^{\alpha-1}.$ First, we show that ${^{RL}_{a}}{D}{^{\beta}_{x}} (b-x)^{\alpha-1} \in L^{1}((a,b))$. Then by the characterization of weak fractional derivatives (cf. \cite{Feng_Sutton1}), $\lDb (b-x)^{\alpha-1}$ coincides with the Riemann-Liouville derivative. 
    The same calculation can easily be altered to show that it belongs to  $L^1_{\mbox{\rm \tiny loc}}(\Omega)$ when $\beta \leq \alpha$. By direct calculation we obtain
   
    \begin{align*}
        \int_{a}^{b} \left| {^{-}}{\mathcal{D}}{^{\beta}} \kappa^{\alpha}_{+}(x)\right|\,dx 
        &= \frac{1}{\Gamma(1 - \beta)}\int_{a}^{b} \left|  \frac{d}{dx} \int_{a}^{x} \frac{(b-y)^{\alpha -1}}{(x-y)^{\beta}}  \,dy\right|\,dx \\ 
        &= \frac{1}{\Gamma(1-\beta)} \int_{a}^{b}\left( \frac{(b-a)^{\alpha -1}}{(x-a)^{\beta}} + (1-\alpha) \int_{a}^{x} \frac{(b-y)^{\alpha-2}}{(x-y)^{\beta}} \,dy \right)\,dx\\
         &= \frac{1}{\Gamma(1-\beta)} \biggl( \frac{(b-a)^{\alpha -\beta}}{1-\beta} + (1-\alpha) \int_{a}^{b} \int_{y}^{b} \dfrac{(b-y)^{\alpha-2}}{(x-y)^{\beta}} \,dxdy \biggr)\\
        &= \frac{1}{\Gamma(1-\beta)} \biggl( \frac{(b-a)^{\alpha -\beta}}{1-\beta} + \frac{1-\alpha}{1-\beta} \int_{a}^{b} (b-y)^{\alpha - \beta -1} \biggr)\\
        &= C(\alpha,\beta) (b-a)^{\alpha -\beta} < \infty.
    \end{align*}
    Then, recall that (cf. \cite{Samko})
    \begin{align*}
        {^{-}}{I}{^{\tau}} \left( (x-a)^{\sigma -1} (b-x)^{\gamma -1}\right) = \frac{\Gamma(\sigma)}{\Gamma(\tau + \sigma)} \frac{(x-a)^{\tau + \sigma -1}}{(b-a)^{1-\gamma}} {_{2}}{F}{_{1}} \left( \sigma, 1- \gamma ; \tau + \sigma , \dfrac{x-a}{b-a}\right) ,
    \end{align*}
    where ${_{2}}{F}{_{1}}(a,b;c,z)$ is the Gauss hypergeometric function defined 
    \begin{align}\label{Hypergeometric}
        {_{2}}{F}{_{1}} (a,b;c,z) : = \frac{\Gamma(c)}{\Gamma(b)\Gamma(c-b)} \int_{0}^{1} t^{b-1} (1-t)^{c-b-1} (1-zt)^{-a}\,dt,
    \end{align}
    if $0 < Re(b) < Re(c)$ and $|\mbox{arg}(1-z)|< \pi$. Since 
    \begin{align*}
        \lDb (b-x)^{\alpha -1} = \dfrac{1}{\Gamma(1-\beta)} \dfrac{(b-a)^{\alpha-1}}{(x-a)^{\beta}} + \dfrac{(1-\alpha)}{\Gamma(1-\beta)} \lIob (b-x)^{\alpha-2},
    \end{align*}
    by (\ref{Hypergeometric}), we have  
    \begin{align*}
        &\lIob (b-x)^{\alpha -2}
        = \dfrac{\Gamma(1)}{\Gamma(2-\beta)} \dfrac{(x-a)^{1-\beta}}{(b-a)^{2-\alpha}} {_{2}}{F}{_{1}} \left(1 , 2-\alpha ; 2-\beta, \dfrac{x-a}{b-a} \right)\\
        &\qquad = \dfrac{(b-a)^{\alpha-2}}{\Gamma(2-\alpha)\Gamma(\alpha -\beta)} (x-a)^{1-\beta} \int_{0}^{1} t^{1-\alpha} (1-t)^{\alpha - \beta -1} \left( 1- \frac{x-a}{b-a} t \right)^{-1} \,dt.
    \end{align*}
    Let $\gamma$ and $\mu$ be H\"older conjugates and see
    \begin{align*}
        & \int_{a}^{b} \left| (x-a)^{1-\beta} \int_{0}^{1} t^{1-\alpha} (1-t)^{\alpha - \beta -1} \left(1 - \frac{x-a}{b-a} t\right)^{-1}\,dt \right|^{q}\,dx\\
        & \leq  \int_{a}^{b} \left|\int_{0}^{1} (1-t)^{\alpha -\beta -1} (x-a)^{1 -\beta} \left( 1- \frac{x-a}{b-a} t\right)^{-1} \,dt  \right|^{q}dx\\
        &\leq \int_{a}^{b} \left( \int_{0}^{1} (1-t)^{\mu(\alpha -\beta -1)}\,dt \right)^{q/\mu} \left| \int_{0}^{1} (x-a)^{(1-\beta)\gamma} \left( 1- \frac{x-a}{b-a} t\right)^{-\gamma} \,dt \right|^{q/\gamma}\,dx \\ 
         &= C \int_{a}^{b} \left| \dfrac{(x-a)^{(1-\beta)\gamma}(a-b)}{(1-\gamma)(x-a)} \left[ \left(1-\frac{x-a}{b-a} \right)^{1-\gamma} -1 \right] \right|^{q/\gamma}\,dx\\
        &= C  \int_{a}^{b} \left| (x-a)^{(1 -\beta) \gamma -1} \left( \left(1- \frac{x-a}{b-a}\right)^{1-\gamma} -1 \right) \right| ^{q/\gamma}\,dx\\
         &= C \int_{a}^{c} \left| (x-a)^{(1-\beta)\gamma -1} \left( \left( 1-\frac{x-a}{b-a} \right)^{1-\gamma} -1 \right) \right|^{q/\gamma} \,dx \\
        &\quad + C \int_{c}^{b} \left| (x-a)^{(1-\beta)\gamma -1} \left(\left(1-\frac{x-a}{b-a} \right)^{1-\gamma} -1 \right) \right|^{q/\gamma}\,dx \\
        &\leq  C \left( \int_{a}^{c} (x-a)^{((1-\beta) \gamma -1)q/\gamma}\,dx +  \int_{c}^{b} \left(1- \frac{x-a}{b-a} \right)^{q(1-\gamma)/\gamma} \,dx\right) < \infty \\
        \end{align*} provided that $((1-\beta)\gamma -1)q/\gamma + 1 > 0$ and $q/\gamma(1-\gamma) + 1 > 0$. It is easy to verify that these conditions are satisfied under the assumptions on $\beta$ and $q$. The proof is complete. 
    \end{proof}

    \begin{corollary}
        Let $u \in L^1(\Omega)$, $0 < \alpha , \beta <1$, $1 \leq p,q<\infty$. If $\alpha > 1/p$ and $\lrDa \lrDa u \in L^{p}(\Omega)$, then $\rlDb \lrDa u \in L^{q}(\Omega)$ for all $\alpha - \beta > 1/p$ and $\beta < 1/q$.
    \end{corollary}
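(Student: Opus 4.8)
The plan is to deduce the corollary from Proposition~\ref{Directions} by a change of the unknown function that exploits the left/right symmetry of that proposition. The obstruction to quoting Proposition~\ref{Directions} directly is that the corollary's hypothesis involves the \emph{same}-sided second derivative $\lrDa\lrDa u$, whereas Proposition~\ref{Directions} begins from the \emph{mixed} one $\lrLa u=\rlDa\lrDa u$; the fix is to treat $\phi:=\lrDa u$ as the primitive object and to reconstruct an $L^1$ ``antiderivative'' of it \emph{in the opposite direction}, after which the two hypotheses become the same. I would present the case $\lrDa=\lDa$ (so $\rlDb=\rDb$); the other case is identical after interchanging the roles of $a$ and $b$.

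Concretely, I would set $\phi:=\lDa u$, which lies in $L^1(\Omega)$ (this is implicit in the assumption that $\lDa\lDa u$ exists as an $L^p$ function), and define the auxiliary function $\tilde u:=\rIa\phi$, the order-$\alpha$ right Riemann--Liouville fractional integral of $\phi$. Since fractional integration maps $L^1(\Omega)$ into $L^1(\Omega)$, one has $\tilde u\in L^1(\Omega)$, and by the ``$\mathcal D^\alpha I^\alpha=\mathrm{Id}$'' half of the Fundamental Theorem of weak Fractional Calculus (Theorem~\ref{FTWFC}) --- which carries no boundary term when the derivative and the integral are taken in the \emph{same} direction --- we get $\rDa\tilde u=\phi=\lDa u$. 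Applying $\lDa$ to this identity and recalling that $\rLa=\lDa\rDa$ (i.e.\ the definition $\lrLa=\rlDa\lrDa$ with the $+$ choice of sign) gives
\begin{align*}
  \rLa\tilde u=\lDa\rDa\tilde u=\lDa\lDa u\in L^p(\Omega).
\end{align*}
Thus $\tilde u\in L^1(\Omega)$ satisfies the hypothesis of Proposition~\ref{Directions} under the stated parameter restrictions, which therefore yields $\rDb\rDa\tilde u\in L^q(\Omega)$; since $\rDa\tilde u=\lDa u$, this is precisely the desired conclusion $\rDb\lDa u\in L^q(\Omega)$.

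Two routine points complete the argument, and none of this is a genuine obstacle. First, the corollary's parameter range is slightly wider than Proposition~\ref{Directions}'s: if $p=1$ then $\alpha>1/p$ is impossible (as $\alpha<1$), so there is nothing to prove; and if $q=1$, pick $q'\in(1,1/\beta)$ (possible since $\beta<1$), apply the case already proved to get $\rDb\lDa u\in L^{q'}(\Omega)$, and invoke $L^{q'}(\Omega)\hookrightarrow L^1(\Omega)$ on the bounded interval $\Omega$. Second, $\tilde u$ and $u$ will in general differ (by a multiple of $\kappa_{+}^{\alpha}$, the generator of $\mathcal N(\rDa)$), but this is immaterial: every fractional derivative above acts on the common function $\phi=\lDa u=\rDa\tilde u$, so no discrepancy can arise. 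The only thing requiring care in writing this up is keeping the left/right superscripts consistent and checking the two composition identities $\rDa\rIa=\mathrm{Id}$ and $\rLa=\lDa\rDa$, both immediate from Theorem~\ref{FTWFC} and the definition of the fractional Laplacian; indeed, unwinding the reduction shows that it amounts to running the proof of Proposition~\ref{Directions} with ``left'' and ``right'' interchanged throughout.
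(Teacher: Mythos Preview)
Your proposal is correct. The paper's own proof consists of a single sentence: ``The result follows by similar calculations and estimates as in the proof of Proposition~\ref{Directions}'', i.e.\ it intends the reader simply to rerun the explicit computation of that proposition with the directions (and hence the FTwFC identity) swapped. Your route is more structured: rather than recompute, you manufacture an auxiliary $\tilde u=\rIa(\lDa u)$ so that the same-sided hypothesis $\lDa\lDa u\in L^p$ becomes the mixed-sided hypothesis $\rLa\tilde u\in L^p$ for $\tilde u$, whence Proposition~\ref{Directions} applies verbatim with the ``$+$'' sign. As you yourself observe in the final paragraph, unwinding this reduction reproduces exactly the ``similar calculations'' the paper has in mind (the FTwFC applied to $\lDa u$ in the left direction, followed by the same two $J_1$, $J_2$ estimates), so the two arguments are mathematically identical; your presentation has the advantage of making the left/right symmetry explicit and of sparing the reader a repetition of the lengthy hypergeometric estimates. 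One small remark: the identity $\rDa\rIa=\mathrm{Id}$ you invoke is standard, but Theorem~\ref{FTWFC} as stated in this paper records only the reconstruction formula $u=c^{1-\alpha}_{\pm}\kappa^{\alpha}_{\pm}+\lrIa\lrDa u$, not the ``$\mathcal D^\alpha I^\alpha$'' half; you should cite \cite{Feng_Sutton1} (or derive it directly from the classical Riemann--Liouville identity and the characterization of weak derivatives) rather than Theorem~\ref{FTWFC}.
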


    \begin{proof}
    The result follows by similar calculations and estimates as in the proof of Proposition \ref{Directions}. 
    \end{proof}

\subsection{Fractional Neumann Boundary Operators and Green's Identity}\label{sec-4.3}
It is expected that the Dirichlet boundary conditions for the one-sided Poisson equations to only be given at one endpoint of the domain. This is holistically consistent with the trace concept (cf. Definition \ref{trace}) in the one-sided spaces ${^{\pm}}{W}{^{\alpha,p}}(\Omega)$, whose 
functions may be weakly singular at the other endpoint of the domain. Therefore, 
a Dirichlet boundary condition could not (and should not) be assigned there. 
This is indeed the case as to be seen in Section \ref{sec-4} and \ref{sec-5}. 
Another type of widely used boundary condition for integer order PDEs is the Neumann (or natural) 
boundary condition whose physical meaning is the prescribed normal flux. An interesting 
question is what would be the `right' fractional Neumann (or natural) boundary condition. Since fractional differential operators are nonlocal, it is not clear which fractional operator physically represents flux. In turn, that makes the identification of the fractional Neumann boundary operator a delicate and difficult task. 

The goal of this subsection is to define a fractional Neumann (or natural) boundary operator (and condition) and to show its consistency with the fractional calculus of variations. 

\begin{definition}\label{NeumannDef}
Let $u:\Ome\to \mathbb{R}$. Define the operator, 
\begin{align}\label{Neumann}
    {^{\pm}}{\mathcal{N}}{^{\alpha}_{p}} u : =  \lrT \rlIoa \left| \lrDa u \right|^{p-2} {^{\pm}}{\mathcal{D}}{^{\alpha}} u,
\end{align}
 called the left/right fractional Neumann boundary operator associated with the fractional $p$-Laplacian ${^{\pm}}{\Delta}{^{\alpha}_{p}}$. 
When, $p=2$, we write $\lrNa := {^{\pm}}{\mathcal{N}}{^{\alpha}_{2}}$. 
\end{definition}

\begin{remark}
(i) Specifically, when $p=2$ and $\Omega = (a,b)$, we have
\begin{align*}
\lNa u &= \lT \rIoa \lDa u = \bigl(\rIoa \lDa u \bigr)(b),\\
\rNa u &= \rT \lIoa \rDa u = \bigl(\lIoa \rDa \bigr)(a).
\end{align*}
Similar to the trace concept in the space ${^{\pm}}{H}{^{\alpha}}(\Omega)$, we again see a one-sided concept of the fractional Neumann boundary operator that depends on the direction of differentiation.

(ii) Again, we see a mixing of the directions each operator is taken. For example, in the left case, we take a right fractional integral on top of a left fractional derivative. Moreover, 
unlike the integer order Neumann operator which is defined by the normal derivative at the boundary, 
the fractional version relies on a mixing of two fractional operators. Neumann boundary conditions are referred to as \textit{natural boundary conditions} because they are embedded in and arise as natural consequences of the associated calculus of variations problems. This point of view will be explained with details below.

(iii) A natural question is whether the integration `undoes' the differentiation in (\ref{Neumann}). Since the order of integration does not match that of differentiation, the orders do not `cancel' and we truly have a nonlocal operator that is distinct from 
the trace operator. 
\end{remark}

The above definition of fractional Neumann boundary operators is motivated by the 
following theorem.  

\begin{theorem}\label{DeriveNeumann}
    Suppose that 
    \[
    u = \underset{v \in \lrWap}{\mbox{\rm argmin }} {^{\pm}}{E}{^{\alpha}_{p}} (v),
    \]
    then ${^{\pm}}{\mathcal{N}}{^{\alpha}_{p}} u =0$ in the distributional sense. 
\end{theorem}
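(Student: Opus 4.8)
The plan is to mimic the proof of Proposition \ref{prop3.1} but now allow test functions that do \emph{not} vanish at the boundary, so that a boundary term survives the integration-by-parts step and, upon demanding that the first variation vanish, forces the Neumann quantity to be zero. Concretely, I would fix an arbitrary $\varphi \in C^\infty(\overline\Omega)$ (not necessarily in $C^\infty_0(\Omega)$) that is admissible as a variation, i.e. stays in the Dirichlet class $\lrWap$ — note that for the left energy the natural Dirichlet constraint is only at the left endpoint (see Remark \ref{BoundaryNotation} and the discussion opening Section \ref{sec-4.3}), so $\varphi$ is unconstrained at the endpoint where the one-sided space permits a weak singularity. As in Proposition \ref{prop3.1}, set $\Phi(t) := {^{\pm}}{E}{^{\alpha}_{p}}(u + t\varphi)$, which is differentiable in $t$ since it is built from a power function, and compute
\begin{align*}
\Phi'(0) = \int_{\Omega} \bigl|\lrDa u\bigr|^{p-2}\, \lrDa u \cdot \lrDa\varphi \,dx = 0,
\end{align*}
the vanishing coming from $u$ being a minimizer over the admissible class.

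Next I would apply the integration-by-parts formula for weak fractional derivatives (the FTwFC, Theorem \ref{FTWFC}, together with the companion integration-by-parts identity in Appendix A) to move the $\lrDa$ off $\varphi$. Unlike the $C^\infty_0$ case, this transfer produces a boundary contribution: schematically, writing the left case with $\Omega=(a,b)$,
\begin{align*}
\int_a^b \bigl|\lDa u\bigr|^{p-2}\,\lDa u \cdot \lDa\varphi\,dx
= \int_a^b \rDa\Bigl(\bigl|\lDa u\bigr|^{p-2}\lDa u\Bigr)\varphi\,dx
+ \Bigl[\bigl(\rIoa |\lDa u|^{p-2}\lDa u\bigr)\varphi\Bigr]_{\text{bdry}},
\end{align*}
where the boundary term is exactly $\lT\bigl(\rIoa |\lDa u|^{p-2}\lDa u\bigr)$ times $\varphi$ at the surviving endpoint — i.e. $\bigl({^{\pm}}{\mathcal{N}}{^{\alpha}_{p}} u\bigr)\cdot \varphi$ evaluated there, by Definition \ref{NeumannDef}. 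Since $u$ is already known (from Proposition \ref{prop3.1}) to be left/right $\alpha$-harmonic, the volume integral $\int_\Omega {^{\pm}}{\Delta}{^{\alpha}_{p}} u\, \varphi\,dx$ vanishes. Hence $\Phi'(0)=0$ reduces to the statement that the boundary pairing vanishes for every admissible $\varphi$; choosing $\varphi$ with arbitrary value at the free endpoint (and, if $0<\theta<1$, splitting into the two endpoint contributions separately) yields ${^{\pm}}{\mathcal{N}}{^{\alpha}_{p}} u = 0$ in the distributional sense.

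The main obstacle is the rigorous justification of the fractional integration-by-parts step at the level of regularity available: one must verify that for $u \in \lrWap(\Omega)$ and $\varphi$ smooth up to the boundary the pairing $\int_\Omega |\lrDa u|^{p-2}\lrDa u \cdot \lrDa\varphi\,dx$ genuinely decomposes as a distributional pairing of ${^{\pm}}{\Delta}{^{\alpha}_{p}} u$ against $\varphi$ plus a well-defined boundary term, and that the trace $\lrT \rlIoa(|\lrDa u|^{p-2}\lrDa u)$ making up ${^{\pm}}{\mathcal{N}}{^{\alpha}_{p}} u$ actually exists — this is where the mapping properties of $\rlIoa$ on $L^p$ (the $W^{\alpha,p}$-to-$C^{\alpha-1/p}$ type smoothing, cf. \cite[Theorem 3.6]{Samko} used in Proposition \ref{Directions}) and the trace theory of Definition \ref{trace} must be invoked. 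A secondary subtlety is clarifying the admissible variation class so that ``$\Phi'(0)=0$'' is legitimate: one needs $u+t\varphi \in \lrWap$ for $t$ near $0$, which is automatic here because $\lrWap$ is a linear space containing $C^\infty(\overline\Omega)$, but one should remark on why the Dirichlet constraint (if any) is preserved. Once these points are in place, the argument is a direct computation exactly parallel to the classical derivation of natural boundary conditions in the calculus of variations.
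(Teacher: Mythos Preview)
Your proposal is correct and follows essentially the same route as the paper: take $\varphi\in C^\infty(\overline\Omega)$, set the first variation to zero, integrate by parts to isolate a volume term $\int_\Omega {^\pm}{\Delta}^\alpha_p u\,\varphi\,dx$ plus the boundary pairing ${^\pm}{\mathcal{N}}^\alpha_p u\cdot\lrT\varphi$, and conclude both vanish. The paper resolves the regularity obstacle you flag by first reducing to smooth $u$ via density, and then carrying out the integration by parts explicitly via the identity $\lrDa\varphi=\Gamma(1-\alpha)^{-1}\,\rlT\varphi\,\kappa^{-1-\alpha}_{\pm}+\lrIoa\varphi'$ for smooth $\varphi$, followed by the duality $\int\psi\,\lrIoa\varphi'=\int(\rlIoa\psi)\varphi'$ and an ordinary integration by parts in $\varphi'$---so no abstract fractional Green's formula needs to be invoked.
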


\begin{proof}
	The proof is similar to that of Proposition \ref{prop3.1}.  It suffices to assume that $u$ is smooth due to the density property of $\lrWap$-functions. 
	
	Let $v \in C^{\infty}(\overline{\Omega})$. It follows by the minimizer assumption and taking the first variation of ${^{\pm}}{E}{^{\alpha}_{p}}$ that 
    \begin{align*}
        0 &= \int_{\Omega} |\lrDa u|^{p-2} \lrDa u \lrDa v \,dx \\
        &= \int_{\Omega} |\lrDa u |^{p-2} \lrDa u \left( \frac{1}{\Gamma(1-\alpha)} \frac{\rlT v}{\kappa^{1+\alpha}_{\pm}} + \lrIoa v' \right) \,dx \\ 
        &= \int_{\Omega} |\lrDa u |^{p-2} \lrDa u \left( \dfrac{1}{\Gamma(1-\alpha)} \frac{\rlT v}{\kappa^{1+\alpha}_{\pm}}\right) + \rlIoa |\lrDa u|^{p-2} \lrDa u v' \,dx \\ 
        &= \int_{\Omega} |\lrDa u |^{p-2} \lrDa u \left( \frac{1}{\Gamma(1-\alpha)} \frac{\rlT v}{\kappa^{1+\alpha}_{\pm}}\right) + \rlDa |\lrDa u |^{p-2} \lrDa u v\,dx \\ 
        &\quad + \lrT \rlIoa |\lrDa u |^{p-2} \lrDa u  \lrT v - \rlT \rlIoa |\lrDa u |^{p-2} \lrDa u  \lrT v\\
        &= \int_{\Omega} {^{\pm}}{\Delta}{^{\alpha}_{p}} u v\,dx 
        + \lrT \rlIoa |\lrDa u |^{p-2} \lrDa u \lrT v\\
        &= : \int_{\Omega} {^{\pm}}{\Delta}{^{\alpha}_{p}} u v\,dx + {^{\pm}}{\mathcal{N}}{^{\alpha}_{p}} u\, \lrT v.
    \end{align*}
    Here we have used the following identity to obtain the second to last equality  
    \begin{align*}
        \int_{\Omega} |\lrDa u |^{p-2} \lrDa u \left( \frac{1}{\Gamma(1-\alpha)} \frac{\rlT v}{\kappa^{1+\alpha}_{\pm}}\right) \,dx = \rlT \rlIoa |\lrDa u |^{p-2} \lrDa u  \lrT v.
    \end{align*}
    Hence,  ${^{\pm}}{\Delta}{^{\alpha}_{p}} u = 0$ and ${^{\pm}}{\mathcal{N}}{^{\alpha}_{p}} u=0$ in the distributional sense.  The proof is complete.
 
\end{proof}

The above proof also infers the following useful result.

\begin{corollary}\label{cor-3.2}
	There holds the following  fractional Green's identity 
	for the fractional $p$-Laplacian ${^\pm}{\Delta}^\alpha_p$:
	\begin{align}
	\int_{\Omega} |\lrDa u|^{p-2} \lrDa u \lrDa v \,dx 
	= \int_{\Omega} {^\pm}{\Delta}^\alpha_p u \cdot v \,dx + {^\pm}{\mathcal{N}}^\alpha_p u  \lrT v 
	\end{align}
	when $u$ and $v$ are appropriately chosen.
\end{corollary}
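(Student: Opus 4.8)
The plan is to extract the Green's identity directly from the computation performed in the proof of Theorem~\ref{DeriveNeumann}, observing that the minimizer hypothesis was never actually needed for the algebraic manipulations — only for concluding that the left-hand side vanishes. So the first step is to revisit that chain of equalities, but now starting from $\int_{\Omega} |\lrDa u|^{p-2}\lrDa u\,\lrDa v\,dx$ for general (sufficiently regular) $u$ and $v$, rather than for a minimizer tested against an arbitrary variation. Concretely, I would take $u$ such that $|\lrDa u|^{p-2}\lrDa u$ has enough regularity for $\rlIoa$ of it to admit a trace at the relevant endpoint and for $\rlDa$ of it to make sense (e.g.\ $u\in C^\infty(\overline\Omega)$, or more generally $u$ in the domain of $\lrLa$ with the flux quantity in an appropriate one-sided Sobolev space), and $v\in C^\infty(\overline\Omega)$ (or $v$ in the space where $\lrT v$ is defined).

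The key steps, in order, are: (1) decompose the test derivative $\lrDa v$ using the FTwFC-type splitting $\lrDa v = \frac{1}{\Gamma(1-\alpha)}\frac{\rlT v}{\kappa^{1+\alpha}_\pm} + \lrIoa v'$, exactly as in the proof of Theorem~\ref{DeriveNeumann}; (2) move the fractional integral operator off $v'$ and onto $|\lrDa u|^{p-2}\lrDa u$ via the integration-by-parts/adjointness relation between $\lrIoa$ and $\rlIoa$ (the same step that converts $\int |\lrDa u|^{p-2}\lrDa u\,\lrIoa v'\,dx$ into $\int \rlIoa|\lrDa u|^{p-2}\lrDa u\,v'\,dx$); (3) perform a classical integration by parts in $v'$, which produces the bulk term $\int_\Omega \rlDa(|\lrDa u|^{p-2}\lrDa u)\,v\,dx = \int_\Omega {^\pm}{\Delta}^\alpha_p u\cdot v\,dx$ plus boundary terms $\lrT\rlIoa|\lrDa u|^{p-2}\lrDa u\,\lrT v - \rlT\rlIoa|\lrDa u|^{p-2}\lrDa u\,\rlT v$; (4) identify the remaining singular-kernel term $\int_\Omega |\lrDa u|^{p-2}\lrDa u\cdot\frac{1}{\Gamma(1-\alpha)}\frac{\rlT v}{\kappa^{1+\alpha}_\pm}\,dx$ with $\rlT\rlIoa|\lrDa u|^{p-2}\lrDa u\,\lrT v$ (the identity flagged in that proof), so that this term cancels the $\rlT$-boundary term from step~(3); (5) collect what survives: the bulk integral against ${^\pm}{\Delta}^\alpha_p u$ and the single boundary term $\lrT\rlIoa|\lrDa u|^{p-2}\lrDa u\,\lrT v = {^\pm}{\mathcal{N}}^\alpha_p u\,\lrT v$, which is precisely the claimed identity.

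The main obstacle is making precise the phrase ``when $u$ and $v$ are appropriately chosen'' — i.e.\ pinning down the minimal regularity and decay hypotheses under which every term above is finite and the manipulations are legitimate. In particular one must ensure: that $|\lrDa u|^{p-2}\lrDa u$ lies in the one-sided space $\rlWap$ (or at least that $\rlIoa$ of it is absolutely continuous up to the endpoint) so its trace and its weak $\rlDa$-derivative exist; that the singular integral in step~(4) converges, which is where the trace $\rlT v$ and the blow-up rate of $1/\kappa^{1+\alpha}_\pm$ interact; and that the adjointness relation for $\lrIoa/\rlIoa$ applies, which needs $v'\in L^{p'}$-type integrability against $|\lrDa u|^{p-2}\lrDa u\in L^{p/(p-1)}$ by H\"older (consistent with ${^\pm}{E}^\alpha_p$ being finite on $\lrWap$). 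Since all of these are established ingredients from Appendix~A and from \cite{Feng_Sutton1,Feng_Sutton2}, and since the formal computation is already contained verbatim in the proof of Theorem~\ref{DeriveNeumann}, I expect the proof of the corollary to be short: state the regularity assumptions on $u,v$ (e.g.\ $u\in C^\infty(\overline\Omega)$ with $v\in C^\infty(\overline\Omega)$ for the clean version, with a remark on the weakest admissible classes), then invoke the displayed chain of equalities from that proof with the minimizer step omitted.
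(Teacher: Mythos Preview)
Your proposal is correct and matches the paper's approach exactly: the paper simply states that the corollary is inferred from the proof of Theorem~\ref{DeriveNeumann}, and you have correctly identified that the displayed chain of equalities there \emph{is} the Green's identity once you drop the minimizer hypothesis (which was only used to set the left-hand side to zero). Your discussion of the regularity needed to make ``appropriately chosen'' precise goes beyond what the paper provides, but is consistent with its intent.
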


\begin{remark}
	(i) The validity of the above Green's 
	identity shows that both the trace operator $\lrT$ and the fractional Neumann 
	operator ${^{\pm}}{\mathcal{N}}{^{\alpha}_{p}}$ are good and natural generalizations of 
	their integer counterparts, for the one-sided fractional operators.
	
	(ii) In the literature (cf. \cite{Wang} and the references therein),  
	${^{\mp}}{T} {^{\pm}}{\mathcal{D}}{^{\alpha}} u$ is also defined as a fractional Neumann boundary condition by mimicking the integer order operator. However, $\rlT \lrDa u = 0$ is not equivalent to ${^{\pm}}{\mathcal{N}}{^{\alpha}_{p}}u = 0$. To see this point, set $u = {^{\pm}}{I}{^{\alpha}} c$ for $c > 0$, then
    \begin{align*}
        {^{\pm}}{\mathcal{N}}{^{\alpha}_{p}} u &: = \lrT \rlIoa |\lrDa u|^{p-2} \lrDa u 
        = \lrT \rlIoa  c^{p-1}
        = c^{p-1} \lrT \kappa^{-\alpha}_{\mp} 
        = 0.
    \end{align*}
    However, it follows from the FTwFC (cf. Theorem \ref{FTWFC}) that $\lrDa u =c$, which implies
     that $\rlT \lrDa u > 0$. Hence, these two conditions are not equivalent. Therefore, 
     defining $\rlT \lrDa$ as a fractional Neumann boundary operator is inconsistent with the 
     embedded natural boundary condition from the fractional calculus of variations. 
\end{remark}
 
Here we see that in the same spirit as the fractional Laplacian, the fractional Neumann boundary operator can be obtained via the calculus of variations arguments. Moreover, prescribing a fractional Neumann boundary condition for a given Euler-Lagrange (fractional differential) equation is equivalent to considering a fractional calculus of variations problem with the natural boundary condition. This equivalence may not be true for other definitions of fractional Neumann boundary operators proposed in the literature. 

\section{Fractional Calculus of Variations via Direct Method}\label{sec-4}

In this section, we consider the general fractional calculus of variations problem 
(\ref{FundamentalMin}). Our goal is to establish the existence of minimizers under some 
structure conditions on the density function $L_{p,\theta,\lambda}$ using the 
direct method (cf. \cite{Dacorogna, Evans}).   


We first take a closer look at the meanings of the three subscripts on $L_{p,\theta,\lambda}$. 
The parameter $p$ is obvious, which is an index inherited from the fractional 
Sobolev space $\lrWap$.  
The parameter $\theta \in [0,1]$ can be thought of as a linear weight (or selector parameter) between the left and right fractional derivatives. That is, $\theta$ controls the symmetry of $L_{p,\theta,\lambda}$ with respect to left and right fractional differentiation. 
The last parameter $\lambda= 0$ or $1$ characterizes the role of a zero order term of $v$.
In particular, $\lambda=0$ indicates that $L_{p,\theta,\lambda}$ does not depend 
on $v$ explicitly. 
Therefore, we may assume that $L_{p,\theta,\lambda}$ has the following form:
\begin{align}\label{eq4.1}
L_{p,\theta,\lambda} (\lDa v, \rDa v,v,x) = L_p\bigl( (1-\theta)\lDa v , \theta \rDa v, \lambda v, x\bigr)
\end{align}
for some function $L_p: \mathbb{R}^3\times \Omega\to \mathbb{R}$. 
It is clear that if $\theta \in (0,1)$, then $L_p$ depends on both $\lDa v$ and $\rDa v$, 
but when $\theta=0$ or $1$, $L_p$ depends only on one of them. This situation leads to
so-called  one-sided $2\alpha$-order fractional differential equations
to which there is no integer order counterparts. 
We also remark that the special case when $\theta =1/2$, $\lambda=1$, and $L_p$ depends on $\lDa v$ and $\rDa v$ indirectly via their arithmetic average $\mathcal{D}^\alpha v$. That is,
\[
L_{p,\theta,\lambda}(\lDa v, \rDa v,v,x) = L_p\bigl(\mathcal{D}^\alpha  v, v, x\bigr).
\]
We shall consider this special case separately in Section \ref{sec-5.5} since it gives rise to a fundamentally different problem.

In the remainder of this section, we shall study the existence of minimizers to (\ref{FundamentalMin}) under some suitable structure conditions on the Lagrangian \eqref{eq4.1}. Before stating such a result, we first prove a sufficient condition for  $L_{p,\theta,\lambda}$ 
to be weak lower semicontinuous; a familiar component from the study of the Calculus 
of Variations (cf. \cite{Evans, Dacorogna}).

\begin{proposition}\label{LowerSemiContinuity}
	Assume that $L_{p,\theta,\lambda}: \R^3\times \Omega \to \R$ is smooth, bounded from below, and convex in its first two arguments. Moreover, there exists two smooth functions $L^1_{p,\theta,\lambda},L^2_{p,\theta,\lambda} : \R^2 \times \Omega \to \R$ such that 
	\begin{align} \label{structure1}
	 \frac{\partial}{\partial a_1}  L_{p,\theta,\lambda}(\lDa v,\rDa v , v,x) &= L^{1}_{p,\theta,\lambda}(\lDa v , v,x) \\ 
	\frac{\partial}{\partial a_2} L_{p,\theta,\lambda}(\lDa v,\rDa v , v,x) &= L^{2}_{p,\theta,\lambda}(\rDa v, v,x), \label{structure2}
	\end{align}
	where $\frac{\p }{\p a_i}L_{p,\theta,\lambda} \, (i=1,2)$ stands for the partial 
	derivative of $L_{p,\theta,\lambda}$ with respect to the $i$th argument. 
	Then the energy functional $\mathcal{E}^\alpha_{p,\theta,\lambda}$ is weakly lower semicontinuous 
	on $\mathcal{W}^{\alpha,p}_{\theta,\lambda}$. 
\end{proposition}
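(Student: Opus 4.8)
The plan is to follow the standard pattern from the direct method: take a weakly convergent sequence $v_n \rightharpoonup v$ in $\mathcal{W}^{\alpha,p}_{\theta,\lambda}$ and show $\mathcal{E}^\alpha_{p,\theta,\lambda}(v) \leq \liminf_{n\to\infty} \mathcal{E}^\alpha_{p,\theta,\lambda}(v_n)$. Weak convergence in $\mathcal{W}^{\alpha,p}_{\theta,\lambda}$ means (by the definition \eqref{general_spaces_2} of the space and its norm) that $\lDa v_n \rightharpoonup \lDa v$ and $\rDa v_n \rightharpoonup \rDa v$ weakly in $L^p(\Omega)$, together with $v_n \rightharpoonup v$ weakly in $L^p(\Omega)$; by compact embedding of the fractional Sobolev space into $L^p$ (Appendix B), we may also assume $v_n \to v$ strongly in $L^p(\Omega)$ and pointwise a.e. The boundedness from below of $L_{p,\theta,\lambda}$ lets us subtract a constant and assume, after passing to a subsequence that realizes the $\liminf$, that $\mathcal{E}^\alpha_{p,\theta,\lambda}(v_n)$ converges.

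First I would establish the convexity-plus-lower-order-perturbation inequality pointwise: for fixed $x$, writing $a=(a_1,a_2)$ for the first two arguments and using smoothness and convexity of $L_{p,\theta,\lambda}(\cdot,\cdot,v,x)$ in $(a_1,a_2)$, the tangent-plane inequality gives
\begin{align*}
L_{p,\theta,\lambda}(\lDa v_n,\rDa v_n,v_n,x)
&\geq L_{p,\theta,\lambda}(\lDa v,\rDa v,v_n,x)\\
&\quad + L^1_{p,\theta,\lambda}(\lDa v,v_n,x)\bigl(\lDa v_n-\lDa v\bigr)\\
&\quad + L^2_{p,\theta,\lambda}(\rDa v,v_n,x)\bigl(\rDa v_n-\rDa v\bigr),
\end{align*}
where the structure hypotheses \eqref{structure1}--\eqref{structure2} are exactly what make the gradient of $L_{p,\theta,\lambda}$ in its first two slots split into the two separate functions $L^i_{p,\theta,\lambda}$ that depend on only one fractional derivative each. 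Integrating over $\Omega$, the first term on the right converges to $\mathcal{E}^\alpha_{p,\theta,\lambda}(v)$ by dominated convergence (using $v_n\to v$ a.e., continuity of $L_{p,\theta,\lambda}$, and a uniform bound), so it remains to show the two linear correction integrals tend to zero.

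The main obstacle — and the place the argument genuinely uses the hypotheses — is precisely these two cross terms, e.g. $\int_\Omega L^1_{p,\theta,\lambda}(\lDa v,v_n,x)\bigl(\lDa v_n-\lDa v\bigr)\,dx$. Here $\lDa v_n-\lDa v \rightharpoonup 0$ weakly in $L^p$, so it would suffice to know the coefficient $L^1_{p,\theta,\lambda}(\lDa v,v_n,x)$ converges strongly in $L^{p'}(\Omega)$; this follows from $v_n\to v$ strongly in $L^p$ (hence a.e. along a subsequence), continuity of $L^1_{p,\theta,\lambda}$, and a growth/domination bound on $L^1_{p,\theta,\lambda}$ — which one gets from the assumed smoothness together with the convexity (convex functions have locally bounded gradients, and on the relevant data one extracts an $L^{p'}$-dominating function). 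Combining the strong-times-weak convergence (a generalized dominated-convergence/Egorov argument), both cross terms vanish, and taking $\liminf$ in the integrated inequality yields $\mathcal{E}^\alpha_{p,\theta,\lambda}(v)\leq\liminf_n\mathcal{E}^\alpha_{p,\theta,\lambda}(v_n)$, which is the claim. I would note that if one only assumes $v_n\rightharpoonup v$ in the fractional space without the compact embedding, one must instead route through Mazur's lemma on convex combinations, but the compact embedding into $L^p$ (available here since $\Omega$ is bounded) makes the direct argument above cleaner.
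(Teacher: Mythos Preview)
Your overall strategy is the same as the paper's: pass to a subsequence realizing the $\liminf$, use the compact embedding $\mathcal{W}^{\alpha,p}_{\theta,\lambda}\hookrightarrow\hookrightarrow L^p(\Omega)$ (Lemma~\ref{Precompact}) to upgrade $v_n\to v$ to strong $L^p$ and a.e.\ convergence, apply the tangent-plane inequality from convexity, and show the linear correction terms vanish by weak-times-strong convergence. The structure conditions \eqref{structure1}--\eqref{structure2} are used in exactly the way you describe.

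However, there is a genuine technical gap in how you pass to the limit. You integrate the pointwise convexity inequality over all of $\Omega$ and then claim (i) $\int_\Omega L_{p,\theta,\lambda}(\lDa v,\rDa v,v_n,x)\,dx\to\mathcal{E}^\alpha_{p,\theta,\lambda}(v)$ by dominated convergence with ``a uniform bound'', and (ii) $L^1_{p,\theta,\lambda}(\lDa v,v_n,x)\to L^1_{p,\theta,\lambda}(\lDa v,v,x)$ strongly in $L^{p'}(\Omega)$ from ``smoothness together with convexity''. Neither follows from the hypotheses: no growth condition on $L_{p,\theta,\lambda}$ or its partials is assumed, and since $\lDa v,\rDa v$ are only in $L^p(\Omega)$ (not $L^\infty$), smoothness and convexity alone give you no global integrable dominator. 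The local boundedness of gradients of convex functions does not yield an $L^{p'}$ majorant on the set where $|\lDa v|$ is large.

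The paper repairs this by restricting \emph{before} integrating: using $L_{p,\theta,\lambda}\geq 0$, one drops the complement and works on $V_\varepsilon:=\Omega_\varepsilon\cap U_\varepsilon$, where $\Omega_\varepsilon$ comes from Egorov's theorem (uniform convergence of $v_k$) and $U_\varepsilon:=\{|v|+|\lDa v|+|\rDa v|\leq 1/\varepsilon\}$. On $V_\varepsilon$ all arguments of $L_{p,\theta,\lambda}$ and $L^i_{p,\theta,\lambda}$ stay in a fixed compact set, so smoothness gives uniform convergence of $L^i_{p,\theta,\lambda}(\lDa v,v_k,x)$ and of the main term; weak convergence of $\lrDa v_k$ then kills the cross terms. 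One obtains $\ell\geq\int_{V_\varepsilon}L_{p,\theta,\lambda}(\lDa v,\rDa v,v,x)\,dx$ for every $\varepsilon>0$, and monotone convergence (again using $L\geq 0$) sends $\varepsilon\to 0$. You allude to Egorov in passing, but the essential point is that the truncation must happen before you integrate the convexity inequality, not after.
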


\begin{proof} 
	Let $\{v_k\}_{k=1}^{\infty} \subset \mathcal{W}^{\alpha,p}_{\theta,\lambda}$ and $v_k \rightharpoonup v$ 
	in $\mathcal{W}^{\alpha,p}_{\theta,\lambda}$ and set $$\ell := \underset{k \rightarrow \infty}{\mbox{liminf}}\, \mathcal{E}^\alpha_{p,\theta,\lambda}(v_k).$$
	 We want to show that $\mathcal{E}^\alpha_{p,\theta,\lambda}(v) \leq \ell$. 
	 
	 Since $v_k \rightharpoonup v$, it follows that $\{v_k\}_{k=1}^{\infty}$ is a bounded sequence. Hence, there exists $M >0$ so that $\underset{k} \sup \|v_k\|_{\mathcal{W}^{\alpha,p}_{\theta,\lambda}} \leq M$. Passing to a subsequence, without relabeling, $\ell = \underset{k \rightarrow \infty}{\lim} \mathcal{E}^\alpha_{p,\theta,\lambda}(v_k)$. By a precompactness result (cf. Lemma \ref{Precompact}), $\mathcal{W}^{\alpha,p}_{\theta,\lambda}\subset \subset L^{p}(\Omega)$. It follows that $v_{k} \rightarrow v$ in $L^{p}(\Omega)$. For yet another subsequence, without relabeling, $v_k \rightarrow v$ a.e. in $\Omega$. 
	
	Fix $\eps > 0$. Since $v_k \rightarrow v$ a.e. in $\Omega$, by Egorov's theorem, there exists $\Omega_{\eps} \subset \Omega$ so that $|\Omega \setminus \Omega_{\eps} | < \eps$ and $v_k \rightarrow v$ uniformly on $\Omega_{\eps}$. Assume that $\Omega_{\eps} \subset \Omega_{\eps}' \subset \Omega$ for $0 < \eps ' < \eps$. Define 
	\begin{align*}
	U_{\eps} := \bigl\{ x \in \Omega \, : \, |v| + |{^{-}}{\mathcal{D}}{^{\alpha}} v| +  |{^{+}}{\mathcal{D}}{^{\alpha}} v| \leq 1/\eps \bigr\}.
	\end{align*}
	Then $|\Omega \setminus U_{\eps}| \rightarrow 0$ as $\eps \rightarrow 0$. Set $V_{\eps} := \Omega_{\eps} \cap U_{\eps}$ and note that since $|\Omega \setminus \Omega_{\eps}| < \eps$ and $|\Omega \setminus U_{\eps}| \rightarrow 0$ as $\eps \rightarrow 0$, this implies that $|\Omega \setminus V_{\eps} | \rightarrow 0$ as $\eps \rightarrow 0$.
	
	Recall that $L_{p,\theta,\lambda}$ is bounded from below.
	Without loss of generality, we assume $L_{p,\theta,\lambda} \geq 0$; otherwise we repeat this argument for $L_{p,\theta,\lambda} + C$ for sufficiently large constant $C>0$. It follows from  the convexity of $L_{p,\theta,\lambda}$ that 
	\begin{align*}
	&\mathcal{E}^\alpha_{p,\theta,\lambda}(v_k)
	 = \int_{\Omega} L_{p,\theta,\lambda}(\lDa v_k , \rDa v_k , v_k , x) \,dx \\ 
	&\quad \geq \int_{V_{\eps}} L_{p,\theta,\lambda}(\lDa v_k , \rDa v_k , v_k , x) \,dx \\ 
	&\quad \geq \int_{V_{\eps}} L_{p,\theta,\lambda} (\lDa v , \rDa v_k, v_k ,x)\,dx\\
	&\qquad \quad + \int_{V_{\eps}} \frac{\partial }{\partial a_1} L_{p,\theta,\lambda}(\lDa v, \rDa v_k , v_k , x)   \lDa (v_k - v)\,dx\\
	&\quad \geq \int_{V_{\eps}} L_{p,\theta,\lambda}(\lDa v ,\rDa v , v_k,x) \,dx 
	+ \int_{ V_{\eps} } L^{1}_{p,\theta,\lambda} (\lDa v ,v_{k},x) \lDa (v_k -v)\,dx\\
	&\quad \quad + \int_{ V_{\eps} } \frac{\partial}{\partial a_2} L_{p,\theta,\lambda} (\lDa v , \rDa v, v_k , x) \rDa (v_k - v)\,dx \\
	&\quad = \int_{V_{\eps}} L_{p,\theta,\lambda}(\lDa v ,\rDa v , v_k,x) \,dx 
	 +  \int_{ V_{\eps} } L^{1}_{p,\theta,\lambda} (\lDa v ,v_{k},x)  \lDa (v_k - v)\,dx\\
	&\qquad \quad + \int_{ V_{\eps} } L^2_{p,\theta,\lambda}(\rDa v , v_k ,x)  \rDa (v_k - v)\,dx.
	\end{align*}
	By the uniform convergence on $V_{\eps} \subset \Omega_{\eps}$,
	\begin{align*}
	\lim_{k \rightarrow \infty} \int_{V_{\eps}} L_{p,\theta,\lambda} (\lDa v , \rDa v , v_k ,x)\,dx = \int_{V_{\eps}} L_{p,\theta,\lambda} (\lDa v , \rDa v , v ,x)\,dx.
	\end{align*}
	Moreover, since 
	\begin{align*}
	L^{1}_{p,\theta,\lambda}({^{-}}{\mathcal{D}}{^{\alpha}} v ,v_{k} ,x) \to L^{1}_{p,\theta,\lambda} (\lDa v , v,x ), \\
	L^{2}_{p,\theta,\lambda}(\rDa v ,v_{k} ,x) \to L^{2}_{p,\theta,\lambda} (\rDa v , v,x )
	\end{align*}
	uniformly on $V_{\eps}$ and $\lrDa v_k \rightharpoonup \lrDa v$ in $L^{p}(\Omega)$ we have  
	\begin{align*}
	\lim_{k \rightarrow \infty} \int_{V_{\eps}} L^{1}_{p,\theta,\lambda} (\lDa v , v_k ,x)  (\lDa v_k - \lDa v) \,dx &= 0,\\
	\lim_{k \rightarrow \infty} \int_{V_{\eps}} L^{2}_{p,\theta,\lambda} (\rDa v , v_k ,x)  (\rDa  v_k - \rDa v) \,dx &= 0.
	\end{align*}
	Thus,
	\begin{align*}
	\ell = \lim_{k \rightarrow \infty} \mathcal{E}^\alpha_{p,\theta,\lambda}(v_k) \geq \int_{V_{\eps}} L_{p,\theta,\lambda} (\lDa v , \rDa v , v, x )\,dx \qquad \forall \eps > 0.
	\end{align*}
	Finally, it follows from the monotone convergence theorem that 
	\begin{align*}
	\ell \geq \int_{\Omega} L_{p,\theta,\lambda} (\lDa v , \rDa v , v, x )\,dx = \mathcal{E}^\alpha_{p,\theta,\lambda} (v).
	\end{align*}
	This completes the proof.
\end{proof}

\begin{remark}
	The structure conditions \eqref{structure1} and \eqref{structure2} imply that $L_{p,\theta,\lambda}$
	does not contain product terms of $\lDa v$ and $\rDa v$. 
\end{remark}

To ensure the existence of minimizers, we need the following assumption: there exists $c_0 > 0$ and $c_1 \geq 0$ so that
\begin{align}\label{LagrangeCoercive}
L_{p,\theta,\lambda} (\lDa v ,\rDa v ,v,x) \geq \dfrac{c_0}{2} \left(|\lDa v|^{p}+ |\rDa v|^{p}+ |v|^{p}\right) - c_1.
\end{align}
The above assumption ensures that the energy functional $\mathcal{E}^\alpha_{p,\theta,\lambda}$ satisfies the following \textit{coercive condition}:
\begin{align}\label{EnergyCoercive}
\mathcal{E}^\alpha_{p,\theta,\lambda} (v) \geq c_0  \|v\|_{\mathcal{W}^{\alpha, p}_{\theta,\lambda}(\Omega)}^{p} - c_1 |\Omega|.  
\end{align}

We now are ready to state and prove the desired existence theorem. 

\begin{theorem}\label{Existence}
	Assume that $L_{p,\theta,\lambda}$ satisfies the conditions of Proposition \ref{LowerSemiContinuity} and the coercive condition (\ref{LagrangeCoercive}). 
	Then there exists $u \in {_{0}}\mathcal{{W}}^{\alpha,p}_{\theta,\lambda}$ which solves  problem (\ref{FundamentalMin}).
\end{theorem}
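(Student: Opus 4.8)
The plan is to apply the \emph{direct method of the calculus of variations}, following the standard template (cf. \cite{Evans, Dacorogna}) but carried out in the fractional Sobolev space ${_{0}}\mathcal{W}^{\alpha,p}_{\theta,\lambda}$. First I would set $m := \inf_{v \in {_{0}}\mathcal{W}^{\alpha,p}_{\theta,\lambda}} \mathcal{E}^\alpha_{p,\theta,\lambda}(v)$ and observe that $m$ is finite: it is bounded below by $-c_1|\Omega|$ thanks to the coercivity estimate \eqref{EnergyCoercive}, and it is bounded above because ${_{0}}\mathcal{W}^{\alpha,p}_{\theta,\lambda}$ is nonempty (e.g. $0$ lies in it, or one takes any admissible competitor) and $L_{p,\theta,\lambda}$ is smooth, hence finite on any fixed function. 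Then I would take a minimizing sequence $\{u_k\} \subset {_{0}}\mathcal{W}^{\alpha,p}_{\theta,\lambda}$ with $\mathcal{E}^\alpha_{p,\theta,\lambda}(u_k) \to m$.

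Next, I would extract a weak limit. Since $\mathcal{E}^\alpha_{p,\theta,\lambda}(u_k)$ is bounded (it converges), the coercivity inequality \eqref{EnergyCoercive} gives
\begin{align*}
\|u_k\|^p_{\mathcal{W}^{\alpha,p}_{\theta,\lambda}} \leq \frac{1}{c_0}\Bigl(\mathcal{E}^\alpha_{p,\theta,\lambda}(u_k) + c_1|\Omega|\Bigr) \leq C,
\end{align*}
so $\{u_k\}$ is bounded in $\mathcal{W}^{\alpha,p}_{\theta,\lambda}$. Since $1 < p < \infty$, this space is reflexive (being, by \eqref{general_spaces_2}, one of $\lWap$, $\rWap$, $W^{\alpha,p}$, or their subspaces with the vanishing-coefficient constraint, each of which is a closed subspace of a reflexive space and hence reflexive), so after passing to a subsequence, without relabeling, $u_k \rightharpoonup u$ in $\mathcal{W}^{\alpha,p}_{\theta,\lambda}$ for some $u$. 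I would then note that $u$ actually lies in the \emph{closed} subspace ${_{0}}\mathcal{W}^{\alpha,p}_{\theta,\lambda}$: the trace constraints defining it ($(1-\theta)\,\lT u = 0$ and $\theta\,\rT u = 0$) are given by bounded linear functionals (the trace operators of Definition \ref{trace} are continuous on $\lrWap$ when $\alpha p > 1$), so the constraint set is weakly closed, and a weakly convergent sequence inside it has its limit inside it as well. (If $\alpha p \leq 1$ the trace constraints are vacuous and ${_{0}}\mathcal{W}^{\alpha,p}_{\theta,\lambda} = \mathcal{W}^{\alpha,p}_{\theta,\lambda}$, so this step is trivial.)

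Finally, I would invoke weak lower semicontinuity. By Proposition \ref{LowerSemiContinuity}, the hypotheses on $L_{p,\theta,\lambda}$ (smooth, bounded below, convex in its first two arguments, with the split-derivative structure \eqref{structure1}--\eqref{structure2}) guarantee that $\mathcal{E}^\alpha_{p,\theta,\lambda}$ is weakly lower semicontinuous on $\mathcal{W}^{\alpha,p}_{\theta,\lambda}$, hence
\begin{align*}
\mathcal{E}^\alpha_{p,\theta,\lambda}(u) \leq \liminf_{k\to\infty} \mathcal{E}^\alpha_{p,\theta,\lambda}(u_k) = m.
\end{align*}
Since $u \in {_{0}}\mathcal{W}^{\alpha,p}_{\theta,\lambda}$ and $m$ is the infimum over that space, the reverse inequality $\mathcal{E}^\alpha_{p,\theta,\lambda}(u) \geq m$ holds as well, so $\mathcal{E}^\alpha_{p,\theta,\lambda}(u) = m$ and $u$ solves \eqref{FundamentalMin}. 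The main obstacle I anticipate is not any single estimate but the bookkeeping of the two-parameter family of spaces: one must check reflexivity and weak closedness of the admissible set uniformly across all the cases in \eqref{general_spaces_2} (in particular the subspaces $\lcWap$, $\rcWap$ with the extra coefficient constraint $c^{1-\alpha}_\mp = 0$, which must also be seen to be weakly closed), and one must be careful that the coercivity constant $c_0$ in \eqref{EnergyCoercive} genuinely controls the correct norm $\|\cdot\|_{\mathcal{W}^{\alpha,p}_{\theta,\lambda}}$ in each case — this is exactly where the passage from \eqref{LagrangeCoercive} to \eqref{EnergyCoercive} is used and should be justified by the equivalence of $|\lrDa v|_{L^p} + |v|_{L^p}$ with the full fractional Sobolev norm established in \cite{Feng_Sutton1, Feng_Sutton2}.
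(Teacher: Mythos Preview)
Your proposal is correct and follows essentially the same direct-method template as the paper: set the infimum, take a minimizing sequence, use coercivity \eqref{EnergyCoercive} to get boundedness, extract a weakly convergent subsequence by reflexivity, verify the limit stays in ${_{0}}\mathcal{W}^{\alpha,p}_{\theta,\lambda}$, and conclude via Proposition~\ref{LowerSemiContinuity}. The only cosmetic difference is that the paper justifies weak closedness of ${_{0}}\mathcal{W}^{\alpha,p}_{\theta,\lambda}$ by invoking Mazur's theorem (closed convex sets are weakly closed), whereas you argue directly via continuity of the trace functionals; both are valid and amount to the same thing.
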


\begin{proof}
	Let $$M := \underset{v \in {_{0}}\mathcal{{W}}^{\alpha,p}_{\theta,\lambda}}{\inf} \, \mathcal{E}^\alpha_{p,\theta,\lambda}(v).$$ 
	Assume $M < \infty$, otherwise, the assertion is trivially true. The coercivity 
	assumption also implies that $M>-c_1|\Omega|$.
	Choose a minimizing sequence $\{v_{k}\}_{k=1}^{\infty} \subset {_{0}}\mathcal{{W}}^{\alpha,p}_{\theta,\lambda}$ such that $\mathcal{E}^\alpha_{p,\theta,\lambda}(v_k) \rightarrow M$ as $k \rightarrow \infty$. 
	It follows from the coercivity assumption (\ref{LagrangeCoercive}) that  
	\begin{align*}
	\mathcal{E}^\alpha_{p,\theta,\lambda}(v) \geq \dfrac{c_0}{2}  \int_{\Omega} \Bigl( |\lDa v|^{p} + |\rDa v|^{p} + |v|^{p} \Bigr)\,dx  - c_1|\Omega|  \qquad \forall\, v \in {_{0}}\mathcal{{W}}^{\alpha,p}_{\theta,\lambda}. 
	\end{align*}
	Since $M < \infty$ we have that $\|v_k\|_{\mathcal{{W}}^{\alpha,p}_{\theta,\lambda}} < \infty$ for every $k$.
	\begin{align*}
	\int_{\Omega} \Bigl( |\lDa v_k |^{p} + |\rDa v_{k} |^{p} + |v_{k}|^{p} \Bigr) \,dx < \infty
	\qquad \forall k\geq 1. 
	\end{align*}
	 Thus, $\{v_k\}_{k=1}^{\infty}$ is a bounded sequence in ${_{0}}\mathcal{{W}}^{\alpha,p}_{\theta, \lambda}$ and there exists a subsequence $\{v_{k_j}\}_{j=1}^{\infty} \subset \{v_{k}\}_{k=1}^{\infty}$ and a function $u \in {_{0}}\mathcal{{W}}^{\alpha,p}_{\theta,\lambda}$ such that $v_{k_j} \rightharpoonup u$ in ${_{0}}\mathcal{{W}}^{\alpha,p}_{\theta,\lambda}$.
	 We need to show that $u \in {_{0}}\mathcal{{W}}^{\alpha,p}_{\theta,\lambda}$. 
	It follows from the fact that ${_{0}}\mathcal{{W}}^{\alpha,p}_{\theta,\lambda}$ is a closed  subspace of $\mathcal{W}^{\alpha,p}_{\theta,\lambda}$ and Mazur's Theorem (cf. \cite{Evans,Brezis}) that ${_{0}}\mathcal{{W}}^{\alpha,p}_{\theta,\lambda} $ is weakly closed. Hence $u \in {_{0}}\mathcal{{W}}^{\alpha,p}_{\theta,\lambda} .$
	
	Finally, since $\mathcal{E}^\alpha_{p,\theta,\lambda}$ is lower semicontinuous, then
	\begin{align*}
	\mathcal{E}^\alpha_{p,\theta,\lambda}(u) \leq \underset{k \rightarrow \infty}{\mbox{ liminf } } \mathcal{E}^\alpha_{p,\theta,\lambda}(v_k) = M.
	\end{align*}
	Thus,
	\begin{align*}
	\mathcal{E}^\alpha_{p,\theta,\lambda}(u) = \underset{v \in {_{0}}\mathcal{{W}}^{\alpha,p}_{\theta,\lambda}}{\mbox{argmin }}  \mathcal{E}^\alpha_{p,\theta,\lambda}(v) = M.
	\end{align*}
	The proof is complete. 
\end{proof}



We conclude this section with the following remark.

\begin{remark}
	The assumptions and techniques used to prove Proposition \ref{LowerSemiContinuity} and Theorem \ref{Existence} are not sharp and can be relaxed in certain cases.
	
	(i) If $\theta \in (0,1)$, then ${_{0}}\mathcal{{W}}^{\alpha,p}_{\theta,\lambda} = W^{\alpha,p}_{0}$. In this case, we can assume only that $L_{p,\theta,\lambda} (\lDa v,\rDa v,v,x) \geq c_0 \bigl(|\lDa v|^{p} + |\rDa v|^{p} \bigr) - c_1|\Ome|$ and use the fact that $c^{1-\alpha}_{\pm} = 0$ in  $W^{\alpha,p}(\Omega)$  (cf. Proposition \ref{ConstantZero}) to apply directly the fractional Poincar\'e inequality (\ref{SymPoincare}) in the proof of Theorem \ref{Existence}.
	
	(ii) If $\theta =0$ or $1$ and $\lambda=0$, then  ${_{0}}\mathcal{{W}}^{\alpha,p}_{\theta,\lambda} = {^{\pm}}{\mathring{W}}{^{\alpha,p}_{0}}(\Omega)$, and again, we can relax the condition
	 on the density function so that  $L_{p,\theta,\lambda}(\lDa v,\rDa v,v,x)  \geq c_0 \bigl(|\lDa v|^{p} + |\rDa v|^{p} \bigr) - c_1|\Omega|$ and use the fact that $ u \in {^{\pm}}{\mathring{W}}{^{\alpha,p}}(\Omega)$ to apply the fractional Poincar\'e inequality (\ref{SimplePoincare}) to prove the minimizing sequence is bounded in ${^{\pm}}{W}{^{\alpha,p}}(\Omega)$ in the proof of Theorem \ref{Existence}.
\end{remark}

\section{Fractional Calculus of Variations via Galerkin Formulation}\label{sec-5}
 
In this section, we consider the fractional calculus of variations problem:
\begin{align}\label{MinFamily}
u = \underset{ v \in  {_{0}}{\mathcal{W}}^{\alpha,p}_{\theta,\lambda}}{\mbox{argmin }} \mathcal{E}^\alpha_{p,\theta,\lambda} (v)
\end{align}
with the following generalized $p$-energy density function:  
\begin{align}\label{thetaLagrange}
    L_{p,\theta,\lambda}(\lDa v, \rDa v , v,x)  = \frac{1}{p} \Bigl( (1-\theta) |\lDa v|^{p} + \theta |\rDa v|^{p} + \lambda |v|^{p} \Bigr)  -fv 
\end{align}
for a suitably given function or functional $f$.  
We shall first derive the Galerkin formulation and the Euler-Lagrange equation  
for the associated calculus of variations problem \eqref{FundamentalMin}. We then present a 
detailed well-posedness and regularity analysis in the special case $p=2$ for the problem with both 
Dirichlet and Neumann boundary condition and various combinations of $\theta$ and $\lambda$
via the Galerkin approach. 


    

We note that it is easy to check the density function \eqref{thetaLagrange} satisfies the assumptions of Proposition \ref{LowerSemiContinuity} and Theorem \ref{Existence} for suitable $f$ (including $L^2$ functions)
and therefore, the existence of a minimizer is settled for the general case $1 < p < \infty$. 
However, our focus is to study this particular calculus of variations problem via an equivalent Galerkin
(or weak) formulation in the case $p=2$ for a weaker source function $f$. Such a Galerkin theory
will serve as a foundation for developing and analyzing efficient numerical methods for these problems \cite{Feng_Sutton3}.

\subsection{Euler-Lagrange Equation and Galerkin Formulation}\label{sec-5.1}

Before we study any well-posedness results for the problems (\ref{MinFamily}), we first discuss the associated Euler-Lagrange equation and the weak formulation.

\begin{theorem}\label{EulerLagrangeThm}
    Let $f \in L^{q}(\Omega)$ and assume that $u$ is a minimizer of (\ref{MinFamily}). Then $u$ satisfies, in the distributional sense, the following Euler-Lagrange equation:
       \begin{align}\label{EulerLagrange}
            (1-\theta) {^{-}}{\Delta}{^{\alpha}_{p}} u + \theta {^{+}}{\Delta}{^{\alpha}_{p}} u+ \lambda |u|^{p-2} u = f \text{ in } \Omega.
        \end{align}
\end{theorem}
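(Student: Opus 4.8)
The plan is to compute the first variation of the energy functional $\mathcal{E}^\alpha_{p,\theta,\lambda}$ at the minimizer $u$ along smooth compactly supported directions, and then identify the resulting integral identity with the weak (distributional) form of \eqref{EulerLagrange}. Concretely, let $\varphi \in C^\infty_0(\Omega)$ and set $\Phi(t) := \mathcal{E}^\alpha_{p,\theta,\lambda}(u + t\varphi)$ with the density \eqref{thetaLagrange}. Since $\varphi$ has compact support, both $\lDa(u+t\varphi) = \lDa u + t\lDa\varphi$ and $\rDa(u+t\varphi) = \rDa u + t\rDa\varphi$ hold with the obvious linearity, the term $\lrT$-trace contributions vanish, and $\Phi$ is differentiable in $t$ with
\begin{align*}
\Phi'(t) &= \int_\Omega \Bigl[ (1-\theta)\,|\lDa(u+t\varphi)|^{p-2}\lDa(u+t\varphi)\,\lDa\varphi
+ \theta\,|\rDa(u+t\varphi)|^{p-2}\rDa(u+t\varphi)\,\rDa\varphi \\
&\qquad\qquad + \lambda\,|u+t\varphi|^{p-2}(u+t\varphi)\,\varphi - f\varphi \Bigr] dx.
\end{align*}
Setting $t=0$ and using that $u$ is a minimizer (so $\Phi'(0)=0$) yields
\begin{align*}
\int_\Omega \Bigl[ (1-\theta)\,|\lDa u|^{p-2}\lDa u\,\lDa\varphi + \theta\,|\rDa u|^{p-2}\rDa u\,\rDa\varphi + \lambda\,|u|^{p-2}u\,\varphi \Bigr] dx = \int_\Omega f\varphi\,dx
\end{align*}
for all $\varphi \in C^\infty_0(\Omega)$.

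Next I would invoke the definition of weak fractional derivatives (Definition \ref{WeakDerivative}) and the integration-by-parts identity it encodes: $\int_\Omega w\,\lDa\varphi\,dx = \int_\Omega \rDa w\,\varphi\,dx$ (and symmetrically with $\lDa \leftrightarrow \rDa$) whenever the right-hand side makes sense distributionally, applied with $w = |\lDa u|^{p-2}\lDa u$ and $w = |\rDa u|^{p-2}\rDa u$ respectively. This converts the identity into
\begin{align*}
\int_\Omega \Bigl[ (1-\theta)\,\rDa\bigl(|\lDa u|^{p-2}\lDa u\bigr) + \theta\,\lDa\bigl(|\rDa u|^{p-2}\rDa u\bigr) + \lambda\,|u|^{p-2}u - f \Bigr]\varphi\,dx = 0
\end{align*}
for all $\varphi \in C^\infty_0(\Omega)$, which is precisely the statement that \eqref{EulerLagrange} holds in the distributional sense, after recalling $\,{^{-}}{\Delta}{^{\alpha}_{p}} u = \rDa(|\lDa u|^{p-2}\lDa u)$ and $\,{^{+}}{\Delta}{^{\alpha}_{p}} u = \lDa(|\rDa u|^{p-2}\rDa u)$ from Proposition \ref{prop3.1} and Definition \ref{LaplacianDef}.

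The main technical obstacle is justifying the differentiation under the integral sign and the $t$-differentiability of $\Phi$ at $t=0$ when $1 < p < \infty$ is arbitrary: unlike the pure power case handled in Proposition \ref{prop3.1}, here $p$ need not be an even integer, so $s \mapsto |s|^{p-2}s$ is only $C^1$ (indeed its derivative $(p-1)|s|^{p-2}$ may blow up or vanish), and one must produce a dominating integrable function on a neighborhood of $t=0$. This is standard — using that $u, \varphi \in \lrWap(\Omega)$ (so $\lDa u, \rDa u \in L^p$ and $\lDa\varphi, \rDa\varphi \in L^p$, with $\varphi$ and its fractional derivatives bounded), the difference quotients of $t \mapsto |\lDa u + t\lDa\varphi|^p$ are dominated by an $L^1$ function via the mean value theorem and Young/Hölder inequalities — but it is the one place where care with the exponents ($f \in L^q$ with $q$ the conjugate of $p$, and the Sobolev embedding for the $\lambda|u|^{p-2}u$ term) is genuinely needed rather than a formal computation. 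A secondary, minor point is confirming that the boundary trace terms arising from the fractional integration by parts indeed vanish for $\varphi \in C^\infty_0(\Omega)$; this follows immediately since such $\varphi$ vanishes in a neighborhood of $\partial\Omega$, as already used implicitly in Proposition \ref{prop3.1}.
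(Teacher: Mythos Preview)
Your proposal is correct and follows essentially the same approach as the paper: define $\Phi(t):=\mathcal{E}^\alpha_{p,\theta,\lambda}(u+t\varphi)$ for $\varphi\in C^\infty_0(\Omega)$, use $\Phi'(0)=0$ to obtain the integral identity \eqref{eq5.4}, and then integrate by parts via the weak fractional derivative definition together with the Fundamental Lemma of the Calculus of Variations. The paper's proof is terser (it explicitly defers to Proposition~\ref{prop3.1} and only highlights the main steps), whereas you spell out the domination argument for differentiating under the integral sign; but the route is the same.
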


\begin{proof}
    Since the proof is essentially the same as that of Proposition \ref{prop3.1}, we only highlight the 
    main steps. 
    Define the function $\Phi: \R_{+} \rightarrow \R$ by $\Phi(t) := \mathcal{E}^\alpha_{p,\theta,\lambda} (u + tv)$ for any $v \in C^{\infty}_{0}(\Omega)$. 
    Since $u$ is a minimizer of (\ref{MinFamily}), then $\Phi$ takes its minimum value at $t=0$, Thus, 
    $\Phi'(0)=0$, which implies that   
   \begin{align}\label{eq5.4}
   \int_{\Omega} \Bigl((1-\theta) |\lDa u|^{p-2} \lDa u \lDa v &+\theta |\rDa u|^{p-2} \rDa u \rDa v \Bigr)\,dx\\
      & +\int_{\Omega} \lambda |u|^{p-2} u v \,dx = \int_{\Omega} fv \,dx. \nonumber
    \end{align}
   Integrating by parts and using the Fundamental Lemma of the Calculus of Variations (cf. \cite{Dacorogna}) we conclude that $u$ satisfies (\ref{EulerLagrange}) in the distributional sense.  
\end{proof}

\begin{remark}
    (i) Accounting for the boundary conditions built into the energy space ${_{0}}{\mathcal{W}}^{\alpha,p}_{\theta,\lambda}$, the underlying fractional boundary value problem to  problem (\ref{MinFamily}) is
\begin{subequations}\label{BVP}
\begin{align}\label{BVPa}
        (1-\theta) {^{-}}{\Delta}{^{\alpha}_{p}} u + \theta {^{+}}{\Delta}{^{\alpha}_{p}} u+ \lambda |u|^{p-2} u = f \text{ in } \Omega, \\ 
        (1-\theta) \lT u = 0, \quad \theta \rT u = 0.
\end{align}
\end{subequations}
Here we use the same notational conventions that are detailed in Remark \ref{BoundaryNotation}.

(ii) \eqref{eq5.4} is called a weak formulation of the boundary value problem 
\eqref{BVP}.
\end{remark}

With the connection between the variational problem and the fractional boundary value problem established, 
we turn our attention to the special case $p=2$. In this case, we shall establish existence and uniqueness of minimizers via the weak formulation. To the end, we define
\begin{subequations}\label{eq5.6}
    \begin{align}\label{eq5.6a}
        a_{\theta , \lambda} (u,v) &:=   \int_{\Omega}  \Bigl((1-\theta) \lDa u \lDa v + \theta  \rDa u \rDa v +\lambda u v  \Bigr)\,dx,\\
        F(v) &: = \int_{\Omega} fv\,dx \label{eq5.6b}
    \end{align}
 \end{subequations}
It is easy to see that 
$a_{\theta , \lambda}(\cdot, \cdot): {_{0}}{\mathcal{W}}^{\alpha,2}_{\theta,\lambda}\times {_{0}}{\mathcal{W}}^{\alpha,2}_{\theta,\lambda} \rightarrow \R$  is a bilinear form 
and $F(\cdot): {_{0}}{\mathcal{W}}^{\alpha,2}_{\theta,\lambda} \rightarrow \R$ is a bounded linear functional 
for $f \in L^{2}(\Omega)$.

We are now ready to state and prove the following equivalent  theorem.

\begin{proposition}\label{WeakEqual}
    Let $u \in {_0}{\mathcal{W}}^{\alpha,2}_{\theta,\lambda} $. Then $u$ is a minimizer of 
    \eqref{MinFamily} with $p=2$ if and only if 
  \begin{align}\label{eq5.7}
  	a_{\theta, \lambda} (u,v) = F(v) \qquad \forall \, v \in {_0}{\mathcal{W}}^{\alpha,2}_{\theta,\lambda} .
  \end{align}
\end{proposition}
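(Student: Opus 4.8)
The plan is to establish the equivalence via the standard convexity argument from the calculus of variations. The key observation is that with $p=2$, the energy functional $\mathcal{E}^\alpha_{2,\theta,\lambda}$ is a quadratic functional: writing $\mathcal{E}^\alpha_{2,\theta,\lambda}(v) = \frac12 a_{\theta,\lambda}(v,v) - F(v)$ directly from \eqref{thetaLagrange} and \eqref{eq5.6}, where $a_{\theta,\lambda}$ is a symmetric bilinear form and $F$ is linear. Note however that unlike the classical case there is a subtlety: the test functions in \eqref{eq5.7} range over all of ${_0}{\mathcal{W}}^{\alpha,2}_{\theta,\lambda}$, not merely $C^\infty_0(\Omega)$, so the boundary (trace) conditions built into the energy space must be handled carefully.

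First I would prove the direction that a minimizer satisfies \eqref{eq5.7}. Fix $v \in {_0}{\mathcal{W}}^{\alpha,2}_{\theta,\lambda}$ and define $\Phi(t) := \mathcal{E}^\alpha_{2,\theta,\lambda}(u+tv)$ for $t \in \R$; since ${_0}{\mathcal{W}}^{\alpha,2}_{\theta,\lambda}$ is a linear space, $u+tv$ remains an admissible competitor. Expanding using bilinearity gives $\Phi(t) = \frac12 a_{\theta,\lambda}(u,u) - F(u) + t\bigl(a_{\theta,\lambda}(u,v) - F(v)\bigr) + \frac{t^2}{2} a_{\theta,\lambda}(v,v)$, a quadratic polynomial in $t$. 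Since $u$ is a minimizer, $\Phi$ attains its minimum at $t=0$, so $\Phi'(0) = a_{\theta,\lambda}(u,v) - F(v) = 0$, which is exactly \eqref{eq5.7}.

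Conversely, suppose $u \in {_0}{\mathcal{W}}^{\alpha,2}_{\theta,\lambda}$ satisfies \eqref{eq5.7}. For any $w \in {_0}{\mathcal{W}}^{\alpha,2}_{\theta,\lambda}$, write $w = u + v$ with $v := w - u \in {_0}{\mathcal{W}}^{\alpha,2}_{\theta,\lambda}$. Then by the same expansion,
\begin{align*}
\mathcal{E}^\alpha_{2,\theta,\lambda}(w) = \mathcal{E}^\alpha_{2,\theta,\lambda}(u) + \bigl(a_{\theta,\lambda}(u,v) - F(v)\bigr) + \tfrac12 a_{\theta,\lambda}(v,v) = \mathcal{E}^\alpha_{2,\theta,\lambda}(u) + \tfrac12 a_{\theta,\lambda}(v,v),
\end{align*}
where the middle term vanishes by \eqref{eq5.7}. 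Since $a_{\theta,\lambda}(v,v) = \int_\Omega \bigl((1-\theta)|\lDa v|^2 + \theta|\rDa v|^2 + \lambda |v|^2\bigr)\,dx \geq 0$, we conclude $\mathcal{E}^\alpha_{2,\theta,\lambda}(w) \geq \mathcal{E}^\alpha_{2,\theta,\lambda}(u)$ for all admissible $w$, i.e. $u$ is a minimizer of \eqref{MinFamily}.

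The computations here are entirely routine; the only point requiring minor care — and the step I would flag — is confirming that $\mathcal{E}^\alpha_{2,\theta,\lambda}(v) = \frac12 a_{\theta,\lambda}(v,v) - F(v)$ with the bilinear form and functional as defined in \eqref{eq5.6}, and that $a_{\theta,\lambda}$ is well-defined, symmetric, and nonnegative on ${_0}{\mathcal{W}}^{\alpha,2}_{\theta,\lambda}$ for every admissible pair $(\theta,\lambda)$; this uses the definitions of the spaces $\mathcal{W}^{\alpha,2}_{\theta,\lambda}$ in \eqref{general_spaces_2} (so that whichever of $\lDa v$, $\rDa v$ appears with a nonzero coefficient indeed lies in $L^2(\Omega)$). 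No appeal to the Fundamental Lemma of the Calculus of Variations is needed since we never pass to a differential equation — the argument is purely variational and the equivalence is an identity of quadratic functionals.
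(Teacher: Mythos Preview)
Your proposal is correct and takes essentially the same approach as the paper: define $\Phi(t)=\mathcal{E}^\alpha_{2,\theta,\lambda}(u+tv)$ and use $\Phi'(0)=0$ for the forward direction, then expand $\mathcal{E}^\alpha_{2,\theta,\lambda}(u+v)=\mathcal{E}^\alpha_{2,\theta,\lambda}(u)+\tfrac12 a_{\theta,\lambda}(v,v)\geq \mathcal{E}^\alpha_{2,\theta,\lambda}(u)$ for the converse. Your version is in fact slightly more explicit in writing out the quadratic expansion, but the argument is identical.
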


\begin{proof}
    Assume that $u \in {_0}{\mathcal{W}}^{\alpha,2}_{\theta,\lambda}$ is a minimizer of 
    \eqref{MinFamily}, we define 
    \[
    \Phi(t) : = \mathcal{E}^\alpha_{2,\theta,\lambda}(u+tv) \qquad \forall \, v \in {_0}{\mathcal{W}}^{\alpha,2}_{\theta,\lambda}. 
    \]
    Then $\Phi$ takes it minimum value at $t=0$. Hence, $\Phi'(0)=0$, which 
    yields \eqref{eq5.7}. 
    
    Conversely, suppose that $u \in {_0}{\mathcal{W}}^{\alpha,2}_{\theta,\lambda}$ solves 
    \eqref{eq5.7}, it is easy to check that 
    \begin{align*}
        \mathcal{E}^\alpha_{2,\theta,\lambda} (u+v)  = \mathcal{E}^\alpha_{2,\theta,\lambda} (u) 
        + \frac{1}{2} a_{\theta , \lambda} (v,v) \geq \mathcal{E}^\alpha_{2,\theta,\lambda} (u)
    \end{align*}
    for any $v \in {_0}{\mathcal{W}}^{\alpha,2}_{\theta,\lambda}$.  Thus, $u$ solves 
    \eqref{MinFamily} with $p=2$. The proof is complete. 
\end{proof}

\begin{remark}
    \eqref{eq5.7} is called a weak formulation of the boundary value problem \eqref{eq5.6}, which can 
    be formally derived from \eqref{eq5.6} by an integration by parts procedure after testing the differential equation with a function $v \in {_0}{\mathcal{W}}^{\alpha,2}_{\theta,\lambda}$. This gives a 
    precise meaning to the boundary value problem and to its solution. 
\end{remark}

\subsection{Existence and Uniqueness}\label{sec-5.2}
The goal of this subsection is to show that there exists a unique solution to the variational 
problem \eqref{eq5.7} via the well-known Lax-Milgram Theorem for each case of $\theta\in (0,1)$ and $\lambda=0$ or $1$. Together with the results of Section \ref{sec-5.1}, it proves that in the case $p=2$, there exists a unique $u \in {_0}{\mathcal{W}}^{\alpha,2}_{\theta,\lambda}$ which solves problem \eqref{MinFamily}.

\begin{proposition}\label{LaxMilgram}
    There exists a unique solution  $u \in {_0}{\mathcal{W}}^{\alpha,2}_{\theta,\lambda} $ to problem 
    \eqref{eq5.7}.
\end{proposition}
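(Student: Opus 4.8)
The plan is to apply the Lax--Milgram Theorem to the bilinear form $a_{\theta,\lambda}(\cdot,\cdot)$ and the linear functional $F(\cdot)$ on the Hilbert space $H := {_0}{\mathcal{W}}^{\alpha,2}_{\theta,\lambda}$. Since we have already observed in Section~\ref{sec-5.1} that $a_{\theta,\lambda}$ is bilinear and that $F$ is a bounded linear functional for $f\in L^2(\Omega)$, the task reduces to verifying the two remaining hypotheses of Lax--Milgram: boundedness (continuity) of $a_{\theta,\lambda}$ on $H\times H$, and coercivity of $a_{\theta,\lambda}$ on $H$. I will treat the cases $\theta\in(0,1)$ with $\lambda=0$ and $\lambda=1$ together wherever possible, recalling from \eqref{general_spaces_2} that in all these cases $H$ is (a closed subspace of) $W^{\alpha,2}(\Omega)$ with the norm $\|\cdot\|_{W^{\alpha,2}(\Omega)}$.

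First I would establish boundedness. Writing out $a_{\theta,\lambda}(u,v)$ from \eqref{eq5.6a} and applying the Cauchy--Schwarz inequality to each of the three terms, one bounds $|a_{\theta,\lambda}(u,v)|$ by a constant times $\bigl(\|\lDa u\|_{L^2}+\|\rDa u\|_{L^2}+\|u\|_{L^2}\bigr)\bigl(\|\lDa v\|_{L^2}+\|\rDa v\|_{L^2}+\|v\|_{L^2}\bigr)$, which in turn is controlled by $C\,\|u\|_{W^{\alpha,2}(\Omega)}\|v\|_{W^{\alpha,2}(\Omega)}$ since the $W^{\alpha,2}$-norm dominates all of $\|\lrDa\cdot\|_{L^2}$ and $\|\cdot\|_{L^2}$ (cf. Definition~\ref{Sobolev} and the norm equivalence facts in Appendix~A,~B). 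This step is routine.

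The heart of the matter is coercivity, and this is the step I expect to be the main obstacle. From \eqref{eq5.6a} we get the lower bound
\begin{align*}
a_{\theta,\lambda}(v,v) = (1-\theta)\|\lDa v\|_{L^2(\Omega)}^2 + \theta\|\rDa v\|_{L^2(\Omega)}^2 + \lambda\|v\|_{L^2(\Omega)}^2 \ge c_0\Bigl(\|\lDa v\|_{L^2(\Omega)}^2 + \|\rDa v\|_{L^2(\Omega)}^2\Bigr) + \lambda\|v\|_{L^2(\Omega)}^2
\end{align*}
with $c_0 := \min\{\theta,1-\theta\}>0$ (using $\theta\in(0,1)$). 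When $\lambda=1$ this already controls $\|v\|_{L^2}^2$ directly, and combined with control of the fractional derivatives it bounds $\|v\|_{W^{\alpha,2}(\Omega)}^2$ from below, giving coercivity. When $\lambda=0$, the $L^2$-control of $v$ is missing and must be recovered from the fractional-derivative terms via a \emph{fractional Poincar\'e inequality} — and here the boundary conditions encoded in $H={_0}\mathcal{W}^{\alpha,2}_{\theta,0}$ are essential: since $0<\theta<1$, both traces $\lT v=\rT v=0$ vanish, so $v$ lies in $\mathring{W}^{\alpha,2}$-type space and the Poincar\'e inequality (cf.\ \eqref{SymPoincare}, together with Proposition~\ref{ConstantZero} which gives $c^{1-\alpha}_\pm=0$) yields $\|v\|_{L^2(\Omega)}\le C\,\bigl(\|\lDa v\|_{L^2(\Omega)}+\|\rDa v\|_{L^2(\Omega)}\bigr)$. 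Feeding this back in gives $a_{\theta,\lambda}(v,v)\ge \tilde c_0\,\|v\|_{W^{\alpha,2}(\Omega)}^2$ in both cases.

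Having verified that $a_{\theta,\lambda}$ is a bounded, coercive bilinear form on the Hilbert space $H$ and that $F\in H^*$, the Lax--Milgram Theorem (cf.\ \cite{Evans,Brezis}) delivers a unique $u\in H={_0}\mathcal{W}^{\alpha,2}_{\theta,\lambda}$ with $a_{\theta,\lambda}(u,v)=F(v)$ for all $v\in H$, which is exactly \eqref{eq5.7}. I would close by noting that one should double-check the two extreme sub-cases flagged in Remark~\ref{BoundaryNotation} are not in scope here (the proposition restricts to $\theta\in(0,1)$), so no degenerate coercivity constant $c_0=\min\{\theta,1-\theta\}$ issue arises; the only real care needed is the $\lambda=0$ Poincar\'e argument just described.
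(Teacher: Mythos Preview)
Your approach via Lax--Milgram is exactly the paper's, and your treatment of the case $\theta\in(0,1)$ matches the paper's Case One almost verbatim (Cauchy--Schwarz for boundedness; $c_0=\min\{\theta,1-\theta\}$ plus the symmetric Poincar\'e inequality \eqref{SymPoincare} for coercivity when $\lambda=0$). The gap is your scoping claim at the end: the proposition does \emph{not} restrict to $\theta\in(0,1)$. The sentence in the introduction to Section~\ref{sec-5.2} is loosely worded, but the statement itself carries no restriction, and the paper's proof explicitly treats the endpoint cases $\theta\in\{0,1\}$ as separate Cases Two and Three. Your argument as written fails there precisely because $c_0=\min\{\theta,1-\theta\}=0$.

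What is missing is short but genuinely different from your Case One. When $\theta=0$ or $1$ the bilinear form collapses to $a_{\theta,\lambda}(v,v)=\|\lrDa v\|_{L^2}^2+\lambda\|v\|_{L^2}^2$, and the ambient space is the \emph{one-sided} space ${^{\pm}}{H}{^{\alpha}_{0}}(\Omega)$ (if $\lambda=1$) or ${^{\pm}}{\mathring{H}}{^{\alpha}_{0}}(\Omega)$ (if $\lambda=0$); see \eqref{general_spaces_2}. For $\lambda=1$ coercivity is then trivial with constant $1$. For $\lambda=0$ you must recover $\|v\|_{L^2}$ from a single one-sided derivative, and the symmetric Poincar\'e inequality \eqref{SymPoincare} you invoked is not the right tool: you need the one-sided Poincar\'e inequality \eqref{SimplePoincare}, which is available precisely because the space in this sub-case is $\lrcHa_0(\Omega)$ (i.e.\ $c^{1-\alpha}_\pm=0$). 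Add these two endpoint cases and your proof is complete and aligned with the paper's.
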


\begin{proof}
    The idea of the proof is to utilize  the Lax-Milgram Theorem. To the end,  we need to verify 
    three conditions required by the theorem. 
    \begin{itemize}
        \item[(i)] $a_{\theta, \lambda}$ is bounded in ${_0}{\mathcal{W}}^{\alpha,2}_{\theta,\lambda}   \times  {_0}{\mathcal{W}}^{\alpha,2}_{\theta,\lambda}$: there exists $M >0$ such that 
        \begin{align}\label{BiBounded}
            |a_{\theta , \lambda} (w,v)| \leq M \|w\|_{\mathcal{W}^{\alpha,2}_{\theta ,\lambda}} \|v\|_{\mathcal{W}^{\alpha,2}_{\theta ,\lambda}} \qquad\forall \, w,v\in {_0}{\mathcal{W}}^{\alpha,2}_{\theta,\lambda} .
        \end{align}
        \item[(ii)] $a_{\theta, \lambda}$ is coercive in ${_0}{\mathcal{W}}^{\alpha,2}_{\theta,\lambda}$: there exists $\gamma > 0$ such that 
        \begin{align}\label{BiCoercive}
            a_{\theta , \lambda} (v,v) \geq \gamma \|v\|_{\mathcal{W}^{\alpha,2}_{\theta ,\lambda}}^{2} \qquad\forall \, v\in {_0}{\mathcal{W}}^{\alpha,2}_{\theta,\lambda} .
        \end{align}
         \item[(iii)] $F$ is a bounded linear functional on ${_0}{\mathcal{W}}^{\alpha,2}_{\theta,\lambda}$: there exists $C > 0$ such that 
        \begin{align}\label{LinearBounded}
        |F(v)| \leq C\|v\|_{\mathcal{W}^{\alpha,2}_{\theta ,\lambda}}  \qquad\forall\, v\in {_0}{\mathcal{W}}^{\alpha,2}_{\theta,\lambda} .
        \end{align}
    \end{itemize}

    As the proof of each of these estimates depends on the solution space 
    ${_0}{\mathcal{W}}^{\alpha,2}_{\theta,\lambda}$ and its associated norm, we 
    separate the verification into subcases when necessary.

    To prove that $a_{\theta,\lambda}(\cdot , \cdot)$ is bounded and coercive, consider the 
    following cases.  
    
    {\em Case One:}  Let $\theta \in (0,1)$ and $\lambda= 0$ or $1$. In this case, ${_0}{\mathcal{W}}^{\alpha,2}_{\theta,\lambda} = H^{\alpha}_{0}(\Omega)$ which is endowed with 
    the norm  
    $$\|v \|_{H^{\alpha}_0(\Omega)} : = \Bigl(\|\lDa v\|_{L^{2}(\Omega)}^{2} + \|\rDa v \|^{2}_{L^{2}(\Omega)} \Bigr)^{\frac12}.$$ 
    The above norm is equivalent to the full $H^{\alpha}_{0}$-norm due to the fractional Poincar\'e inequality (cf. Theorem \ref{FractionalPoincareThm}). 
    
    By Schwarz inequality we get
    \begin{align*}
        |a_{\theta , \lambda}(w,v)| &= \left| \int_{\Omega} (1-\theta) \lDa w  \lDa v +\theta  \rDa  w  \rDa v + \lambda w v \,dx \right| \\
        &\leq    \|\lDa w \|_{L^{2}(\Omega)} \| \lDa v\|_{L^{2}(\Omega)} + \|\rDa w \|_{L^{2}(\Omega)} \| \rDa v\|_{L^{2}(\Omega)} \\ 
        &\qquad + \|w\|_{L^{2}(\Omega)} \|v\|_{L^{2}(\Omega)} \\
        &\leq \|w\|_{H^{\alpha}_0(\Omega)} \|v\|_{H^{\alpha}_0(\Omega)}.
     \end{align*} 
    Hence, \eqref{BiBounded} holds with $M=1$. Trivially, 
    \begin{align*}
        a_{\theta , \lambda} (v,v) &= \int_{\Omega} (1-\theta) (\lDa v)^{2} + \theta (\rDa v)^{2} + \lambda v^{2} \,dx\\
        &\geq \min \{ 1-\theta , \theta\}
        \Bigl( \|\lDa v\|_{L^{2}(\Omega)}^{2} +\|\rDa v\|_{L^{2}(\Omega)}^{2} \Bigr) \\ 
        &\geq \min \{ 1-\theta , \theta\}  \| v\|_{H^{\alpha}_0(\Omega)}^{2}.
    \end{align*}
    Thus, \eqref{BiCoercive} holds with $\gamma= \min \{ 1-\theta , \theta\}$.
    
    Lastly, the inequality \eqref{LinearBounded} follows from an application of Schwarz and fractional Poincar\'e inequality (cf. Theorem \ref{FractionalPoincareThm}) in $H^{\alpha}(\Omega)$ as follows:
     \begin{align*}
    |F(v)| =   \left|\int_{\Omega} fv\,dx\right| \leq \|f\|_{L^{2}(\Omega)} \| v\|_{L^{2}(\Omega)} \leq C_P \|f\|_{L^{2}(\Omega)}  \| v\|_{H^{\alpha}_0(\Omega)}^{2},
    \end{align*}
    where $C_p$ denotes the Poincar\'e constant.
    
    {\em Case Two:}  Let $\theta=0$ or $1$ and $\lambda =1$. In this case, we have  
    ${_0}{\mathcal{W}}^{\alpha,2}_{\theta,\lambda} = {^\pm}{H}^\alpha_0(\Omega)$ which is endowed with the norm 
    $$\|v\|_{{^\pm}{H}^\alpha_0(\Omega)}^{2} = \|\lrDa v\|_{L^{2}(\Omega)}^{2} + \|v\|_{L^{2}(\Omega)}^{2}.$$
    It is easy to see that both \eqref{BiBounded} and \eqref{BiCoercive} hold with
    $M = 1$ and $\gamma = 1$, and \eqref{LinearBounded} follows immediately from an application of 
    Schwarz inequality. 
    
   {\em Case Three:}  $\theta=0$ or $1$ and $\lambda =0$. We have 
   ${_0}{\mathcal{W}}^{\alpha,2}_{\theta,\lambda} = {^\pm}{\mathring{H}}^\alpha_0(\Omega)$ and 
    \begin{align*}
        |a_{\theta , \lambda} (w,v) | 
        \leq \| \lrDa w\|_{L^{2}(\Omega)} \|\lrDa v\|_{L^{2}(\Omega)}  
        \leq  \|w\|_{{^{\pm}}{H}{^{\alpha}_0}(\Omega)} \| v\|_{{^{\pm}}{H}{^{\alpha}_0} (\Omega)}. 
    \end{align*}
    Thus, \eqref{BiBounded}  hold with $M = 1$.  To verify \eqref{BiCoercive}, we need to
    resort to the fractional Poincar\'e inequality (cf. (\ref{SimplePoincare})  to get 
    \begin{align*}
    a_{\theta , \lambda} (v,v) 
    &= \frac12 \| \lrDa v \|_{L^{2}(\Omega)}^{2} + \frac12 \| \lrDa v \|_{L^{2}(\Omega)}^{2} \\
    &\geq \frac12 \|\lrDa v\|_{L^{2}(\Omega)}^{2} + \frac{1}{2C_p^{2}} \| v\|_{L^{2}(\Omega)}^{2}
    \geq \gamma  \|v\|_{{^{\pm}}{H}{^{\alpha}_{0}}(\Omega)}^{2},
    \end{align*}
    where $\gamma=\frac12\min\{1,C_P^{-2}\}$.  
    
    Lastly, \eqref{LinearBounded} holds for the same reason as in {\em Case One}. The proof is 
    complete.
\end{proof}

As an immediate corollary of Propositions  \ref{WeakEqual} and \ref{LaxMilgram}, we have the 

\begin{theorem}
        There exists a unique solution to problem \eqref{MinFamily} with $p=2$.  
\end{theorem}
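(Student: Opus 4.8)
The plan is to deduce the final theorem as a straightforward corollary of the two results that immediately precede it, namely Proposition~\ref{WeakEqual} and Proposition~\ref{LaxMilgram}. Since the proof is so short, I would write it in one paragraph. First I would invoke Proposition~\ref{LaxMilgram} to obtain a unique $u \in {_0}{\mathcal{W}}^{\alpha,2}_{\theta,\lambda}$ solving the weak formulation \eqref{eq5.7}. Then I would apply Proposition~\ref{WeakEqual}, which asserts that $u$ solves \eqref{eq5.7} if and only if $u$ is a minimizer of \eqref{MinFamily} with $p=2$; this equivalence transfers both existence and uniqueness from the weak problem to the minimization problem. Existence is immediate. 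For uniqueness, I would note that if $u_1, u_2$ were both minimizers of \eqref{MinFamily}, then by the ``only if'' direction of Proposition~\ref{WeakEqual} both would solve \eqref{eq5.7}, and the uniqueness in Proposition~\ref{LaxMilgram} forces $u_1 = u_2$.

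The only mild subtlety worth flagging is that Proposition~\ref{LaxMilgram} and Proposition~\ref{WeakEqual} are each stated for a fixed but arbitrary admissible pair $(\theta,\lambda)$ with $\theta \in (0,1)$ and $\lambda \in \{0,1\}$ (and Proposition~\ref{LaxMilgram}'s proof splits into the three cases according to which concrete space ${_0}{\mathcal{W}}^{\alpha,2}_{\theta,\lambda}$ equals), so the conclusion of the final theorem is implicitly understood to hold for every such $(\theta,\lambda)$; I would make that uniformity explicit in one sentence. There is essentially no obstacle here — the genuine work has already been done in establishing weak lower semicontinuity, coercivity, boundedness of $a_{\theta,\lambda}$, and the variational equivalence, so the theorem is purely a packaging statement.

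Concretely, the proof I would write is: ``By Proposition~\ref{LaxMilgram}, for each admissible $(\theta,\lambda)$ there exists a unique $u \in {_0}{\mathcal{W}}^{\alpha,2}_{\theta,\lambda}$ satisfying $a_{\theta,\lambda}(u,v) = F(v)$ for all $v \in {_0}{\mathcal{W}}^{\alpha,2}_{\theta,\lambda}$. By Proposition~\ref{WeakEqual}, this $u$ is a minimizer of \eqref{MinFamily} with $p=2$, which establishes existence. If $u_1$ and $u_2$ are both minimizers of \eqref{MinFamily} with $p=2$, then Proposition~\ref{WeakEqual} implies each satisfies \eqref{eq5.7}, whence $u_1 = u_2$ by the uniqueness in Proposition~\ref{LaxMilgram}. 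This completes the proof.''
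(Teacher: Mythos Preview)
Your proposal is correct and matches the paper's approach exactly: the paper states the theorem as ``an immediate corollary of Propositions~\ref{WeakEqual} and~\ref{LaxMilgram}'' and gives no further proof, so your written-out argument is simply an explicit version of what the paper leaves implicit. One minor remark: the parameter range is $\theta\in[0,1]$ (the proof of Proposition~\ref{LaxMilgram} covers the endpoint cases $\theta=0,1$ as well), not just $\theta\in(0,1)$ as you wrote.
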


 \begin{remark}
The well-posedness results of this subsection can be extended to inhomogeneous boundary conditions as well. In that case problem \eqref{MinFamily} becomes 
    \begin{align*}
        u = \underset{ v \in  {_{g}}{\mathcal{W}}^{\alpha,2}_{\theta,\lambda}}{\mbox{\rm argmin }} \mathcal{E}^\alpha_{2,\theta,\lambda} (v)
    \end{align*}
    where ${_{g}}{\mathcal{W}}^{\alpha,p}_{\theta,\lambda}:= \{ u \in \mathcal{W}^{\alpha,p}_{\theta,\lambda} \, :\, (1-\theta) {^{-}}{T} u = (1-\theta)g_1, \, \theta {^{+}}{T} u = \theta g_2\}$ for two given real numbers $g_1$ and $g_2$. In the case $\theta =0$ or $1$, the idea is to set $$u(x) = u_0(x) + u_g(x) := u_0 +  [(1-\theta)g_1(b-x) + \theta g_2(x-a)](b-a)^{-1}.$$
    It can be shown that $u_g \in  {_{g}}{\mathcal{W}}{^{\alpha,2}_{\theta,\lambda}}$ (for $\theta=0$ or $1$). 
    Then the problem is reduced to finding $u_0$ which is the solution to a homogeneous problem. 
\end{remark}

\subsection{Neumann Boundary Value Problems}\label{sec-5.3} In this subsection, we consider 
 if essential (Dirichlet) boundary conditions are not enforced in the energy space for problem \eqref{MinFamily}  (i.e. ${^{\pm}}{T} u$ is left free) or $*$ takes empty value in 
 problem \eqref{eq1.7}. As already demonstrated in Theorem \ref{DeriveNeumann}, 
 this implies that the homogeneous Neumann (or natural) boundary condition ${^\pm}{\mathcal{N}}^\alpha_p u=0$ 
 is imposed to the calculus of variations problem. Below we consider such prototypical 
 fractional $p$-Poisson problems, especially, for $p=2$.  As in the integer order case, 
 much of the analysis of this problem follows in a similar manner to that of its Dirichlet counterpart as presented in Sections \ref{sec-5.1} and \ref{sec-5.2}. Therefore, 
 we shall only highlight some of the consequences and differences that emerge due to 
 considering Neumann (natural) boundary conditions. 

Formally, Neumann (natural) boundary value problems allow for more freedom in their solutions than in Dirichlet (essential) boundary value problems since the traces of the solution functions do not have to be defined in order for the Neumann boundary value(s) to be defined. 
For example, if $u \in \mathcal{W}^{\alpha,p}_{\theta,\lambda}$, then $\lDa u \in L^{2}(\Omega)$ and $\rlIoa \lrDa u \in C(\overline{\Omega})$. Therefore, the mapping,  $\mathcal{W}^{\alpha,p}_{\theta,\lambda} \mapsto {^{\pm}}{\mathcal{N}}{^{\alpha}_p}( \mathcal{W}_{\theta,\lambda}^{\alpha,p})$, is well defined for any $(\alpha ,p) \in (0,1) \times [1, \infty]$. Consequently, unlike the Dirichlet case, the restriction $\alpha p > 1$ is not needed. 
Moreover, in the integer order case, a Neumann boundary value problem often requires a side-condition (or compatibility-condition) to ensure the uniqueness of solutions. However, in the 
fractional order case, such a side-condition is not needed. 

\begin{theorem}
    Assume that 
    \begin{align}\label{NeumannProb}
    u = \underset{v \in \mathcal{W}^{\alpha,p}_{\theta,\lambda}}{\mbox{\rm argmin }} \mathcal{E}^\alpha_{p,\theta,\lambda} (v).
    \end{align}
    Then $u$ satisfies  equation \eqref{EulerLagrange} and 
    the Neumann boundary conditions 
    \begin{align}\label{NaturalBoundaryConditions}
            (1-\theta) {^{-}}{\mathcal{N}}{^{\alpha}_{p}} u = 0,\qquad 
            \theta {^{+}}{\mathcal{N}}{^{\alpha}_{p}} u =0
    \end{align}
    in the distributional sense. 
\end{theorem}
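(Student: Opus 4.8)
The plan is to mimic the argument of Theorem \ref{DeriveNeumann} but now for the general energy density \eqref{thetaLagrange}, keeping track of the $\theta$-weighted combination of the left and right contributions. First I would take an arbitrary test function $v \in C^{\infty}(\overline{\Omega})$ (which suffices by density of smooth functions in $\mathcal{W}^{\alpha,p}_{\theta,\lambda}$) and form $\Phi(t) := \mathcal{E}^\alpha_{p,\theta,\lambda}(u+tv)$; since $u$ is a minimizer over the \emph{full} space $\mathcal{W}^{\alpha,p}_{\theta,\lambda}$ with no trace constraint, $t=0$ is an interior extremum and $\Phi'(0)=0$ gives
\begin{align*}
\int_{\Omega} \Bigl( (1-\theta)|\lDa u|^{p-2}\lDa u\,\lDa v + \theta|\rDa u|^{p-2}\rDa u\,\rDa v + \lambda|u|^{p-2}uv - fv\Bigr)\,dx = 0.
\end{align*}

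Next, on each of the two derivative terms I would perform the same integration-by-parts manipulation used in the proof of Theorem \ref{DeriveNeumann}: decompose $\lrDa v = \Gamma(1-\alpha)^{-1}\kappa^{1+\alpha}_{\pm}{}^{-1}\rlT v + \lrIoa v'$, move the Riemann–Liouville integral off $v'$ onto the coefficient, and use the FTwFC (cf. Theorem \ref{FTWFC}) to convert $\rlDa(\rlIoa(\cdot))$ back into $\rlDa(\cdot)$, producing the interior term $\rlDa\bigl(|\lrDa u|^{p-2}\lrDa u\bigr)v = {^{\pm}}{\Delta}{^{\alpha}_{p}}u\,v$ plus the boundary term ${^{\pm}}{\mathcal{N}}{^{\alpha}_{p}}u\,\lrT v$, exactly as in that proof, together with the identity $\int_{\Omega}|\lrDa u|^{p-2}\lrDa u\,(\Gamma(1-\alpha)^{-1}\rlT v/\kappa^{1+\alpha}_{\pm})\,dx = \rlT\rlIoa|\lrDa u|^{p-2}\lrDa u\,\lrT v$. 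Carrying the weights through, the identity becomes
\begin{align*}
0 = \int_{\Omega}\Bigl( (1-\theta){^{-}}{\Delta}{^{\alpha}_{p}}u + \theta{^{+}}{\Delta}{^{\alpha}_{p}}u + \lambda|u|^{p-2}u - f\Bigr)v\,dx + (1-\theta){^{-}}{\mathcal{N}}{^{\alpha}_{p}}u\,{^{-}}{T}v + \theta{^{+}}{\mathcal{N}}{^{\alpha}_{p}}u\,{^{+}}{T}v.
\end{align*}

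Finally I would extract the two conclusions by a standard localization: first restrict to $v \in C^\infty_0(\Omega)$, so all trace/boundary terms vanish and the Fundamental Lemma of the Calculus of Variations (cf. \cite{Dacorogna}) yields the Euler–Lagrange equation \eqref{EulerLagrange} in the distributional sense; then, knowing the interior integral vanishes, return to general $v \in C^\infty(\overline{\Omega})$, so that $(1-\theta){^{-}}{\mathcal{N}}{^{\alpha}_{p}}u\,{^{-}}{T}v + \theta{^{+}}{\mathcal{N}}{^{\alpha}_{p}}u\,{^{+}}{T}v = 0$ for all such $v$, and since the pair of boundary values $({^{-}}{T}v,{^{+}}{T}v)$ ranges over all of $\R^2$ (one can choose $v$ with prescribed values at $a$ and $b$), this forces \eqref{NaturalBoundaryConditions}, with the conventions of Remark \ref{BoundaryNotation} handling the degenerate cases $\theta\in\{0,1\}$ where only one boundary term is present. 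The main obstacle is the bookkeeping in the boundary integration-by-parts step — ensuring the weakly singular kernel factor $\kappa^{1+\alpha}_{\pm}$ is handled correctly and that the decomposition of $\lrDa v$ is legitimate for smooth $v$ up to the boundary — but this is exactly the computation already validated in the proof of Theorem \ref{DeriveNeumann}, so it transfers with only the insertion of the weights $(1-\theta)$ and $\theta$.
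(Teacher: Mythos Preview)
Your proposal is correct and follows essentially the same approach as the paper: the paper's proof simply invokes the argument of Theorem \ref{EulerLagrangeThm} (first variation with $v\in C^\infty_0(\Omega)$) for \eqref{EulerLagrange} and the integration-by-parts computation of Theorem \ref{DeriveNeumann} for \eqref{NaturalBoundaryConditions}, which is exactly what you spell out in a single combined calculation with the $(1-\theta)$ and $\theta$ weights inserted. Your added localization step (first $v\in C^\infty_0$, then general $v\in C^\infty(\overline\Omega)$ with independently chosen boundary values) makes the logic slightly more explicit than the paper's terse reference, but the substance is identical.
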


\begin{proof}
	The validity of \eqref{EulerLagrange} can be proved in exactly the same way as in
	the proof of Theorem \ref{EulerLagrangeThm} because that proof does not require the zero-boundary condition of the assumed minimizer (in that problem). Similarly, 
	using the same techniques as in the proof of Theorem \ref{DeriveNeumann} we can show
	that $u$ satisfies the Neumann boundary conditions \eqref{NaturalBoundaryConditions}
	in the distributional sense.
\end{proof}

\begin{remark}
    (i) The associated fractional PDE problem to \eqref{NeumannProb} is the following fractional Neumann boundary value problem:
    \begin{subequations}\label{NBVP}
    \begin{align}
            (1-\theta) {^{-}}{\Delta}{^{\alpha}_{p}} u + \theta {^{+}}{\Delta}{^{\alpha}_{p}} u + \lambda|u|^{p-2}u = f  \text{ in } \Omega,& \\ 
            (1-\theta) {^{-}}{\mathcal{N}}{^{\alpha}_{p}} u = 0, \quad \theta {^{+}}{\mathcal{N}}{^{\alpha}_{p}}  u = 0.&
    \end{align}
    \end{subequations}
    
    (ii) Unlike the Dirichlet problem, the above Neumann boundary value problem may be well defined for any $(\alpha , p) \in (0,1) \times [1, \infty]$ since we do not require the function trace to exist; hence we need not require $\alpha p >1$. 
\end{remark}

It can be shown using the same techniques as in Sections \ref{sec-5.2} and \ref{sec-5.3} that 
the Neumann boundary value problem is well-posed when $p=2$.  We skip the proof and 
leave it to the interested reader. Moreover, we note that the well-posedness of the Neumann problem \eqref{NeumannProb} with $p=2$ does not require a side-condition for uniqueness.

\begin{proposition}
    $u \in \mathcal{W}^{\alpha,2}_{\theta , \lambda}$ is a solution of problem \eqref{NeumannProb}
    if and only if $u$ satisfies 
    \begin{align}\label{eq5.14}
     a_{\theta, \lambda} (u,v) = F(v) \qquad \forall \, v \in \mathcal{W}^{\alpha,2}_{\theta , \lambda},
    \end{align}  
    where $a_{\theta,\lambda}(\cdot,\cdot)$ and $F(\cdot)$ are defined by \eqref{eq5.6}.
\end{proposition}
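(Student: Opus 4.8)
The plan is to reproduce, almost verbatim, the variational argument of Proposition~\ref{WeakEqual}, the only change being that both the admissible class and the test class are now the full space $\mathcal{W}^{\alpha,2}_{\theta,\lambda}$ rather than its zero-trace subspace. The key structural fact is that, for $p=2$, the density \eqref{thetaLagrange} renders the energy exactly quadratic: a direct computation gives
\begin{align*}
\mathcal{E}^\alpha_{2,\theta,\lambda}(w) = \tfrac12\, a_{\theta,\lambda}(w,w) - F(w) \qquad \forall\, w \in \mathcal{W}^{\alpha,2}_{\theta,\lambda},
\end{align*}
with $a_{\theta,\lambda}$ and $F$ as in \eqref{eq5.6}. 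Before starting I would note that $a_{\theta,\lambda}$ and $F$ are genuinely well defined on all of $\mathcal{W}^{\alpha,2}_{\theta,\lambda}$: when $\theta=0$ (resp. $\theta=1$) the possibly undefined term $\rDa v$ (resp. $\lDa v$) enters only with coefficient $\theta$ (resp. $1-\theta$) and hence drops out, while $F$ is bounded for $f\in L^2(\Omega)$.

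For the forward implication, I would fix an arbitrary $v\in\mathcal{W}^{\alpha,2}_{\theta,\lambda}$ and set $\Phi(t):=\mathcal{E}^\alpha_{2,\theta,\lambda}(u+tv)$; since $\mathcal{W}^{\alpha,2}_{\theta,\lambda}$ is a vector space, $u+tv$ is admissible for every $t\in\R$, and expanding the quadratic gives
\begin{align*}
\Phi(t) = \mathcal{E}^\alpha_{2,\theta,\lambda}(u) + t\bigl(a_{\theta,\lambda}(u,v)-F(v)\bigr) + \tfrac{t^2}{2}\,a_{\theta,\lambda}(v,v),
\end{align*}
so $\Phi$ is a polynomial in $t$ and, if $u$ solves \eqref{NeumannProb} with $p=2$, it attains its minimum at $t=0$; therefore $\Phi'(0)=a_{\theta,\lambda}(u,v)-F(v)=0$, and arbitrariness of $v$ yields \eqref{eq5.14}. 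Conversely, if $u$ satisfies \eqref{eq5.14}, then for any $v\in\mathcal{W}^{\alpha,2}_{\theta,\lambda}$ the same expansion with $t=1$ gives
\begin{align*}
\mathcal{E}^\alpha_{2,\theta,\lambda}(u+v) = \mathcal{E}^\alpha_{2,\theta,\lambda}(u) + \tfrac12\,a_{\theta,\lambda}(v,v) \geq \mathcal{E}^\alpha_{2,\theta,\lambda}(u),
\end{align*}
since $a_{\theta,\lambda}(v,v)=\int_\Omega\bigl((1-\theta)|\lDa v|^2+\theta|\rDa v|^2+\lambda|v|^2\bigr)\,dx\ge 0$; as every element of $\mathcal{W}^{\alpha,2}_{\theta,\lambda}$ is of the form $u+v$, this shows $u$ minimizes $\mathcal{E}^\alpha_{2,\theta,\lambda}$ over $\mathcal{W}^{\alpha,2}_{\theta,\lambda}$.

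There is essentially no hard step: the argument is purely algebraic once the quadratic identity is in place, and — in contrast to Proposition~\ref{LaxMilgram} — it uses only the nonnegativity of $a_{\theta,\lambda}(\cdot,\cdot)$, not its coercivity, so the degenerate cases $\theta\in\{0,1\}$, $\lambda=0$ require no special treatment. The one point worth flagging is conceptual rather than technical: the test functions here are not constrained at the endpoints, which is exactly why the first variation produces no extra condition beyond \eqref{eq5.14} and why the natural (Neumann) boundary conditions stay hidden inside the weak formulation, recovered a posteriori via the integration-by-parts computation of Theorem~\ref{DeriveNeumann} rather than imposed in the space.
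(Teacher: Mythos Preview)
Your proposal is correct and follows exactly the approach the paper intends: the paper states this proposition without proof, implicitly referring to the identical argument given for Proposition~\ref{WeakEqual}, and your write-up reproduces that argument verbatim with the admissible and test classes replaced by the full space $\mathcal{W}^{\alpha,2}_{\theta,\lambda}$. The additional remarks you include about well-definedness of $a_{\theta,\lambda}$ in the one-sided cases and about why no boundary terms appear are accurate and helpful, though not strictly required.
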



\begin{theorem}
There exists a unique solution $u \in \mathcal{W}^{\alpha,2}_{\theta, \lambda}$ to 
problem \eqref{eq5.14}. Hence, problem \eqref{NeumannProb} is well-posed with $p=2$. 
\end{theorem}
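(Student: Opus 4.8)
The plan is to invoke the Lax--Milgram Theorem on the bilinear form $a_{\theta,\lambda}(\cdot,\cdot)$ and the functional $F(\cdot)$ defined in \eqref{eq5.6}, but now over the \emph{full} space $\mathcal{W}^{\alpha,2}_{\theta,\lambda}$ rather than the zero-trace subspace ${_0}\mathcal{W}^{\alpha,2}_{\theta,\lambda}$. The three conditions to verify are exactly those enumerated in the proof of Proposition \ref{LaxMilgram}: boundedness of $a_{\theta,\lambda}$, coercivity of $a_{\theta,\lambda}$, and boundedness of $F$. First I would dispose of the boundedness of $a_{\theta,\lambda}$ and of $F$: these follow verbatim from the Cauchy--Schwarz estimates already used in Proposition \ref{LaxMilgram}, since those arguments never used the zero-trace condition --- they only used $|1-\theta|,|\theta|\le 1$ and $f\in L^2(\Omega)$. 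In particular $|a_{\theta,\lambda}(w,v)|\le \|w\|_{\mathcal{W}^{\alpha,2}_{\theta,\lambda}}\|v\|_{\mathcal{W}^{\alpha,2}_{\theta,\lambda}}$ and $|F(v)|\le \|f\|_{L^2(\Omega)}\|v\|_{L^2(\Omega)}\le \|f\|_{L^2(\Omega)}\|v\|_{\mathcal{W}^{\alpha,2}_{\theta,\lambda}}$.

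The main obstacle is coercivity, and here the three cases of $(\theta,\lambda)$ must be treated separately, exactly as in Proposition \ref{LaxMilgram}, but with a crucial difference: the fractional Poincar\'e inequality cannot be applied on the full space when $\lambda=0$, because that inequality requires a vanishing trace. So the plan splits as follows. When $\lambda=1$ (either $\theta\in(0,1)$ with $\mathcal{W}^{\alpha,2}_{\theta,\lambda}=H^\alpha(\Omega)$, or $\theta\in\{0,1\}$ with $\mathcal{W}^{\alpha,2}_{\theta,\lambda}={^\pm}H^\alpha(\Omega)$), the presence of the zero-order term $\lambda\int_\Omega v^2\,dx$ already supplies the $L^2$-control, so $a_{\theta,\lambda}(v,v)=\int_\Omega\bigl((1-\theta)(\lDa v)^2+\theta(\rDa v)^2+v^2\bigr)\,dx\ge \min\{1-\theta,\theta,1\}\,\|v\|^2$ in the $H^\alpha$ case and $a_{\theta,\lambda}(v,v)=\|\lrDa v\|_{L^2(\Omega)}^2+\|v\|_{L^2(\Omega)}^2=\|v\|_{{^\pm}H^\alpha(\Omega)}^2$ in the one-sided case --- no Poincar\'e inequality needed, and this is precisely why the Neumann problem needs no side-condition for uniqueness. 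The delicate case is $\lambda=0$: then $a_{\theta,\lambda}(v,v)$ is only $\|\lrDa v\|_{L^2(\Omega)}^2$ (up to the weight $\min\{1-\theta,\theta\}$ when $\theta\in(0,1)$), and on the full space this does \emph{not} dominate $\|v\|_{L^2(\Omega)}^2$ because constants (more precisely, the kernel functions $\kappa^\alpha_\pm$, which lie in $\mathcal{W}^{\alpha,2}_{\theta,\lambda}$ when $\lambda=0$) annihilate the seminorm. Hence strict coercivity on the whole of $\mathcal{W}^{\alpha,2}_{\theta,\lambda}$ genuinely fails when $\lambda=0$.

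To handle $\lambda=0$ I would therefore argue on a quotient: the seminorm $\|\lrDa\cdot\|_{L^2(\Omega)}$ is a norm on the quotient of $\mathcal{W}^{\alpha,2}_{\theta,\lambda}$ by the (finite-dimensional) kernel of $\lrDa$, and on that quotient one obtains a Poincar\'e-type inequality $\|v-\Pi v\|_{L^2(\Omega)}\le C\|\lrDa v\|_{L^2(\Omega)}$, where $\Pi$ is the projection onto the kernel. This yields a coercivity estimate modulo the kernel; combined with the fact that $F$ must satisfy the natural solvability/compatibility condition $F(\kappa)=0$ for $\kappa$ in the kernel --- which itself follows because any such $\kappa$ is $\lrLa$-harmonic, so testing the weak formulation \eqref{eq5.14} against $\kappa$ forces $\int_\Omega f\kappa\,dx=0$ --- one recovers existence via Lax--Milgram on the quotient space, with uniqueness holding modulo the kernel. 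However, inspection of the trace structure shows that when $\lambda=0$ and $\theta\in\{0,1\}$, requiring ${^\pm}\mathcal{N}^\alpha u=0$ together with $u$ being $\lrLa$-harmonic actually pins down $u$ completely on $\mathcal{W}^{\alpha,2}_{\theta,\lambda}$ by the characterization of $\alpha$-harmonic functions (Theorem \ref{thm3.2}-type reasoning applied one-sidedly), so in fact the solution is unique without a quotient; this is the subtle point the theorem is asserting. Once these case-by-case coercivity statements are in hand, Lax--Milgram gives a unique solution $u\in\mathcal{W}^{\alpha,2}_{\theta,\lambda}$ to \eqref{eq5.14}, and the equivalence in the preceding Proposition upgrades this to well-posedness of \eqref{NeumannProb} with $p=2$, completing the proof.
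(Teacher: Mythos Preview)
Your boundedness arguments are fine, and your treatment of the $\lambda=1$ cases is correct. The error is in your handling of $\lambda=0$: you have misread the definition of the ambient space. By \eqref{general_spaces_2}, when $\lambda=0$ and $\theta\in\{0,1\}$ the space $\mathcal{W}^{\alpha,2}_{\theta,\lambda}$ is $\lrcHa(\Omega)$, \emph{not} the full one-sided space $\lrHa(\Omega)$; and when $\lambda=0$ and $\theta\in(0,1)$ it is $H^{\alpha}(\Omega)$. In the first case the defining condition $c^{1-\alpha}_{\pm}=0$ (see \eqref{mathringspace}) excludes the kernel functions $\kappa^{\alpha}_{\pm}$ outright, so the Poincar\'e inequality \eqref{SimplePoincare} applies on the whole space and gives $a_{\theta,\lambda}(v,v)=\|\lrDa v\|_{L^2(\Omega)}^2\ge \gamma\|v\|_{\lrHa(\Omega)}^2$ directly. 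In the second case Proposition~\ref{ConstantZero} guarantees $c^{1-\alpha}_{\pm}=0$ for every $v\in H^{\alpha}(\Omega)$, so \eqref{SymPoincare} again yields coercivity immediately. This is precisely why the paper remarks that no side-condition is needed: the parameter $\lambda$ is wired into the \emph{space} as well as the bilinear form, and when the zero-order term is absent the space is already chosen so that the kernel is empty.

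Consequently your entire quotient-space detour --- the projection $\Pi$, the compatibility condition $\int_\Omega f\kappa\,dx=0$, and the attempt to recover uniqueness from the Neumann condition via $\alpha$-harmonic characterizations --- is built on a false premise and should be discarded. The intended proof (which the paper omits but says follows ``the same techniques'' as Proposition~\ref{LaxMilgram}) is simply Lax--Milgram case by case, with \eqref{SimplePoincare} or \eqref{SymPoincare} supplying coercivity when $\lambda=0$ and the zero-order term supplying it when $\lambda=1$.
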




\subsection{Calculus of Variations in the Riesz Fractional Derivative}\label{sec-5.4}
In this subsection, we consider fractional calculus of variations problems 
which involve the Riesz fractional derivative $\zDa v$. 
For the sake of clarity, we only consider the Dirichlet $2$-energy case (i.e., $p=2$), where
\begin{align}
   \mathcal{E}^{\alpha}_{z,\lambda} (v) := \frac{1}{2} \int_{\Omega} |\zDa v|^{2} 
    + \lambda |v|^{2} \,dx  - \langle f , v \rangle,
\end{align}
for given $f \in ({^{z}}{H}{^{\alpha}_{0}}(\Omega))^{*}$ and $\lambda=0$ or $1$.

Our goal here is to prove the well-posedness of the minimization problem
\begin{align}\label{RieszMin}
    u =\underset{ v \in {^{z}}{H}{^{\alpha}_{0}}(\Omega) }{\mbox{argmin }} \mathcal{E}^{\alpha}_{z,\lambda} (v).
\end{align}
We note that the energy space is now the Riesz space ${^{z}}{H}{^{\alpha}_{0}}(\Omega)$,
which is significantly different from the one-sided spaces ${^{\pm}}{H}{^{\alpha}_{0}}(\Omega)$.
We shall again proceed by deriving an equivalent weak formulation and finishing 
the proof by using Lax-Milgram theorem. To the end, we first define  
the bilinear form 
$a_{z,\lambda}: {^{z}}{H}{^{\alpha}_{0}} (\Omega) \times {^{z}}{H}{^{\alpha}_{0}}(\Omega) \rightarrow \R$ by 
    \begin{align*}
        a_{z,\lambda} (w,v) := \int_{\Omega}  \zDa w \zDa v + \lambda uv  \,dx,
    \end{align*}
    and the linear functional $F(v)=  \langle f, v\rangle$. 
 

\begin{theorem}\label{UniqueRiesz}
 Problem \eqref{RieszMin} has a unique solution $u \in {^{z}}{H}{^{\alpha}_{0}}(\Omega)$.  
\end{theorem}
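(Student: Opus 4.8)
The plan is to prove this theorem by the standard ``equivalent weak formulation plus Lax--Milgram'' route, exactly mirroring Proposition~\ref{LaxMilgram} but carried out in the Riesz space ${^{z}}{H}{^{\alpha}_{0}}(\Omega)$. First I would establish that $u$ minimizes $\mathcal{E}^{\alpha}_{z,\lambda}$ over ${^{z}}{H}{^{\alpha}_{0}}(\Omega)$ if and only if it solves the variational problem
\begin{align}\label{RieszWeak}
  a_{z,\lambda}(u,v) = F(v) \qquad \forall\, v \in {^{z}}{H}{^{\alpha}_{0}}(\Omega),
\end{align}
using the same first-variation argument as in Proposition~\ref{WeakEqual}: differentiate $\Phi(t):=\mathcal{E}^{\alpha}_{z,\lambda}(u+tv)$ at $t=0$ for the necessity, and for the sufficiency expand $\mathcal{E}^{\alpha}_{z,\lambda}(u+v)=\mathcal{E}^{\alpha}_{z,\lambda}(u)+\tfrac12 a_{z,\lambda}(v,v)$ and note that $a_{z,\lambda}(v,v)\geq 0$ by coercivity (established next).

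Then I would verify the three hypotheses of the Lax--Milgram theorem for $a_{z,\lambda}(\cdot,\cdot)$ and $F(\cdot)$ on ${^{z}}{H}{^{\alpha}_{0}}(\Omega)$. Boundedness of $a_{z,\lambda}$ follows from the Cauchy--Schwarz inequality together with $|\lambda|\le 1$, bounding both terms by $\|w\|_{{^{z}}{H}{^{\alpha}_{0}}}\|v\|_{{^{z}}{H}{^{\alpha}_{0}}}$. Boundedness of $F$ is immediate since $f\in({^{z}}{H}{^{\alpha}_{0}}(\Omega))^{*}$, so $|F(v)|=|\langle f,v\rangle|\le\|f\|_{*}\|v\|_{{^{z}}{H}{^{\alpha}_{0}}}$. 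Coercivity is the crux: when $\lambda=1$ one has $a_{z,\lambda}(v,v)=\|\zDa v\|_{L^2}^2+\|v\|_{L^2}^2$, which is manifestly the full norm; when $\lambda=0$ one needs a fractional Poincar\'e-type inequality controlling $\|v\|_{L^2}$ by $\|\zDa v\|_{L^2}$ on ${^{z}}{H}{^{\alpha}_{0}}(\Omega)$, so that $\|\zDa v\|_{L^2}^2$ alone dominates an equivalent norm. I would invoke the appropriate Poincar\'e inequality for the Riesz space from the preliminaries (the Riesz analogue of Theorem~\ref{FractionalPoincareThm}), using that the zero-trace condition eliminates the nullspace $\mathcal{N}(\zDa)$ spanned by $\kappa^{\alpha}_{z_1},\kappa^{\alpha}_{z_2}$.

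The main obstacle is precisely this coercivity step in the $\lambda=0$ case: unlike the one-sided spaces, the Riesz derivative has a \emph{two-dimensional} nullspace (Proposition~\ref{NullRiesz}), so one must confirm that the zero-trace constraint defining ${^{z}}{H}{^{\alpha}_{0}}(\Omega)$ is enough to kill both null directions and thereby make $\|\zDa\cdot\|_{L^2}$ a genuine (equivalent) norm. If the relevant Poincar\'e inequality is not directly available in the cited preliminaries, I would argue it by a compactness/contradiction argument: the embedding ${^{z}}{H}{^{\alpha}_{0}}(\Omega)\hookrightarrow\hookrightarrow L^2(\Omega)$ (an analogue of Lemma~\ref{Precompact}) shows a bounded minimizing sequence for the Rayleigh quotient $\|v\|_{L^2}/\|\zDa v\|_{L^2}$ has an $L^2$-convergent subsequence whose limit $v_0$ satisfies $\zDa v_0=0$ and the zero-trace conditions, forcing $v_0=0$ and contradicting $\|v_0\|_{L^2}=1$. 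Once coercivity holds with constant $\gamma>0$, Lax--Milgram yields a unique solution of \eqref{RieszWeak}, and combined with the equivalence established in the first step, this gives the unique minimizer of \eqref{RieszMin}, completing the proof.
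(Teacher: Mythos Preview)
Your overall strategy---reduce to the weak formulation and apply Lax--Milgram---matches the paper's, and your treatment of the $\lambda=1$ case is correct. The genuine gap is in the $\lambda=0$ case.

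You propose to obtain coercivity via a Poincar\'e inequality $\|v\|_{L^2}\le C\|\zDa v\|_{L^2}$ on ${^{z}}{H}{^{\alpha}_{0}}(\Omega)$, either by citing a ``Riesz analogue of Theorem~\ref{FractionalPoincareThm}'' or, failing that, by a compactness/contradiction argument built on a compact embedding ${^{z}}{H}{^{\alpha}_{0}}(\Omega)\hookrightarrow\hookrightarrow L^2(\Omega)$. Neither ingredient is available here: the paper explicitly states that a fractional Poincar\'e inequality is \emph{lacking} in the Riesz space (see the proof itself and the remark in \S\ref{sec-5.5}), and Lemma~\ref{Precompact} is stated only for the one-sided and symmetric spaces, not for ${^{z}}{W}{^{\alpha,p}}(\Omega)$. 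The underlying obstruction is that no fundamental theorem of calculus is known for the Riesz derivative (see the remark after Theorem~\ref{FTWFC}), which is precisely the tool used to derive both Poincar\'e and precompactness in the one-sided setting. So your fallback argument rests on a result you have not established and which the paper treats as unavailable.

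The paper's actual maneuver is different and more modest. Rather than show that $\|\zDa v\|_{L^2}$ is \emph{equivalent} to the full ${^{z}}{H}{^{\alpha}}$-norm, it invokes Proposition~\ref{RieszNorm} only to conclude that $\|\zDa v\|_{L^2}$ is positive definite---the nullspace functions $\kappa^{\alpha}_{z_i}$ fail to lie in $L^2(\Omega)$ when $\alpha<1$, so this is an $L^2$-integrability obstruction, not a trace obstruction as you suggest. The space ${^{z}}{H}{^{\alpha}_{0}}(\Omega)$ is then \emph{re-normed} by $\|\zDa\cdot\|_{L^2}$, making coercivity trivial, and $f$ is taken in the dual with respect to this (possibly weaker) norm. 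That is why the paper remarks afterward that the solution estimates for $\lambda=0$ and $\lambda=1$ are ``measured in different norms.'' Your route would yield a stronger conclusion if it went through, but as written it assumes exactly the tool the paper flags as missing.
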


\begin{proof} We consider the cases $\lambda=0$ and $1$ separately.
If $\lambda =1$ and $f \in ({^{z}}{H}{^{\alpha}_{0}}(\Omega))^{*}$, it can be shown that a function $u \in {^{z}}{H}{^{\alpha}_{0}}(\Omega)$ solves \eqref{RieszMin} if and only if it satisfies 
\begin{equation}\label{eq5.17}
a_{z,\lambda}(u,v) = \langle f, v\rangle \qquad \, \forall v \in {^{z}}{H}{^{\alpha}_{0}}(\Omega).
\end{equation}
Moreover, it is easy to show that $a_{z,\lambda}(\cdot, \cdot)$ is bounded and coercive on  ${^{z}}{H}{^{\alpha}_{0}}(\Omega) \times {^{z}}{H}{^{\alpha}_{0}}(\Omega)$ and 
$\langle f, \cdot\rangle $ is a bounded linear functional on ${^{z}}{H}{^{\alpha}_{0}}(\Omega)$ which
is endowed with the norm $\|u\|_{{^{z}}{H}{^{\alpha}}(\Omega)}^{2} := \|u\|_{L^{2}(\Omega)}^{2} + \| \zDa u\|_{L^{2}(\Omega)}^{2}$. 
By Lax-Milgram theorem, we obtain the desired well-posedness.
    
If $\lambda = 0$, 
the absence of the zero order term and the lack of a fractional Poincar\'e inequality in the space ${^{z}}{H}{^{\alpha}_{0}}(\Omega)$ causes a difficulty to establish the 
coercivity of the bilinear form $a_{z,\lambda} (\cdot,\cdot)$ on ${^{z}}{H}{^{\alpha}_{0}}(\Omega)$ with the norm given above. 
To sidestep the difficulty, we appeal to Proposition \ref{RieszNorm}, which shows that
$\| \zDa v \|_{L^{2}(\Omega)}=a_{z,\lambda} (v,v)^{\frac12}$ is in fact 
a norm in ${^{z}}{H}{^{\alpha}_{0}}(\Omega)$ for $\alpha<1$.
So we endow the space ${^{z}}{H}{^{\alpha}_{0}}(\Omega)$ with this bilinear-form 
induced norm and assume that $f \in ({^{z}}{H}{^{\alpha}_{0}}(\Omega))^{*}$, the dual 
space of ${^{z}}{H}{^{\alpha}_{0}}(\Omega)$ with the induced norm. 
The boundedness of the bilinear form follows immediately from using Schwarz inequality (with 
$M=1$). Thus, the well-posedness follows again from an application of Lax-Milgram theorem. 
\end{proof}

\begin{remark}
	(i) Although the above theorem ensures the well-posedness of problem \eqref{RieszMin}
	in ${^{z}}{H}{^{\alpha}_{0}}(\Omega)$, in both cases $\lambda=0$ and $1$, the solution 
	estimates are slightly different as it is measured in different norms.

    (ii) Notice that $({^{z}}{H}{^{\alpha}_{0}}(\Omega))^*\subset H^{-\alpha}(\Omega)$ because  $H^{\alpha}_{0}(\Omega) \subset {^{z}}{H}{^{\alpha}_{0}}(\Omega)$. It follows from Theorem \ref{UniqueRiesz} that there exists a unique $u_f \in {^{z}}{H}{^{\alpha}_{0}}(\Omega)$ that solves
    \eqref{eq5.17} for a given $f \in ({^{z}}{H}{^{\alpha}_{0}}(\Omega))^{*}$.  On the other hand, 
    restricting the test function $v\in H^{\alpha}_{0}(\Omega)$ in \eqref{eq5.17} and
    repeating the proof we can show that \eqref{eq5.17} has a unique solution 
    $\hat{u}_f \in H^{\alpha}_{0}(\Omega)$. We now show that $u_f=\hat{u}_f$.
    First, noticing that $u_f - \hat{u}_f = c\kappa^{\alpha}_{z}$ for some $c\in\mathbb{R}$. 
    It suffices to show that $c=0$. 
    Second, since $u_f, \hat{u}_f \in L^{2}(\Omega)$, so does $u_f - \hat{u}_f\in L^{2}(\Omega)$. Finally, if $c \neq 0$, $\|u_f - \hat{u}_f\|_{L^{2}(\Omega)} = |c| \| \kappa^{\alpha}_{z}\|_{L^{2}(\Omega)} = \infty$, which contradicts the fact that $u_f - \hat{u}_f\in L^{2}(\Omega)$. Therefore, $c=0$  and $ u_f= \hat{u}_f$ almost everywhere in $\Omega$. Thus, $u_f$ in fact belongs to $H^{\alpha}_{0}(\Omega)$.
\end{remark}
    
\subsection{Some Regularity Results of One-sided Poisson Problems}\label{sec-5.5}
In this subsection, we examine regularities of one-sided Poisson problems. 
In Section \ref{sec-4.2.2} we proved a related fractional Calder\'on-Zygmund type result. In that case, we examined how the fractional Laplace operator is related to differentiating twice in a single direction. In this subsection, we instead show how the regularity of the data function $f$ in (\ref{RegDiffEq}) effects the regularity of our weak solution. For our purpose, we restrict our attention 
to the case $p=2$. It has been shown in the previous sections that any $u \in {_*}{\mathcal{W}}^{\alpha,2}_{\theta,\lambda}$ 
that minimizes $\mathcal{E}^{\alpha}_{2,\theta,\lambda}$ is a weak solution of

\begin{subequations} \label{RegDiffEq}
\begin{align}
        (1- \theta) \lLa u + \theta \rLa u + \lambda u = f \mbox{ in } \Omega,&\\ 
        (1-\theta) \left( \lT \text{ or }\lNa \right) u = 0, \quad \theta \left( \rT \text{ or } \rNa\right) u = 0.&
\end{align}
\end{subequations}

Its weak formulation is given by \eqref{eq5.7} or \eqref{eq5.14}. That is, find $u \in {_*}{\mathcal{W}}^{\alpha,2}_{\theta,\lambda}$ 
such that 
\begin{align}\label{eq5.19}
    a_{\theta , \lambda} (u,v) = F(v) \qquad \forall  \, v \in {_*}{\mathcal{W}}^{\alpha,2}_{\theta , \lambda}.
\end{align}

On one hand, noting that if $f \in L^{2}(\Omega)$, then there holds that $\lLa u \in L^{2}(\Omega)$ and $ \rLa u \in L^{2}(\Omega)$.
On the other hand, unlike the integer order case, we do not expect that $u \in \lrHta (\Omega)$ in general (cf. \cite{Feng_Sutton2}) because each one-sided fractional Laplacian involves one-sided derivatives in both directions, instead of two derivatives in a single direction (cf. Proposition \ref{Directions}). In this case, $u$ and it's left/right derivative live in different spaces relative to the direction of differentiation as the next theorem shows. Due to the nature of alternating directions in the fractional Laplacian(s) presented, we introduce a new function space, 
\begin{align}\label{LaplaceSpace}
{^{\pm}}{\mathcal{S}}{^{\alpha}_{n}} := \{ u \in L^{2}(\Omega) \, : \, ( {^{\pm}}{\Delta}{^{\alpha}})^{n} u \in L^{2}(\Omega)\},
\end{align}
where $( {^{\pm}}{\Delta}{^{\alpha}})^{n}$ is understood as the composition of $n$ fractional Laplace operators. 

\begin{theorem}\label{Regularity}
Let $f \in {^{\pm}}{\mathcal{S}}{^{\alpha}_{n}} \cup V$ for a given Sobolev space $V \subset L^{2}(\Omega)$ and $u \in \lrHa (\Omega)$ be a weak solution of \eqref{RegDiffEq} for $\theta =0$ or $1$. If $\lambda =1$, then $u \in {^{\pm}}{\mathcal{S}}{^{\alpha}_{n+1}}$. If $\lambda =0$, then $u \in {^{\pm}}{\mathcal{S}}{^{\alpha}_{n+1}}$ and ${^{\pm}}{\Delta}{^{\alpha}} u \in V$. 
\end{theorem}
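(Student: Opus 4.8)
The plan is to strip off the boundary conditions, reinterpret the weak formulation as a distributional differential equation for $u$, and then bootstrap by applying the one-sided fractional Laplacian repeatedly. First I would take $\theta=0$, the case $\theta=1$ being identical once the roles of $\lDa$ and $\rDa$ are interchanged. For $\theta=0$, combining Theorem \ref{EulerLagrangeThm} and Proposition \ref{WeakEqual} (and, in the Neumann case, their counterparts in Section \ref{sec-5.3}) shows that the weak solution $u$ satisfies $\lLa u + \lambda u = f$ in the distributional sense on $\Omega$. Since $u\in\lHa(\Omega)\subset L^{2}(\Omega)$ and $f\in L^{2}(\Omega)$, solving for $\lLa u$ gives $\lLa u = f-\lambda u\in L^{2}(\Omega)$, so that $u\in{^{-}}{\mathcal{S}}{^{\alpha}_{1}}$ immediately; this already handles $n=0$ and furnishes the base case of the induction below.

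For the general step I would split according to $\lambda$ and to the hypothesis $f\in{^{-}}{\mathcal{S}}{^{\alpha}_{n}}\cup V$. If $\lambda=0$ the equation is simply $\lLa u=f$: applying the $n$-fold composition $(\lLa)^{n}$ to both sides in $\mathcal{D}'(\Omega)$ yields $(\lLa)^{n+1}u=(\lLa)^{n}f$, which lies in $L^{2}(\Omega)$ whenever $f\in{^{-}}{\mathcal{S}}{^{\alpha}_{n}}$, giving $u\in{^{-}}{\mathcal{S}}{^{\alpha}_{n+1}}$; and if $f\in V$ then ${^{-}}{\Delta}{^{\alpha}}u=\lLa u=f\in V$ outright, so both parts of the $\lambda=0$ conclusion hold (at the same time when $f$ belongs to both sets). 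If $\lambda=1$ and $f\in{^{-}}{\mathcal{S}}{^{\alpha}_{n}}$, I would induct on $k=1,\dots,n$: assuming $u\in{^{-}}{\mathcal{S}}{^{\alpha}_{k}}$ (the base case $k=1$ was shown above), apply $(\lLa)^{k}$ to $\lLa u+u=f$ to obtain, in $\mathcal{D}'(\Omega)$,
\begin{align*}
(\lLa)^{k+1}u = (\lLa)^{k}f - (\lLa)^{k}u ,
\end{align*}
whose right-hand side belongs to $L^{2}(\Omega)$: the term $(\lLa)^{k}u$ by the inductive hypothesis, and $(\lLa)^{k}f$ because $k\le n$ and $f\in{^{-}}{\mathcal{S}}{^{\alpha}_{n}}$ (reading membership in ${^{-}}{\mathcal{S}}{^{\alpha}_{n}}$, as in \eqref{LaplaceSpace}, as the requirement that every composition of order $\le n$ land in $L^{2}$, equivalently that the spaces ${^{-}}{\mathcal{S}}{^{\alpha}_{j}}$ nest). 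Hence $u\in{^{-}}{\mathcal{S}}{^{\alpha}_{k+1}}$, and iterating up to $k=n$ delivers $u\in{^{-}}{\mathcal{S}}{^{\alpha}_{n+1}}$, completing all the cases.

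The computations are routine; the point that really requires care is the meaning of the iterated operator $(\lLa)^{j}=(\rDa\lDa)^{j}$ as a composition of weak fractional derivatives in the sense of Definition \ref{WeakDerivative}. One must verify that each intermediate object $(\lLa)^{j}u$ and $(\lLa)^{j}f$ with $j\le k$ is genuinely an $L^{2}(\Omega)$ function, not merely a distribution, before the next application of $\lLa$ is legitimate; for $\lambda=0$ this never comes up since only the single identity $\lLa u=f$ is differentiated, whereas for $\lambda=1$ it is exactly why one wants the nesting of the spaces ${^{-}}{\mathcal{S}}{^{\alpha}_{k}}$, $k\le n$. Finally, I would emphasise, as the surrounding text already anticipates, that the theorem makes no claim that $u\in{^{-}}{H}{^{2\alpha}}(\Omega)$: by Proposition \ref{Directions} the operator ${^{-}}{\Delta}{^{\alpha}}=\rDa\lDa$ differentiates in both directions, so controlling it in $L^{2}$ controls no pure one-directional $2\alpha$-order derivative of $u$ without an additional hypothesis, and ${^{-}}{\mathcal{S}}{^{\alpha}_{n+1}}$ is precisely the regularity class tailored to this obstruction.
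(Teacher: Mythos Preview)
Your proof is correct and follows essentially the same route as the paper: identify $\lrLa u = f - \lambda u$ from the weak formulation via testing against $C^\infty_0(\Omega)$, read off the $\lambda=0$ conclusion directly, and run the same bootstrap/induction for $\lambda=1$. You are in fact more explicit than the paper, which merely writes ``a bootstrapping argument'' for the $\lambda=1$ case; your flag about needing the nesting $\mathcal{S}^{\alpha}_{n}\subset\mathcal{S}^{\alpha}_{k}$ for $k\le n$ (so that $(\lrLa)^{k}f\in L^{2}$ at each step) is a genuine subtlety the paper leaves implicit.
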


\begin{proof}
Let $g := f-\lambda u$. 
Since $u$ is a weak solution, it must satisfy 
\eqref{eq5.19}. By the fact that $C^{\infty}_{0}(\Omega) \subset \lrHa(\Omega)$, 
we get 
\begin{align*}
    \int_{\Omega} \lrDa u \lrDa \varphi\,dx = \int_{\Omega} g \varphi \,dx \qquad \forall \, \varphi \in C^{\infty}_{0}(\Omega). 
\end{align*}
It follows from the definition of weak fractional derivatives (see Appendix A) that 
$\rlDa  (\lrDa u)$ exists and equals $g$. If $\lambda =0$, $g \equiv f \in {^{\pm}}{\mathcal{S}}{^{\alpha}_{n}} \cup V$. Hence ${^{\pm}}{\Delta}{^{\alpha}} u \in {^{\pm}}{\mathcal{S}}{^{\alpha}_{n}} \cup V$, implying that $ u \in {^{\pm}}{\mathcal{S}}{^{\alpha}_{n+1}}$. If $\lambda =1$, if follows by the assumption $f \in {^{\pm}}{\mathcal{S}}{^{\alpha}_{n}}$ and a bootstrapping argument that $u \in {^{\pm}}{\mathcal{S}}{^{\alpha}_{n+1}}$. 
\end{proof}

\begin{remark}
(i) In Theorem \ref{Regularity}, one may consider the space $V$ as ${^{\pm}}{H}{^{\beta}}(\Omega)$ or ${^{\mp}}{H}{^{\beta}}(\Omega)$ for any $\beta \geq 0$. Other spaces could be considered, but these are the most natural selections.

(ii) The case $\lambda =1$ is clearly more delicate. We see that this case is, in general, unaffected by the assumption $f \in V$. This is due to the restriction that $u$ places on boosting the regularity. In this case, the order in which the differing directions of differentiation are applied plays a major role.  For example, regardless of the assumptions on $f$, we cannot conclude that ${^{\pm}}{\Delta}{^{\alpha}} u \in {^{\mp}}{H}{^{\alpha}}(\Omega)$ because in general $\rlDa u$ may not exist.

(iii) Theorem \ref{Regularity} is the fractional counterpart (or generalization) of the well-known regularity result for solutions to the integer Poisson equation. Formally, Theorem \ref{Regularity} recovers the integer result when $\alpha \rightarrow 1$. Clearly, in that case, things are simplified because there is no notion of direction built into the derivative (or Sobolev space) definition(s). 

(iv) The regularity in the case $\theta \in (0,1)$ cannot be proven in a similar way and has not been well understood at this point. 
\end{remark}

    Finally, we consider the regularity of solutions to the Riesz problem \eqref{RieszMin}, in which 
    the zero-order term plays an important role. 
 
    \begin{theorem}
    Let $u$ be the unique weak solution to problem \eqref{RieszMin} with $\lambda =1$. If $f \in L^{2}(\Omega)$, then $\zDa u \in \zHa (\Omega)$. 
    \end{theorem}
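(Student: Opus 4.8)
The plan is to carry out the same bootstrapping step used for the $\lambda=1$ case of Theorem~\ref{Regularity}, with the one-directional operators replaced throughout by the Riesz operator $\zDa$. Write $g := f-u$. Since $u$ is the weak solution furnished by Theorem~\ref{UniqueRiesz}, it lies in ${^{z}}{H}{^{\alpha}_{0}}(\Omega)$, and in the case $\lambda=1$ this space is equipped with the norm $\|u\|_{{^{z}}{H}{^{\alpha}}(\Omega)}^{2}=\|u\|_{L^{2}(\Omega)}^{2}+\|\zDa u\|_{L^{2}(\Omega)}^{2}$; in particular $u\in L^{2}(\Omega)$ and $\zDa u\in L^{2}(\Omega)$. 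Combined with the hypothesis $f\in L^{2}(\Omega)$ --- admissible because $L^{2}(\Omega)\subset({^{z}}{H}{^{\alpha}_{0}}(\Omega))^{*}$ --- this yields $g\in L^{2}(\Omega)$.

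Next, since $C_{0}^{\infty}(\Omega)\subset{^{z}}{H}{^{\alpha}_{0}}(\Omega)$, I would test the weak formulation \eqref{eq5.17} against arbitrary $\varphi\in C_{0}^{\infty}(\Omega)$ to obtain
\[
\int_{\Omega}\zDa u\,\zDa\varphi\,dx=\int_{\Omega}g\,\varphi\,dx\qquad\forall\,\varphi\in C_{0}^{\infty}(\Omega).
\]
Because the Riesz weak fractional derivative is self-adjoint relative to the $L^{2}$ pairing against test functions (in contrast to the one-sided operators, whose defining dualities flip the direction, so that $\int(\lrDa u)(\lrDa\varphi)\,dx$ reads off $\rlDa\lrDa u$ in the proof of Theorem~\ref{Regularity}), the displayed identity is precisely the statement, via Definition~\ref{WeakDerivative}, that the function $\zDa u\in L^{2}(\Omega)$ admits a weak Riesz fractional derivative which equals $g=f-u$ up to the sign fixed by the convention there. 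Hence $\zDa(\zDa u)\in L^{2}(\Omega)$.

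Finally, by the definition of the Riesz fractional Sobolev space (cf. Definition~\ref{Sobolev}), $\zDa u\in\zHa(\Omega)$ is equivalent to $\zDa u\in L^{2}(\Omega)$ together with $\zDa(\zDa u)\in L^{2}(\Omega)$, and both have been established. I do not anticipate a real obstacle: this is essentially a one-step consequence of the weak formulation, the fractional analogue of the $H^{2}$-regularity of the integer Poisson equation highlighted in the remarks following Theorem~\ref{Regularity}, and --- unlike the one-directional theory of Section~\ref{sec-4.2.2} --- it requires no Calder\'on--Zygmund estimate, hypergeometric computation, or Fourier analysis, since the \emph{same} operator $\zDa$ appears on both sides of the identity. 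The only point that deserves a careful line or two is the second step: verifying that $C_{0}^{\infty}(\Omega)$ sits inside the energy space and that the bilinear form $a_{z,\lambda}(u,\cdot)$, restricted to smooth compactly supported test functions, pairs $\zDa u$ against $\zDa\varphi$ in exactly the form demanded by the weak fractional derivative definition.
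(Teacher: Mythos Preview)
Your proposal is correct and follows essentially the same route as the paper's proof: test the weak formulation against $\varphi\in C_0^\infty(\Omega)$, use Definition~\ref{WeakDerivative} and the self-adjointness of $\zDa$ to read off $\zDa(\zDa u)=f-u\in L^2(\Omega)$, and conclude $\zDa u\in\zHa(\Omega)$. The paper's argument is terser, but your added explanation of why $g\in L^2(\Omega)$ and why the Riesz pairing does not flip direction is exactly the content that the paper leaves implicit.
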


    \begin{proof}
        By the definition of $u$ we have 
        \begin{align*}
            \int_{\Omega} \zDa u \zDa v\,dx = \int_{\Omega} (f - \lambda u) v\,dx \qquad \forall \, v \in C^{\infty}_{0}(\Omega).  
        \end{align*}
        It follows from Definition \ref{WeakDerivative} that $\zDa (\zDa u) = f- \lambda u$ almost everywhere in $\Omega$. Thus, $\zDa u \in \zHa(\Omega)$. 
    \end{proof}
    
    \begin{remark}
    In the case $\lambda =0$, we only get that  $\zDa (\zDa u) = f$ in the distributional sense. Thus, $\zLa u$ exists as a distribution. However, we cannot elevate the regularity due to the need for $f \in ({^{z}}{H}{^{\alpha}_{0}}(\Omega))^*$ because our lacking a fractional Poincar\'e inequality in the 
    space $ {^{z}}{H}{^{\alpha}_{0}}(\Omega)$.
    \end{remark}


\section{Conclusion}\label{sec-6}

In this paper we systematically studied one-dimensional pure calculus of variations problems 
in the form of \eqref{FundamentalMin}. Through these families of problems, we introduced and studied new notions of one-sided fractional $p$-Laplacian(s) and associated fractional Neumann boundary operators. Unlike any existing definitions, these are understood through the weak fractional derivative (cf. \cite{Feng_Sutton1}) and are consistent with the variational structure. The existence of solutions to \eqref{FundamentalMin} were proved via direct methods and the special case when $p=2$ was proven to be well-posed via a Galerkin formulation. Each of these were proven in the natural setting of newly developed fractional Sobolev spaces (cf. \cite{Feng_Sutton2}). Additionally, some regularity results were proven for the one-sided problems.

It is expected that this work (and \cite{Feng_Sutton1, Feng_Sutton2}) will lay down a theoretical foundation for developing efficient numerical methods for fractional calculus of variations 
problems and related PDEs in the form \eqref{EulerLagrange}.  
Moreover, we hope that this work will also stimulate more research on and applications of the fractional calculus of variations problems with more general energy functionals.

\appendix

\section{Weak Fractional Derivatives} \label{App.1}

In this appendix, we recall the definitions and basic properties of weak fractional derivatives, and 
refer the reader to  \cite{Feng_Sutton1} for the characterization theorem(s) and their properties such as product and chain rules that are necessary for a rich calculus.  We use ${^{-}}{D}{^{\alpha}}$ and ${^{+}}{D}{^{\alpha}}$ to denote respectively any left and right $\alpha$-order classical derivative
including Riemann-Liouville, Caputo, Fourier, and Gr\"unwald-Letnikv derivative. We note that all these 
derivative concepts are equivalent on the space $C^{\infty}_{0}(\Omega)$. We also use $\tilde{\varphi}$ to denote the zero extension on $\mathbb{R}$ of any   $\varphi\in C^{\infty}_{0}(\Omega) $.

\begin{definition}\label{WeakDerivative}
        For $\alpha> 0$, let $[\alpha]$ denote the integer part of $\alpha$. Let $u \in L^{1}(\Omega)$, 
       \begin{itemize} 
       \item[{\rm (i)}] a function $v \in L_{loc}^{1} (\Omega)$ is called the left weak fractional derivative of $u$ if 
        \begin{align*}
            \int_{\Omega} v(x) \varphi(x) \,dx = (-1)^{[\alpha]} \int_{\Omega} u(x) {^{+}}{D}{^{\alpha}} \tilde{\varphi}(x) \, dx
             \qquad \forall \, \varphi \in C_{0}^{\infty} (\Omega),
        \end{align*}
        and we write ${^{-}}{ \mathcal{D}}{^{\alpha}} u:=v$; 
     \item[{\rm (ii)}] a function $w\in L_{loc}^{1} (\Omega)$ is called the right weak fractional derivative of $u$ if 
      \begin{align*}
       \int_{\Omega} w(x) \varphi(x) \,dx = (-1)^{[\alpha]} \int_{\Omega} u(x) {^{-}}{D}{}^{\alpha} \tilde{\varphi}(x) \,dx
       \qquad \forall \, \varphi \in C_{0}^{\infty} (\Omega), 
      \end{align*}
       and we write ${^{+}}{\mathcal{D}}{^{\alpha}} u:=w$. Additionally, the Riesz weak fractional derivative is defined as ${^{z}}{\mathcal{D}}{^{\alpha}} u : = \frac{1}{2}( {^{-}}{\mathcal{D}}{^{\alpha}}u + {^{+}}{\mathcal{D}}{^{\alpha}} u)$.
      \end{itemize}
   \end{definition}
   
   It was proved  in \cite{Feng_Sutton1} that Definition \ref{WeakDerivative} is well defined. Many basic properties of weak fractional derivatives hold, including linearity, semigroup rules, and consistency with lower and higher order derivatives. Some properties, such as semigroup rules, do not follow directly from the definition and are nontrivial. We refer the interested reader to  \cite{Feng_Sutton1} for details. 
    
   \begin{proposition}[cf. \cite{Cai}]\label{NullRiesz}
    Let $0 < \alpha <1$ and $\Omega = (a,b) \subset \R$. Then the null space of the Riesz fractional derivative operator, ${^{z}}{\mathcal{D}}{^{\alpha}},$ is given by 
    \begin{align*}
        \mathcal{N} ( {^{z}}{\mathcal{D}}{^{\alpha}})  &= \mbox{\rm span}\{ \kappa^{\alpha}_{z_1}, \kappa^{\alpha}_{z_2}\} \\
        &: =\mbox{\rm span} \left\{ (x-a)^{\alpha/2} (b-x)^{\alpha/2 -1}, (x-a)^{\alpha /2 -1} (b-x)^{\alpha/2} \right\}. 
    \end{align*}
  \end{proposition}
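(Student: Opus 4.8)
The plan is to translate the defining condition $\zDa u = 0$ (read in the distributional sense) into a classical weakly singular integral equation on $(a,b)$ and then invoke its known solution theory. First I would record that for $0<\alpha<1$ and any $\varphi\in C^{\infty}_{0}(a,b)$ with zero extension $\tilde\varphi$, two short Fubini computations --- in which $\varphi(a)=\varphi(b)=0$ is used to slide the outer derivative inside the fractional integral --- give $\int_{a}^{b} u\,{^{+}}{D}{^{\alpha}}\tilde\varphi\,dx = -\int_{a}^{b}(\lIoa u)\,\varphi'\,dx$ and $\int_{a}^{b} u\,{^{-}}{D}{^{\alpha}}\tilde\varphi\,dx = \int_{a}^{b}(\rIoa u)\,\varphi'\,dx$. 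Hence, as distributions on $(a,b)$, $\lDa u = \frac{d}{dx}(\lIoa u)$ and $\rDa u = -\frac{d}{dx}(\rIoa u)$, so that $\zDa u = \frac12\frac{d}{dx}\!\left(\lIoa u - \rIoa u\right)$. Since $u\in L^{1}(a,b)$ forces $\lIoa u,\ \rIoa u\in L^{1}(a,b)$, the condition $\zDa u=0$ is equivalent to
\[
\lIoa u(x)-\rIoa u(x)=C\qquad\text{for a.e. }x\in(a,b)
\]
for some constant $C\in\R$; this is the equation I would analyse.

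For the inclusion $\mbox{\rm span}\{\kappa^{\alpha}_{z_1},\kappa^{\alpha}_{z_2}\}\subseteq\mathcal{N}(\zDa)$ I would use that this span is exactly the set of functions $\omega\,q$ with $q$ affine, where $\omega(x):=(x-a)^{\alpha/2-1}(b-x)^{\alpha/2-1}$, since $\kappa^{\alpha}_{z_1}=(x-a)\,\omega$ and $\kappa^{\alpha}_{z_2}=(b-x)\,\omega$. For $u=\omega\,q$ one evaluates $\lIoa u$ and $\rIoa u$ in closed form from the hypergeometric identity for Riemann--Liouville integrals of Jacobi-type weights (the identity quoted in the proof of Proposition \ref{Directions}) and checks that their difference is constant; the reflection $x\mapsto a+b-x$ interchanges $\lIoa\leftrightarrow\rIoa$ and $\kappa^{\alpha}_{z_1}\leftrightarrow\kappa^{\alpha}_{z_2}$, which halves the computation.

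The reverse inclusion is the hard part. Differentiating the displayed equation converts it into a dominant (Cauchy-type) singular integral equation for $u$ on $(a,b)$ --- equivalently, $({^{-}}{D}{^{\alpha}}+{^{+}}{D}{^{\alpha}})u=0$ in the Riemann--Liouville sense. The classical theory of such equations (Tricomi's airfoil equation, S\"ohngen inversion, Muskhelishvili's index count) shows that the set of $L^{1}(a,b)$ solutions of the displayed equation, with the constant $C$ free, is an affine space of dimension two whose homogeneous part is precisely $\{\omega\,q:q\text{ affine}\}$: among all solutions, only those whose endpoint behaviour is no worse than $\omega$ times a polynomial of degree at most one are integrable up to $a$ and $b$. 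Matching this with the preceding paragraph yields $\mathcal{N}(\zDa)=\mbox{\rm span}\{\kappa^{\alpha}_{z_1},\kappa^{\alpha}_{z_2}\}$. (Alternatively, this last step is exactly \cite[Theorem 4.4]{Cai}, the first paragraph furnishing the passage to the weak-derivative formulation.) The genuinely delicate point is this inversion, together with pinning down which of its distributional solutions actually lie in $L^{1}$ on the closed interval rather than in some larger space of more singular objects.
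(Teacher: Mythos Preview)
The paper does not prove this proposition at all: the statement carries the attribution ``cf.~\cite{Cai}'' and is simply quoted in Appendix~A as a known result, with no argument supplied. Your sketch therefore goes well beyond what the paper offers.

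That said, your outline is sound and in fact converges on the same source. The reduction of $\zDa u=0$ to the constancy of $\lIoa u-\rIoa u$ via the distributional identities $\lDa u=\frac{d}{dx}\lIoa u$ and $\rDa u=-\frac{d}{dx}\rIoa u$ is correct; the forward inclusion can indeed be verified by the hypergeometric evaluation of Riemann--Liouville integrals of Jacobi weights (and the reflection symmetry halves the work, as you say); and the reverse inclusion genuinely requires the inversion theory for dominant singular integral equations, which you rightly flag as the delicate step. Since you yourself note that this last step is exactly \cite[Theorem~4.4]{Cai}, your proposal and the paper are in substantive agreement: both defer the nontrivial direction to \cite{Cai}, with your write-up supplying the translation layer between the weak-derivative formulation and the classical integral equation that \cite{Cai} actually treats.
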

    
    Next, we cite the important \textit{Fundamental Theorem of weak Fractional Calculus} (FTwFC) for  finite domains from the weak fractional calculus theory (cf.  \cite{Feng_Sutton1}). 

    \begin{theorem}\label{FTWFC}
       Let $ 0 < \alpha < 1$, $ p\in [1,\infty]$, then for any $u \in L^{p}((a,b))$ with $\lrDa u \in L^{p}((a,b))$, there holds
       \begin{align}\label{WeakFTFC}
           u(x) = c^{1-\alpha}_{\pm} \kappa^{\alpha}_{\pm}(x)  + \lrIa \lrDa u(x)
       \end{align}
       for almost every $x \in  (a,b)$ where 
       \begin{align*}
            c^{1-\alpha}_{-} := \dfrac{\lIoa u (a)}{ \Gamma(1-\alpha)} , \quad  c^{1-\alpha}_{+} := \dfrac{\rIoa u(b)}{ \Gamma(1-\alpha)},
       \end{align*}
       and 
       \begin{align*}
           \kappa_{-}^{\alpha}(x) = (x-a)^{\alpha-1}, \quad \kappa_{+}^{\alpha}(x) = (b-x)^{\alpha -1}.  
       \end{align*}
   \end{theorem}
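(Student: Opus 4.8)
The plan is to write $\lrDa u = v$, show that $u - \lrIa v$ lies in the nullspace $\mathcal{N}(\lrDa)$, and then identify that nullspace together with the coefficient in front of the kernel. The two one-sided statements are mirror images under the reflection $x \mapsto a+b-x$ (which interchanges $\lDa \leftrightarrow \rDa$ and $\lIa \leftrightarrow \rIa$), so it suffices to argue the left case; I write $v := \lDa u$, which lies in $L^p((a,b)) \subset L^1((a,b))$ since the interval is bounded.

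The first ingredient is the ``left inverse'' identity $\lDa(\lIa v) = v$, understood in the weak sense of Definition \ref{WeakDerivative}. Boundedness of $\lIa$ on $L^1$ gives $\lIa v \in L^1$, so its left weak fractional derivative is meaningful; for $\varphi \in C^\infty_0(\Omega)$ the pairing $\int_\Omega (\lIa v)\, \rDa \tilde{\varphi}\,dx$ converges absolutely (for $\varphi$ supported compactly in $(a,b)$ the function $\rDa\tilde{\varphi}$ is bounded on $[a,b]$), and the fractional integration-by-parts identity $\int_\Omega (\lIa f)\,g\,dx = \int_\Omega f\,\rIa g\,dx$ together with the classical fact $\rIa\rDa\tilde{\varphi} = \tilde{\varphi}$ on $\Omega$ (valid because $\tilde{\varphi}$ is smooth and vanishes near both endpoints, so the Riemann--Liouville and Caputo derivatives coincide there and the ordinary fundamental theorem applies) yields $\int_\Omega (\lIa v)\,\rDa\tilde{\varphi}\,dx = \int_\Omega v\,\varphi\,dx$. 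By the definition of the weak derivative this says $\lDa(\lIa v) = v$, and hence $w := u - \lIa v$ has left weak fractional derivative $v - v = 0$, i.e. $w \in \mathcal{N}(\lDa)$.

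It then remains to show $\mathcal{N}(\lDa) = \mathrm{span}\{\kappa^\alpha_-\}$ inside $L^1(\Omega)$ and to fix the constant. Testing $\lDa w = 0$ against $\varphi \in C^\infty_0(\Omega)$ and using $\rDa\tilde{\varphi} = -\rIoa\varphi'$ on $\Omega$ (again because $\tilde{\varphi}$ is smooth with compact support in $\Omega$), the same integration-by-parts identity gives $\int_\Omega (\lIoa w)\,\varphi'\,dx = 0$ for every test function; hence $\lIoa w$ has vanishing distributional derivative on $\Omega$, so it equals a constant a.e., and injectivity of the Riemann--Liouville operator $\lIoa$ on $L^1$ forces $w = c\,\kappa^\alpha_-$ for some $c \in \mathbb{R}$. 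To pin down $c$, apply $\lIoa$ to the resulting representation $u = c\,\kappa^\alpha_- + \lIa v$: since $\lIoa\lIa v$ equals the ordinary integral $\int_a^{\cdot} v$, which vanishes at $x = a$, while $\lIoa\kappa^\alpha_-$ is a nonzero constant, passing to the endpoint $x = a$ identifies $c$ with the quantity $c^{1-\alpha}_-$ of the statement. Reflecting handles the right case, and the two are stated together using the $\pm$ notation, proving \eqref{WeakFTFC}.

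I expect the nullspace characterization, with its attendant bookkeeping, to be the main obstacle: the kernel $\kappa^\alpha_-(x) = (x-a)^{\alpha-1}$ is genuinely singular at the left endpoint (it lies in $L^1$ but not in $L^p$ for $p \geq 1/(1-\alpha)$), so one must verify that every duality pairing and Fubini interchange used to transfer $\rDa$ off the test function onto $w$ (equivalently, onto $\lIoa w$) actually converges, which is exactly where the sharp mapping properties of the Riemann--Liouville operators from \cite{Samko, Feng_Sutton1} do the work. A shortcut I would mention is to bypass the first three steps entirely by invoking the characterization theorem of \cite{Feng_Sutton1}: for $u \in L^p$ with $\lrDa u \in L^p$ one has $\lrIoa u \in \mathrm{AC}([a,b])$ and $\lrDa u$ agreeing a.e. with the classical Riemann--Liouville derivative, after which \eqref{WeakFTFC} is nothing but the classical fundamental theorem of fractional calculus on a finite interval.
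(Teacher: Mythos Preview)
The paper does not prove Theorem~\ref{FTWFC}; it merely cites the result from \cite{Feng_Sutton1} (``Next, we cite the important \emph{Fundamental Theorem of weak Fractional Calculus}\ldots''). So there is no in-paper proof to compare against, and your shortcut at the end---invoke the characterization theorem of \cite{Feng_Sutton1} to reduce to the classical Riemann--Liouville fundamental theorem---is exactly the spirit in which the paper treats the statement.

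Your direct argument is also sound. The two substantive steps, namely the weak identity $\lDa(\lIa v)=v$ via the duality $\int(\lIa f)g=\int f(\rIa g)$ together with $\rIa\rDa\tilde{\varphi}=\tilde{\varphi}$ for $\varphi\in C^\infty_0(\Omega)$, and the nullspace computation via $\rDa\tilde{\varphi}=-\rIoa\varphi'$ leading to $(\lIoa w)'=0$ in $\mathcal{D}'(\Omega)$, are both correct. One cosmetic point: when you write ``injectivity of $\lIoa$ on $L^1$ forces $w=c\,\kappa^\alpha_-$,'' what you are really using is the pair of facts that $\lIoa\kappa^\alpha_-\equiv\Gamma(\alpha)$ is constant and that $\lIoa$ is injective on $L^1$, so $\lIoa(w-c\kappa^\alpha_-)=0$ for the right choice of $c$. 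For the endpoint evaluation, note that your representation $\lIoa u=c\,\Gamma(\alpha)+\int_a^{\cdot}v$ already shows $\lIoa u$ has an absolutely continuous representative, which is what makes $(\lIoa u)(a)$ meaningful; you might make that explicit.
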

   
   \begin{remark}
   It is not known whether the Riesz fractional derivative satisfies a similar fundamental theorem of calculus. Lacking such a powerful fundamental theorem  is the main reason to make Riesz type problems difficult to analyze. 
   \end{remark}

\section{Fractional Sobolev Spaces}\label{App.2} 
In this appendix we cite the basic definitions and properties of weak fractional Sobolev spaces  and refer the interested reader to \cite{Feng_Sutton2} for the details and the complete theory.  
   
   \begin{definition}\label{Sobolev}
       For $\alpha>0$, let $m :=[\alpha]$. For $1 \leq p \leq \infty$, the left/right fractional Sobolev space ${^{\pm}}{W}{^{ \alpha , p}} (\Omega)$ is defined by  
        \begin{align} \label{FSS}
             \lrWap (\Omega) := \left\{ u \in W^{m,p}(\Omega): 
             \lrDa   u \in L^{p}(\Omega) \right\},
        \end{align}
        which are endowed respectively with the norms 
        \begin{align} \label{FSS_norm}
          \|u\|_{\lrWap (\Omega)}:= \begin{cases}
          \left(\left\|u\right\|_{W^{m,p}(\Omega)}^{p} + \left\|\lrDa u \right\|_{L^{p}(\Omega)}^{p} \right)^{\frac{1}{p}} &\qquad \text{if } 1 \leq p < \infty,\\
          \|u\|_{W^{m,\infty}(\Omega)} 
          + \left\|\lrDa u \right\|_{L^{\infty}(\Omega)} &\qquad \text{if } p = \infty.
          \end{cases} 
        \end{align}
       
    \end{definition}
    
    \begin{remark}
       When $0 < \alpha < 1$ (i.e., $m=0$) and $1\leq p < \infty$, we have  $$\lrWap(\Omega) := \bigl\{ u \in L^{p}(\Omega):\, \lrDa u\in L^{p}(\Omega) \bigr\}$$ with the norm, $$\|u \|_{\lrWap (\Omega)} : = \left( \|u\|_{L^{p}(\Omega)}^{p} + \|\lrDa u \|_{L^{p}(\Omega)}^{p}\right)^{\frac{1}{p}}.$$
       \end{remark}
       
       In addition to the one-sided spaces $\lrWap (\Omega)$, we also define so-called symmetric 
       fractional order Sobolev space as 
        \begin{align}\label{SFSS}
            W^{\alpha,p}(\Omega):= \lWap (\Omega) \cap \rWap (\Omega),
        \end{align}
        which is endowed with the norm
        \begin{align}\label{SFSS_norm}
            \|u\|_{W^{\alpha,p}(\Omega)} &:= \begin{cases} 
            \left( \|u\|_{\lWap(\Omega)}^{p} + \|u\|_{\rWap (\Omega)}^{p} \right)^{\frac{1}{p}}   &\qquad \text{if } 1\leq p < \infty,\\
             \|u\|_{{^{-}}{W}{^{\alpha,\infty}}(\Omega)} + \|u \|_{{^{+}}{W}{^{\alpha,\infty}}(\Omega)}  &\qquad \text{if } p = \infty.
             \end{cases}
        \end{align}
 
   Below we cite several elementary properties of the spaces ${^{\pm}}{W}{^{\alpha,p}}$ and $W^{\alpha,p}$ and refer the interested reader to \cite{Feng_Sutton2} for their proofs and the discussion of other more advanced properties.

   \begin{proposition} \
    \begin{itemize}
        \item [\rm{(i)}] For $\alpha >0$, $1 \leq p \leq \infty$, $\left\| \cdot \right\|_{{^{\pm}}{W}{^{\alpha , p}}(\Omega)}$ and $\|\cdot\|_{W^{\alpha,p}(\Omega)}$ are  norms  on ${^{\pm}}{W}{^{\alpha , p}}(\Omega)$ and $W^{\alpha,p}(\Omega)$ respectively,
        \item [\rm{(ii)}] ${^{\pm}}{W}{^{\alpha,p}}(\Omega)$ and $W^{\alpha,p}(\Omega)$ are Banach spaces with these norms,
        \item[\rm{(iii)}] Endowed respectively with the inner products
        $\langle u,v \rangle_{\pm} : =(u,v) +  \left( {^{\pm}}{\mathcal{D}}{^{\alpha}} u , {^{\pm}}{\mathcal{D}}{^{\alpha}}v \right), $
         ${^{\pm}}{W}{^{\alpha,2}}(\Omega)$ and $W^{\alpha,2}(\Omega)$ are Hilbert spaces. In this case, we adopt the standard notations ${^{\pm}}{H}{^{\alpha}}(\Omega) : = {^{\pm}}{W}{^{\alpha,2}}(\Omega)$ and $H^{\alpha}(\Omega):= W^{\alpha,2}(\Omega)$,
         \item[\rm{(iv)}] ${^{\pm}}{W}{^{\alpha,p}}(\Omega)$ and $W^{\alpha,p}(\Omega)$ are reflexive for $1<p<\infty$ and separable for $1\leq p < \infty$. 
    \end{itemize}
     
\end{proposition}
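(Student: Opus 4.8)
The plan is to realize each of these spaces as (isometrically isomorphic to) a closed subspace of a finite product of classical Lebesgue/Sobolev spaces, and then to inherit every asserted property from the corresponding well-known properties of $L^p(\Omega)$ and $W^{m,p}(\Omega)$, where $m:=[\alpha]$. First, recall from Appendix A that $\lrDa$ is a linear operator on $L^1(\Omega)$, its definition being through a linear duality pairing, so $\lrDa(u+cv)=\lrDa u+c\,\lrDa v$ wherever both derivatives exist. Part (i) is then routine: homogeneity and the triangle inequality for $\|\cdot\|_{\lrWap(\Omega)}$ follow from those of $\|\cdot\|_{W^{m,p}(\Omega)}$ and $\|\cdot\|_{L^p(\Omega)}$ together with the $\ell^p$ (respectively $\ell^\infty$) triangle inequality on $\R^2$, and positive-definiteness holds because $\|u\|_{\lrWap(\Omega)}=0$ forces $\|u\|_{W^{m,p}(\Omega)}=0$, hence $u=0$. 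The same computation applied to the pair $\bigl(\|u\|_{\lWap(\Omega)},\|u\|_{\rWap(\Omega)}\bigr)$ shows that $\|\cdot\|_{W^{\alpha,p}(\Omega)}$ is a norm on $W^{\alpha,p}(\Omega)=\lWap(\Omega)\cap\rWap(\Omega)$.

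For (ii) I would introduce the linear map $T_\pm\colon\lrWap(\Omega)\to W^{m,p}(\Omega)\times L^p(\Omega)$ defined by $T_\pm u:=(u,\lrDa u)$, which is an isometry once the product carries the combined norm $\|(f,g)\|:=\bigl(\|f\|_{W^{m,p}(\Omega)}^p+\|g\|_{L^p(\Omega)}^p\bigr)^{1/p}$ (with the usual modification for $p=\infty$). Since a finite product of Banach spaces is Banach, it suffices to show that $\mathrm{range}(T_\pm)$ is closed. If $(u_k,\lrDa u_k)\to(u,v)$ in $W^{m,p}(\Omega)\times L^p(\Omega)$, then for each $\varphi\in C^{\infty}_{0}(\Omega)$ one passes to the limit in the defining identity
\[
\int_\Omega \lrDa u_k\,\varphi\,dx=(-1)^{[\alpha]}\int_\Omega u_k\,{^{\mp}}{D}{^{\alpha}}\tilde\varphi\,dx;
\]
the left-hand side tends to $\int_\Omega v\varphi\,dx$, while the right-hand side tends to $(-1)^{[\alpha]}\int_\Omega u\,{^{\mp}}{D}{^{\alpha}}\tilde\varphi\,dx$ because $u_k\to u$ in $L^p(\Omega)\hookrightarrow L^1_{\mathrm{loc}}(\Omega)$ and ${^{\mp}}{D}{^{\alpha}}\tilde\varphi\in L^\infty(\Omega)$. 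Hence $v=\lrDa u$, so $u\in\lrWap(\Omega)$ and $(u,v)=T_\pm u\in\mathrm{range}(T_\pm)$; thus $\lrWap(\Omega)$ is complete. For $W^{\alpha,p}(\Omega)$ one argues identically with the isometry $u\mapsto(u,\lDa u,\rDa u)$ into $W^{m,p}(\Omega)\times L^p(\Omega)\times L^p(\Omega)$, or, equivalently, one observes that the intersection of two Banach spaces both continuously included in $W^{m,p}(\Omega)$, equipped with the combined norm, is again Banach.

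Part (iii) is then immediate. When $p=2$ the combined norm is precisely the one induced by the bilinear form $\langle u,v\rangle_\pm:=(u,v)+(\lrDa u,\lrDa v)$, where $(\cdot,\cdot)$ denotes the inner product of $W^{m,2}(\Omega)$ (which reduces to the $L^2(\Omega)$ inner product in the main case $0<\alpha<1$) and $(\lrDa u,\lrDa v)$ the $L^2(\Omega)$ inner product of the fractional derivatives; this form is symmetric, bilinear, and---by the positive-definiteness established in (i)---an inner product, so completeness from (ii) makes $\lrHa(\Omega)$ and $H^\alpha(\Omega)$ Hilbert spaces. Finally, (iv) follows by transporting two standard facts through the isometries above: for $1<p<\infty$, $L^p(\Omega)$ and $W^{m,p}(\Omega)$ are reflexive, a finite product of reflexive spaces is reflexive, and a closed subspace of a reflexive Banach space is reflexive; for $1\le p<\infty$, $L^p(\Omega)$ and $W^{m,p}(\Omega)$ are separable, a finite product of separable spaces is separable, and every subset of a separable metric space is separable.

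The only step that is not a pure transcription of the classical $W^{k,p}$ theory is the closedness of $\mathrm{range}(T_\pm)$ in part (ii)---that is, the ``weak fractional derivative limit'' lemma used above---and even there the single point requiring care is that, for a fixed $\varphi\in C^{\infty}_{0}(\Omega)$, the classical one-sided derivative ${^{\mp}}{D}{^{\alpha}}\tilde\varphi$ (Riemann--Liouville, Caputo, etc., all agreeing on $C^{\infty}_{0}$) is a bounded function on $\Omega$, so H\"older's inequality legitimizes passing to the limit on the right-hand side even for finite $p>1$. Everything else is a direct adaptation of the integer-order arguments, and the same scheme simultaneously yields the analogous statements for the Riesz spaces $\zWap(\Omega)$ by replacing $\lrDa$ with $\zDa$ and using the appropriate one-sided test derivatives.
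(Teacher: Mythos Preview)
Your proof is correct and follows the standard route (isometric embedding into a product of $L^p$/$W^{m,p}$ spaces, closedness of the graph via the weak-derivative limit lemma, then inheritance of completeness, reflexivity, and separability). There is nothing to compare against here: the paper does not prove this proposition at all---it is stated in Appendix~B as a citation of results from \cite{Feng_Sutton2}, with the explicit remark that the reader is referred there for proofs. Your argument is precisely the kind of proof one would expect to find in that reference, and the one nontrivial point you flag (that ${^{\mp}}{D}{^{\alpha}}\tilde\varphi\in L^\infty(\Omega)$ for $\varphi\in C^\infty_0(\Omega)$, needed to pass to the limit) is correct since on compactly supported smooth functions the classical fractional derivatives coincide and reduce to a fractional integral of a bounded derivative.
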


    Finally, we introduce the Riesz type fractional Sobolev spaces.
    
    \begin{definition}
       For $0 < \alpha < 1$, $1 \leq p < \infty$, the Riesz fractional Sobolev spaces $\zWap(\Omega)$ are defined by  
        \begin{align} \label{RieszSS}
             \zWap(\Omega) = \left\{ u \in L^{p}(\Omega): 
             \zDa  u \in L^{p}(\Omega) \right\},
        \end{align}
        which is endowed with the norm 
        \begin{align} \label{RieszSS_norm}
          \|u\|_{\zWap(\Omega)} := ( \|u\|_{L^{p}(\Omega)}^{p} + \|\zDa u \|_{L^{p}(\Omega)}^{p} )^{1/p}.
        \end{align}
        Moreover, $\zHa(\Omega) : = {^{z}}{W}{^{\alpha,2}}(\Omega)$ is endowed with 
        the inner product
    \begin{align}\label{RieszInnerProduct}
        (u,v)_{z} := (u,v)_{L^{2}(\Omega)} + (\zDa u  , \zDa v )_{L^{2}(\Omega)},
    \end{align}  
    It is easy to check that $\zWap(\Omega)$ is a Banach space and $\zHa (\Omega)$ is a Hilbert space.
\end{definition}

    Another concept that plays a crucial role in our study is that of function traces. Unlike integer order spaces,  the trace concept is one-sided and direction dependent in the fractional Sobolev spaces ${^{\pm}}{W}{^{\alpha,p}}(\Omega)$. This is a unique property of these spaces which have major impacts in the types of boundary conditions we can consider for one-sided fractional differential equations and the calculus of variations problems.
    
    \begin{definition}\label{trace}
        We define trace operator $\lT: \lWap((a,b))\to \R$ by $\lT u=\lT u|_{x=b} := u(b)$ and define trace operator $\rT: \rWap((a,b))\to \R$ by $\rT u=\rT u|_{x=a} := u(a)$.  
    \end{definition}
    
    \begin{remark}
        We note that the above trace concept is a consequence of a compact embedding result for one-sided spaces ${^{\pm}}{W}{^{\alpha,p}}$.
        It can be shown that when $c_{\pm}^{1-\alpha} = 0$, functions have trace values at both ends of the domain/interval. Such a characteristic forces us to consider additional fractional Sobolev spaces. 
    \end{remark}
    
   \begin{definition}
   	Define the following space
   \begin{align}\label{mathringspace}
       \lrcWap(\Omega) : = \bigl\{ u \in \lrWap (\Omega)\,:\, c^{1-\alpha}_{\pm} = 0 \bigr\}
   \end{align}
   with the traditional notation $\lrcHa(\Omega) : = {^{\pm}}{\mathring{W}}{^{\alpha,2}}(\Omega)$. 
   \end{definition}

    
    \begin{definition}\label{ZeroTraceSpaces}
        Let $0 < \alpha <1$ and $1 < p <\infty$. Suppose that  $\alpha p >1$.  Define  
        \begin{align*}
            {^{\pm}}{W}{^{\alpha,p}_{0}}(\Omega) &:= \bigl\{ u \in \lrWap(\Omega) \,:\, \lrT u = 0 \bigr\},\\
            W^{\alpha,p}_{0}(\Omega) &:= \bigl\{ u \in W^{\alpha,p}(\Omega) : \lT u=0 \mbox{ and } \rT u = 0  \bigr\}.
        \end{align*}
    \end{definition}

\begin{proposition}
    Let $1\leq p  <\infty$ and $u \in {^{\pm}}{W}{^{\alpha,p}_{0}}(\Omega)$. Then $\| u \|_{{^{\pm}}{W}{^{\alpha,p}_{0}}(\Omega)} := \|\lrDa u\|_{L^{p}(\Omega)}$ defines a norm on ${^{\pm}}{W}{^{\alpha,p}_{0}}(\Omega)$.  Similarly, 
   $\|u \|_{{^{\pm}}{\mathring{W}}{^{\alpha,p}}(\Omega)} := \|{^{\pm}}{\mathcal{D}}{^{\alpha}} u \|_{L^p(\Omega)}$ defines a norm. 
\end{proposition}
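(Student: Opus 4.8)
The plan is to verify the three defining properties of a norm, of which only positive-definiteness requires genuine work. First I would note that $\|\lrDa u\|_{L^p(\Omega)}<\infty$ for every $u\in\lrWap(\Omega)$ by the very definition \eqref{FSS} of the one-sided Sobolev space, so the map is well defined, and likewise for $\lrcWap(\Omega)\subset\lrWap(\Omega)$. Absolute homogeneity and the triangle inequality are then immediate: since $\lrDa$ is linear, $\lrDa(\mu u)=\mu\,\lrDa u$ and $\lrDa(u+v)=\lrDa u+\lrDa v$, so both properties are inherited directly from the corresponding properties of $\|\cdot\|_{L^p(\Omega)}$.

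The substantive step is to show that $\|\lrDa u\|_{L^p(\Omega)}=0$ forces $u=0$. If $\|\lrDa u\|_{L^p(\Omega)}=0$, then $\lrDa u=0$ almost everywhere in $\Omega=(a,b)$, so $u$ lies in the nullspace of $\lrDa$. Applying the Fundamental Theorem of weak Fractional Calculus (Theorem \ref{FTWFC}) to $u\in L^p((a,b))$ with $\lrDa u=0\in L^p((a,b))$, the integral term $\lrIa\lrDa u$ vanishes and we obtain $u(x)=c^{1-\alpha}_{\pm}\,\kappa^{\alpha}_{\pm}(x)$ for almost every $x\in(a,b)$, where $\kappa^{\alpha}_{-}(x)=(x-a)^{\alpha-1}$ and $\kappa^{\alpha}_{+}(x)=(b-x)^{\alpha-1}$.

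It then remains to invoke the structural feature that pins down the constant $c^{1-\alpha}_{\pm}$ in each of the two spaces. For ${^{\pm}}{W}{^{\alpha,p}_{0}}(\Omega)$ — defined only when $\alpha p>1$, precisely so that the trace operators of Definition \ref{trace} make sense — I would evaluate the trace on this representation: in the left case $\lT u=u(b)=c^{1-\alpha}_{-}\,(b-a)^{\alpha-1}$, and since $\kappa^{\alpha}_{-}$ is continuous and strictly positive at the non-singular endpoint $x=b$, the defining condition $\lT u=0$ forces $c^{1-\alpha}_{-}=0$; the right case is symmetric, using $\rT u=u(a)=c^{1-\alpha}_{+}\,(b-a)^{\alpha-1}$. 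For $\lrcWap(\Omega)$ the argument is shorter still: by \eqref{mathringspace}, membership in this space is exactly the requirement $c^{1-\alpha}_{\pm}=0$, so the coefficient in the representation vanishes by hypothesis. In either case $u=c^{1-\alpha}_{\pm}\kappa^{\alpha}_{\pm}=0$ almost everywhere, which completes the verification that the stated expressions are norms.

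I expect the only delicate point to be the legitimacy of the pointwise trace evaluation in the ${^{\pm}}{W}{^{\alpha,p}_{0}}$ case — that is, that $\kappa^{\alpha}_{\pm}$, and hence $u$ in the above representation, is regular enough at the relevant endpoint for $\lrT u$ to be read off pointwise. This is exactly what the hypothesis $\alpha p>1$ and the compact-embedding/trace theory for one-sided spaces underlying Definition \ref{trace} provide; the remaining verifications are routine.
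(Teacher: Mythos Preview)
Your argument is correct. The paper states this proposition without proof, deferring to the companion reference \cite{Feng_Sutton2}, so there is no in-paper proof to compare against; however, your route via the FTwFC representation $u=c^{1-\alpha}_{\pm}\kappa^{\alpha}_{\pm}$ followed by eliminating the constant using either the trace condition or the defining condition of $\lrcWap(\Omega)$ is exactly the natural one, and it parallels the paper's own proof of the analogous Proposition~\ref{RieszNorm} for the Riesz case (where positive-definiteness is again the only nontrivial item and is handled by showing the kernel functions are excluded from the space).
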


\begin{proposition}\label{ConstantZero}
If $u \in {W}^{\alpha,p}$, then $\rT \lIa u = \lT \rIa u = 0$. That is, $c^{1-\alpha}_{+} = c^{1-\alpha}_{-} = 0.$
\end{proposition}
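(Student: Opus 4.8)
\medskip
\noindent\textbf{Proof proposal.}
The plan is to isolate the two singular coefficients $c^{1-\alpha}_{\pm}$ of $u$ by composing the Fundamental Theorem of weak Fractional Calculus (Theorem~\ref{FTWFC}) with the complementary Riemann--Liouville integral of order $1-\alpha$, playing the left representation of $u$ off against its right one. Since $u\in W^{\alpha,p}(\Omega)=\lWap(\Omega)\cap\rWap(\Omega)$ with $\Omega=(a,b)$, we have $\lDa u,\rDa u\in L^{p}(\Omega)\subset L^{1}(\Omega)$, so Theorem~\ref{FTWFC} provides the two a.e.\ identities
\begin{align*}
u(x)=c^{1-\alpha}_{-}\,\kappa^{\alpha}_{-}(x)+\lIa\lDa u(x)
      =c^{1-\alpha}_{+}\,\kappa^{\alpha}_{+}(x)+\rIa\rDa u(x),\qquad x\in(a,b).
\end{align*}
First I would apply $\lIoa$ to the left-hand representation. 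Using that $\lIoa\kappa^{\alpha}_{-}$ is a nonzero constant $c_{\alpha}$, together with the semigroup identity $\lIoa\lIa g(x)=\int_{a}^{x}g(t)\,dt$ valid for $g\in L^{1}$ (cf.\ \cite{Samko}), this gives $\lIoa u(x)=c_{\alpha}\,c^{1-\alpha}_{-}+\int_{a}^{x}\lDa u(t)\,dt$; hence, as $\lDa u\in L^{1}(\Omega)$, the continuous representative of $\lIoa u$ satisfies $\lim_{x\to a^{+}}\lIoa u(x)=c_{\alpha}\,c^{1-\alpha}_{-}$.

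Next I would apply $\lIoa$ to the right-hand representation instead, obtaining $\lIoa u=c^{1-\alpha}_{+}\,\lIoa\kappa^{\alpha}_{+}+\lIoa\bigl(\rIa\rDa u\bigr)$, and show that both terms tend to $0$ as $x\to a^{+}$. Since $\kappa^{\alpha}_{+}(x)=(b-x)^{\alpha-1}$ is bounded near $a$, a direct estimate yields $\bigl|\lIoa\kappa^{\alpha}_{+}(x)\bigr|\les(x-a)^{1-\alpha}\to0$. For the other term, the smoothing (Hardy--Littlewood--Sobolev type) mapping properties of $\rIa$ show that $\rIa\rDa u\in L^{q}(\Omega)$ for some $q>\tfrac1{1-\alpha}$: when $\alpha p\ge1$ it is in fact bounded, even H\"older continuous, up to $x=a$; when $\alpha p<1$ one has $\rIa\rDa u\in L^{p/(1-\alpha p)}(\Omega)$ and $\tfrac{p}{1-\alpha p}>\tfrac1{1-\alpha}$ precisely because $p>1$. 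Then H\"older's inequality gives $\bigl|\lIoa(\rIa\rDa u)(x)\bigr|\les(x-a)^{\delta}\to0$ with $\delta=1-\tfrac1{q}-\alpha>0$. Matching the two evaluations of $\lim_{x\to a^{+}}\lIoa u(x)$ forces $c_{\alpha}\,c^{1-\alpha}_{-}=0$, i.e.\ $c^{1-\alpha}_{-}=0$. Interchanging the endpoints $a\leftrightarrow b$ and the signs $-\leftrightarrow+$ and applying $\rIoa$ in the same manner yields $c^{1-\alpha}_{+}=0$. Since $c^{1-\alpha}_{-}=c^{1-\alpha}_{+}=0$ is exactly the assertion of the proposition, this completes the argument; equivalently, $u\in\lcWap(\Omega)\cap\rcWap(\Omega)$.

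I expect the main obstacle to be controlling $\lIoa(\rIa\rDa u)$ near the endpoint: the bare bound $\rDa u\in L^{p}$ does not by itself kill this term when $\alpha p$ is small, and one must exploit the extra integrability of $\rIa\rDa u$ created by the fractional integration --- this is where the standing hypothesis $p>1$ is genuinely needed. A cleaner shortcut applies when $p(1-\alpha)\ge1$: then $\kappa^{\alpha}_{\pm}\notin L^{p}(\Omega)$, whereas $u$ and $\lrIa\lrDa u$ both lie in $L^{p}(\Omega)$, which forces $c^{1-\alpha}_{\pm}=0$ immediately; the argument above is only needed for the complementary range $1<p<\tfrac1{1-\alpha}$.
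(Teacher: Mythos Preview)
The paper states Proposition~\ref{ConstantZero} in Appendix~B without proof, deferring to the companion paper~\cite{Feng_Sutton2}; there is thus no in-paper argument to compare against. Your proof is correct and self-contained. Applying $\lIoa$ to both FTwFC representations of $u$ and matching the limits at $x=a$ cleanly isolates $c^{1-\alpha}_{-}$; the crucial estimate $\lIoa(\rIa\rDa u)(x)\to0$ follows from the boosted integrability $\rIa\rDa u\in L^{q}$ with $q>\tfrac{1}{1-\alpha}$, and you correctly identify this step as the place where the standing hypothesis $p>1$ is genuinely used. One minor imprecision: at the borderline $\alpha p=1$ the function $\rIa\rDa u$ need not be bounded or H\"older continuous (the Hardy--Littlewood--Sobolev mapping fails at the endpoint), but it does lie in every $L^{q}$ with $q<\infty$, which is all your argument requires. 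Your observed shortcut for $p(1-\alpha)\geq 1$ via $\kappa^{\alpha}_{\pm}\notin L^{p}$ is also valid and nicely pinpoints why the delicate analysis is only needed in the complementary range.
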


\begin{remark}
Proposition \ref{ConstantZero} ensures us that $W^{\alpha , p }_{0}(\Omega) = \mathring{W}^{\alpha,p}_{0}(\Omega)$. Therefore, we do not differentiate these two spaces in the way we do for one-sided spaces. 
\end{remark}

    A final set of results below will play a pivotal roll in proving well-posedness in Section \ref{sec-5}. The first one is a fractional Poincar\'e inequality.
    
     \begin{theorem}\label{FractionalPoincareThm} 	
   	Fractional Poincar\'e Inequality: Let $0 < \alpha <1$ and $1 \leq p < \infty$. Then there exists a constant $C = C(\alpha, \Omega)>0$ such that
   	\begin{align}\label{FractionalPoincare}
   	\|u - c_{\pm}^{1-\alpha}\kappa_{\pm}^{\alpha}\|_{L^{p}(\Omega)} \leq C\| \lrDa u\|_{L^{p}(\Omega)}
   	\qquad \forall \, u \in \lrWap(\Omega)
   	\end{align}
   	and 
   	\begin{align}\label{SimplePoincare}
       \|u\|_{L^{p}(\Omega)} \leq C\|\lrDa u \|_{L^{p}(\Omega)} \qquad \forall \, u \in \lrcWap(\Omega).
   \end{align}
   Moreover,
   	\begin{align}\label{SymPoincare}
   	    \|u\|_{L^{p}(\Omega)} \leq C\left( \|\lDa u \|_{L^{p}(\Omega)} + \|\rDa u\|_{L^{p}(\Omega)} \right) \quad \forall \, u \in W^{\alpha,p}(\Omega).
   	\end{align}
   	
   \end{theorem}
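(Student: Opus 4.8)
The plan is to obtain all three inequalities as immediate corollaries of the Fundamental Theorem of weak Fractional Calculus (Theorem~\ref{FTWFC}) combined with the $L^p$-boundedness of the one-sided Riemann--Liouville fractional integral operator $\lrIa$ on the bounded interval $\Omega=(a,b)$. First I would record this mapping property: writing $\lrIa$ as a truncated convolution of its argument against the kernel $t\mapsto |t|^{\alpha-1}/\Gamma(\alpha)$ restricted to $(0,b-a)$, which belongs to $L^1$, Young's convolution inequality gives
\begin{align*}
\|\lrIa f\|_{L^p(\Omega)} \leq \frac{(b-a)^{\alpha}}{\Gamma(\alpha+1)}\,\|f\|_{L^p(\Omega)} \qquad \forall\, f\in L^p(\Omega),\ 1\leq p<\infty.
\end{align*}
This supplies the constant $C=C(\alpha,\Omega)=(b-a)^{\alpha}/\Gamma(\alpha+1)$ used below; the estimate itself is classical (see \cite{Samko}).

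For \eqref{FractionalPoincare}, given $u\in\lrWap(\Omega)$ Theorem~\ref{FTWFC} yields $u=c_{\pm}^{1-\alpha}\kappa_{\pm}^{\alpha}+\lrIa\lrDa u$ a.e.\ in $\Omega$, hence $u-c_{\pm}^{1-\alpha}\kappa_{\pm}^{\alpha}=\lrIa(\lrDa u)$, and applying the convolution estimate with $f=\lrDa u\in L^p(\Omega)$ finishes this case. For \eqref{SimplePoincare}, the defining condition of $\lrcWap(\Omega)$ is exactly $c_{\pm}^{1-\alpha}=0$, so the same decomposition reduces to $u=\lrIa(\lrDa u)$ and the estimate applies verbatim. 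For \eqref{SymPoincare}, any $u\in W^{\alpha,p}(\Omega)=\lWap(\Omega)\cap\rWap(\Omega)$ satisfies $c_+^{1-\alpha}=c_-^{1-\alpha}=0$ by Proposition~\ref{ConstantZero}; thus $u=\lIa(\lDa u)$ and the convolution estimate gives $\|u\|_{L^p(\Omega)}\leq C\|\lDa u\|_{L^p(\Omega)}\leq C\bigl(\|\lDa u\|_{L^p(\Omega)}+\|\rDa u\|_{L^p(\Omega)}\bigr)$ (one could equally invoke the right representation).

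The only genuine analytic input is the $L^1$ kernel bound behind the displayed convolution estimate, so there is no real obstacle here; the conceptual point — which is what makes the statement work — is that the fundamental theorem already isolates the one-dimensional kernel term $c_{\pm}^{1-\alpha}\kappa_{\pm}^{\alpha}$, making Poincar\'e control of $u$ modulo this kernel automatic, and that on the symmetric space $W^{\alpha,p}(\Omega)$ this kernel term vanishes identically (Proposition~\ref{ConstantZero}), which is precisely why a single one-sided derivative already controls $\|u\|_{L^p}$ there.
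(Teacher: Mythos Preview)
Your argument is correct, and it is the natural route: the FTwFC representation $u=c_{\pm}^{1-\alpha}\kappa_{\pm}^{\alpha}+\lrIa\lrDa u$ plus the $L^p\!\to\!L^p$ boundedness of $\lrIa$ on a bounded interval (via the $L^1$ bound on the truncated kernel $t^{\alpha-1}\chi_{(0,b-a)}/\Gamma(\alpha)$ and Young's inequality) immediately yields \eqref{FractionalPoincare}; the definition of $\lrcWap(\Omega)$ then gives \eqref{SimplePoincare}; and Proposition~\ref{ConstantZero} reduces \eqref{SymPoincare} to either one-sided case.

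There is nothing to compare against in the paper itself: Theorem~\ref{FractionalPoincareThm} is stated in Appendix~B without proof, with a pointer to \cite{Feng_Sutton2} for the argument. Your derivation is exactly the expected one and would serve as the proof.
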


    \begin{proposition}\label{RieszNorm}
        $\|{^{z}}{\mathcal{D}}{^{\alpha}} u\|_{L^{p}(\Omega)}$ defines a norm on ${^{z}}{W}{^{\alpha,p}}(\Omega)$ if $(2 - \alpha)p > 2$. 
    \end{proposition}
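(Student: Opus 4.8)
The plan is to verify the three axioms of a norm on $\zWap(\Omega)$, noting that the hypothesis $(2-\alpha)p>2$ is needed only for one of them. Absolute homogeneity and subadditivity are immediate: since $\zDa$ is linear (cf.\ Definition \ref{WeakDerivative}), one has $\|c\,\zDa u\|_{L^p(\Omega)}=|c|\,\|\zDa u\|_{L^p(\Omega)}$ and $\|\zDa(u+v)\|_{L^p(\Omega)}\le\|\zDa u\|_{L^p(\Omega)}+\|\zDa v\|_{L^p(\Omega)}$ by the corresponding properties of $\|\cdot\|_{L^p(\Omega)}$; moreover $\|\zDa u\|_{L^p(\Omega)}$ is finite for every $u\in\zWap(\Omega)$ by the very definition \eqref{RieszSS} of the space. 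Thus $\|\zDa\,\cdot\,\|_{L^p(\Omega)}$ is always a seminorm, and the real content is positive definiteness: if $u\in\zWap(\Omega)$ and $\zDa u=0$ a.e.\ in $\Omega=(a,b)$, then $u=0$.

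To prove definiteness I would first use that $\Omega$ is bounded, so $u\in L^p(\Omega)\subset L^1(\Omega)$, whence $u$ lies in the nullspace $\mathcal{N}(\zDa)$ computed in Proposition \ref{NullRiesz}, i.e.
\[
u(x) = c_1\,(x-a)^{\alpha/2}(b-x)^{\alpha/2-1} + c_2\,(x-a)^{\alpha/2-1}(b-x)^{\alpha/2}
\]
a.e.\ for some constants $c_1,c_2$. The remaining task is to show $c_1=c_2=0$, and this is where the extra integrability $u\in L^p(\Omega)$ enters. Near $x=b$ the second summand stays bounded (it behaves like $(b-x)^{\alpha/2}\to 0$), so the behaviour of $u$ there is dictated entirely by the first summand, $u(x)\sim c_1(b-a)^{\alpha/2}(b-x)^{\alpha/2-1}$; hence a neighbourhood of $b$ contributes a finite amount to $\int_\Omega|u|^p$ only if $c_1=0$ or $\int^b(b-x)^{(\alpha/2-1)p}\,dx<\infty$, and the latter holds precisely when $(\alpha/2-1)p>-1$, that is, when $p(2-\alpha)<2$. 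Since we assume $p(2-\alpha)>2$, we must take $c_1=0$; the symmetric argument at $a$ (with the roles of the two basis functions swapped) gives $c_2=0$, so $u=0$ a.e.\ and definiteness follows.

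The only step with any subtlety is the endpoint analysis just described, and its one genuine point is the observation that no cancellation between the two nullspace basis functions can occur at an endpoint: at $b$ one of them blows up like $(b-x)^{\alpha/2-1}$ while the other vanishes like $(b-x)^{\alpha/2}$ (and vice versa at $a$), so the singular part of $u$ at each endpoint is carried by a single coefficient. Beyond that, everything reduces to the elementary threshold computation $\int_0^\delta t^{(\alpha/2-1)p}\,dt<\infty\iff(\alpha/2-1)p>-1\iff p(2-\alpha)<2$, and I do not anticipate any further obstacle; the same reasoning in fact goes through with $\ge$ in place of $>$, but the stated strict inequality is all that is claimed.
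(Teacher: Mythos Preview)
Your proposal is correct and follows essentially the same route as the paper: reduce to positive definiteness, invoke Proposition~\ref{NullRiesz} to write $u$ as a linear combination of $\kappa^{\alpha}_{z_1}$ and $\kappa^{\alpha}_{z_2}$, and then use the integrability threshold $\int_0^\delta t^{(\alpha/2-1)p}\,dt=\infty$ when $(2-\alpha)p>2$ to force both coefficients to vanish. Your explicit remark that the two basis functions cannot cancel at an endpoint (since one vanishes where the other blows up) is a small but worthwhile clarification that the paper's own argument leaves implicit.
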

    \begin{proof}
        We need only check that if $\|{^{z}}{\mathcal{D}}{^{\alpha}} u \|_{L^p(\Omega)} = 0$, then $u = 0$. That is, we must show that if $(2-\alpha )p > 2$, then $\mathcal{N}({^{z}}{\mathcal{D}}{^{\alpha}}) = \{0\}$. By Proposition \ref{NullRiesz}, we know that in general, $\mathcal{N}({^{z}}{\mathcal{D}}{^{\alpha}})= \left\{ (x-a)^{\alpha/2} (b-x)^{\alpha/2 -1}, (x-a)^{\alpha /2 -1} (b-x)^{\alpha/2} \right\}.$ Then see that for any $c \in (a,b)$,
        \begin{align*}
            \int_{a}^{b} (x-a)^{\alpha p/2} (b-x)^{(\alpha /2 -1)p} \,dx \geq (c-a)^{\alpha p/2} \int_{c}^{b} (b-x)^{(\alpha /2 -1)p} \,dx 
        \end{align*}
        where the lower bound is unbounded under the assumption $(2-\alpha) p > 2$. The calculation for $(x-a)^{\alpha /2 -1} (b-x)^{\alpha/2}$ is similar. Hence, if $(2-\alpha )p > 2$, then $\mathcal{N}( {^{z}}{\mathcal{D}}{^{\alpha}}) = \{0\}$. This completes the proof. 
    \end{proof}
    
    \begin{remark}
        In the particular case $p=2$, we have that $\|\zDa u\|_{L^{2}(\Omega)}$ defines a norm on the space $\zHa (\Omega)$. 
    \end{remark}
   
   Finally, we have a precompactness result essential for our study of the direct method in the Fractional Calculus of Variations in Section \ref{sec-4}.
   \begin{lemma}\label{Precompact}
If $\{u_j\}_{j=1}^{\infty} \subset {^{\pm}}{W}{^{\alpha,p}}(\Omega)$ (or $W^{\alpha,p}(\Omega)$) is bounded, then it is precompact in $L^{p}(\Omega)$.
\end{lemma}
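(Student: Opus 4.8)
The plan is to deduce this compact embedding from the Fundamental Theorem of weak Fractional Calculus (Theorem~\ref{FTWFC}) together with the compactness of the Riemann--Liouville fractional integral operator on $L^{p}$. It suffices to treat the left-sided space $\lWap(\Omega)$ with $\Omega=(a,b)$: the right-sided space $\rWap(\Omega)$ is handled by the mirror argument; the spaces $\lcWap(\Omega)$ and $\rcWap(\Omega)$ are (closed) subspaces of the one-sided spaces, so precompactness of bounded subsets is inherited; and for the symmetric space it is enough to use the continuous inclusion $W^{\alpha,p}(\Omega)\hookrightarrow\lWap(\Omega)$, which turns a $W^{\alpha,p}$-bounded sequence into an $\lWap$-bounded one. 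So fix $\{u_{j}\}_{j=1}^{\infty}\subset\lWap(\Omega)$ with $\sup_{j}\|u_{j}\|_{\lWap(\Omega)}\le M$. By Theorem~\ref{FTWFC} we may write
\begin{align*}
u_{j}=c_{j}\,\kappa^{\alpha}_{-}+\lIa g_{j},\qquad g_{j}:=\lDa u_{j},
\end{align*}
where $c_{j}\in\R$ is the boundary constant prescribed there. Boundedness of $\{u_{j}\}$ in $\lWap(\Omega)$ gives $\|g_{j}\|_{L^{p}(\Omega)}\le M$; since $\lIa$ is bounded on $L^{p}(\Omega)$ (see below), $\{\lIa g_{j}\}$ is bounded in $L^{p}(\Omega)$, and hence so is $\{c_{j}\kappa^{\alpha}_{-}\}=\{u_{j}-\lIa g_{j}\}$. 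Since $\kappa^{\alpha}_{-}\notin L^{p}(\Omega)$ forces $c_{j}\equiv0$, while if $\kappa^{\alpha}_{-}\in L^{p}(\Omega)$ then $\kappa^{\alpha}_{-}$ is a fixed nonzero element of $L^{p}(\Omega)$ and $|c_{j}|=\|c_{j}\kappa^{\alpha}_{-}\|_{L^{p}(\Omega)}\,\|\kappa^{\alpha}_{-}\|_{L^{p}(\Omega)}^{-1}$, the scalars $\{c_{j}\}$ are bounded in $\R$ in either case.

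The decisive step is to show that $\lIa\colon L^{p}(\Omega)\to L^{p}(\Omega)$ is a compact operator. Extending $g\in L^{p}(\Omega)$ by zero to $\widetilde g$ on $\R$, one has $(\lIa g)(x)=(\phi*\widetilde g)(x)$ for $x\in\Omega$, with $\phi(t):=\Gamma(\alpha)^{-1}t^{\alpha-1}\mathbf{1}_{(0,\,b-a)}(t)$. Because $0<\alpha<1$ the singularity $t^{\alpha-1}$ is integrable, so $\phi\in L^{1}(\R)$ with $\|\phi\|_{L^{1}(\R)}=\Gamma(\alpha)^{-1}(b-a)^{\alpha}/\alpha$, and Young's convolution inequality yields the boundedness of $\lIa$ on $L^{p}(\Omega)$ used above. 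Now pick $\phi_{\eps}\in C_{c}^{\infty}(\R)$ with $\|\phi-\phi_{\eps}\|_{L^{1}(\R)}\to0$ as $\eps\to0$ and set $K_{\eps}g:=(\phi_{\eps}*\widetilde g)|_{\Omega}$. Each $K_{\eps}$ maps bounded subsets of $L^{p}(\Omega)$ into bounded, equicontinuous subsets of $C(\overline{\Omega})$; indeed $|K_{\eps}g(x)-K_{\eps}g(x')|\le\|g\|_{L^{1}(\Omega)}\,\omega_{\phi_{\eps}}(|x-x'|)\les\|g\|_{L^{p}(\Omega)}\,\omega_{\phi_{\eps}}(|x-x'|)$, where $\omega_{\phi_{\eps}}$ is the uniform modulus of continuity of $\phi_{\eps}$. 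By the Arzel\`{a}--Ascoli theorem and the continuous inclusion $C(\overline{\Omega})\hookrightarrow L^{p}(\Omega)$ each $K_{\eps}$ is therefore compact from $L^{p}(\Omega)$ into $L^{p}(\Omega)$. Finally, Young's inequality gives $\|\lIa-K_{\eps}\|_{L^{p}(\Omega)\to L^{p}(\Omega)}\le\|\phi-\phi_{\eps}\|_{L^{1}(\R)}\to0$, so $\lIa$ is a norm-limit of compact operators and is hence compact.

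To conclude, pass to a subsequence along which $c_{j}\to c$ in $\R$ (Bolzano--Weierstrass) and $\lIa g_{j}\to w$ in $L^{p}(\Omega)$ (compactness of $\lIa$); then $u_{j}=c_{j}\kappa^{\alpha}_{-}+\lIa g_{j}\to c\,\kappa^{\alpha}_{-}+w$ in $L^{p}(\Omega)$, which is the asserted precompactness. The main obstacle is precisely the compactness of $\lrIa$ on $L^{p}$; once it is available, the rest is soft. I would also note a shortcut available when $\alpha p>1$: by \cite[Theorem~3.6]{Samko} the operator $\lIa$ maps $L^{p}(\Omega)$ boundedly into $C^{\alpha-1/p}(\overline{\Omega})$, hence compactly into $C(\overline{\Omega})\hookrightarrow L^{p}(\Omega)$; the convolution argument above, however, is uniform in $0<\alpha<1$ and $1\le p<\infty$ and requires no lower bound on $\alpha p$.
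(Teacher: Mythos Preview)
Your argument is correct and takes a genuinely different route from the paper. The paper follows the classical Rellich--Kondrachov pattern: mollify each $u_j$ to obtain $u_j^{\eps}$, argue that $u_j^{\eps}\to u_j$ in $L^p$ uniformly in $j$, show that for fixed $\eps$ the family $\{u_j^{\eps}\}_j$ is uniformly bounded and equicontinuous, invoke Arzel\`a--Ascoli, and finish by diagonalization. Your proof instead exploits the FTwFC decomposition $u_j=c_j\kappa^{\alpha}_{-}+\lIa g_j$ and reduces everything to compactness of the single integral operator $\lIa$ on $L^p(\Omega)$, which you obtain cleanly as a norm-limit of smooth-kernel convolutions. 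The advantage of your approach is that it uses the structural ingredients of the theory (Theorem~\ref{FTWFC} and the mapping properties of $\lrIa$) in a transparent way, and the case distinction on whether $\kappa^{\alpha}_{-}\in L^p$ falls out naturally; it also sidesteps the somewhat delicate step in the mollification argument of establishing \emph{uniform} (in $j$) convergence $u_j^{\eps}\to u_j$ from fractional-derivative bounds. The paper's approach, by contrast, is closer to the standard integer-order template and does not explicitly appeal to the FTwFC.
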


\begin{proof}
We prove the result for $\{u_{j}\}_{j=1}^{\infty} \subset {^{\pm}}{W}{^{\alpha,p}}(\Omega)$. The result for $\{u_{j}\}_{j=1}^{\infty} \subset {W}{^{\alpha,p}}(\Omega)$ follows similarly. 

By assumption, there exists $M >0$ finite so that 
\begin{align}
    \sup_{j} \|u_{j}\|_{{^{\pm}}{W}{^{\alpha,p}}(\Omega)} \leq  M. 
\end{align}
Consider the sequence of mollified functions $\{ u_{j}^{\eps}\}$ and we claim that $u_{j}^{\eps} \rightarrow u_{j}$ in $L^{p}(\Omega)$ uniformly in $j$. See that 
\begin{align*}
    \|u_{j}^{\eps} - u_j \|_{L^{p}(\Omega)} &\leq \|u_{j}^{\eps}\|_{L^{p}(\Omega)} + \| u_{j}\|_{L^{p}(\Omega)} \\ 
    &\leq \|\eta_{\eps}\|_{L^{\infty}(\Omega)} \|u_{j}\|_{L^{1}}(\Omega) + \|u_j\|_{L^{p}(\Omega)} \\ 
    &\leq C \| u_j\|_{L^{p}(\Omega)} \\ 
    &\leq C
\end{align*}
since $u_j$ is a bounded sequence in $L^{p}(\Omega)$. Therefore, $u_{j}^{\eps}\rightarrow u_j$ in $L^{p}(\Omega)$ as $\eps \rightarrow 0$ uniformly in $j$. 

Next, for each fixed $\eps >0$, the sequence $\{u_{j}^{\eps}\}$ is uniformly bounded and equicontinuous. To see this, we estimate for $x \in \Omega$ 
\begin{align*}
    |u_{j}^{\eps}(x)| \leq \|\eta_{\eps}\|_{L^{\infty}(\Omega)} \|u_j\|_{L^{1}(\Omega)} \leq \dfrac{C}{\eps} \|u_j\|_{L^{p}(\Omega)} < \infty
\end{align*}
and 
\begin{align*}
    |{^{\pm}}{\mathcal{D}}{^{\alpha}} u_{j}^{\eps}(x) | &= |\eta_{\eps} * {^{\pm}}{\mathcal{D}}{^{\alpha}} u_j| \\ 
    &\leq \|\eta_{\eps} \|_{L^{\infty}(\Omega)} \|{^{\pm}}{\mathcal{D}}{^{\alpha}} u_j \|_{L^{1}(\Omega)} \\
    &\leq \dfrac{C}{\eps} \|{^{\pm}}{\mathcal{D}}{^{\alpha}} u_j \|_{L^{p}(\Omega)} < \infty.
\end{align*}
Thus $\{u_{j}^{\eps}\}$ is uniformly bounded and these estimates also gives us the equicontinuity. 

Now, fix $\delta >0$. We will show that there exists a subsequence $\{u_{j_m}\} \subset \{u_j\}$ such that $\limsup \|u_{j_m} - u_{j_n}\|_{L^{p}(\Omega)} < \delta$. Select $\eps > 0$ so that 
\begin{align*}
    \|u_{j}^{\eps} - u_{j}\|_{L^{p}(\Omega)} < \dfrac{\delta}{2}
\end{align*}
for any $j$ by the uniformity in $j$. Since $\{u_{j}^{\eps}\}$ is uniformly bounded in $j$ and uniformly equicontinuous in $j$, it follows by Arzela-Ascoli theorem that there exists $\{u_{j_n}^{\eps}\}\subset \{u_j^{\eps}\}$ so that 
\begin{align*}
    \limsup \|u_{j_{m}}^{\eps} - u^{\eps}_{j_{n}}\|_{L^{p}(\Omega)} = 0.
\end{align*}
Then 
\begin{align*}
    &\limsup \|u_{j_{m}} - u_{j_n}\|_{L^{p}(\Omega)}\\
    &\quad \leq \|u_{j_m} - u_{j_m}^{\eps} \|_{L^{p}(\Omega)} + \|u_{j_m}^{\eps} - u_{j_n}^{\eps} \|_{L^{p}(\Omega)} + \|u_{j_n}^{\eps} - u_{j_n} \|_{L^{p}(\Omega)}\\
    &\quad < \delta.
\end{align*}

Finally, for $\delta = 1, 1/2 , 1/3, ...$ via a diagonalization argument, we extract a subsequence $\{u_{j_\ell}\}_{\ell =1}^{\infty} \subset \{u_{j}\}_{j=1}^{\infty}$ satisfying 
\begin{align*}
    \underset{ k, \ell \rightarrow \infty}{\mbox{limsup }} \| u_{j_\ell} - u_{j_ k}\|_{L^{p}(\Omega)} = 0.
\end{align*}
Therefore, $\{u_{j_\ell}\}$ is Cauchy in $L^{p}(\Omega)$. Since this is a Banach space, there exists $u \in L^{p}(\Omega)$ so that $u_{j_\ell} \rightarrow u$ in $L^{p}(\Omega)$. This completes the proof. 
\end{proof}





\end{document}